\DeclareMathOperator{\vol}{vol}
\definecolor{trp}{rgb}{1,1,1}
\definecolor{red}{rgb}{1,0,.2}
\definecolor{blue}{rgb}{0,0,1}
\definecolor{rgrey}{rgb}{.8,0.4,.4}  
\definecolor{grey}{rgb}{.13,.13,.13}  
\definecolor{green}{rgb}{0.0,0.4,0.2}
\newtheorem{theorem}{Theorem}
\theoremstyle{plain}
\newtheorem{corollary}[theorem]{Corollary}
\newtheorem{definition}[theorem]{Definition}
\newtheorem{fact}[theorem]{Fact}
\newtheorem{lemma}[theorem]{Lemma}
\newtheorem{proposition}[theorem]{Proposition}
\newtheorem{remark}[theorem]{Remark}
\numberwithin{equation}{section}
\newcommand*{\e}[1]{\text{e}^{#1}}
\begin{document}

\parindent0pt

\title[Fractal percolations]
{The dimension of projections of fractal percolations}

\author{Micha\l\ Rams}
\address{Micha\l\ Rams, Institute of Mathematics, Polish Academy of Sciences, ul. \'Sniadeckich 8, 00-956 Warsaw, Poland
\tt{rams@impan.gov.pl}}

\author{K\'{a}roly Simon}
\address{K\'{a}roly Simon, Institute of Mathematics, Technical
University of Budapest, H-1529 B.O.box 91, Hungary
\tt{simonk@math.bme.hu}}

 \thanks{2000 {\em Mathematics Subject Classification.} Primary
28A80 Secondary 60J80, 60J85
\\ \indent
{\em Key words and phrases.} Random fractals, Hausdorff dimension,
 processes in random environment.\\
\indent Rams was partially supported by the MNiSW grant N201 607640 (Poland).
 The research of Simon was supported by OTKA Foundation
\# K 104745}

\begin{abstract}\emph{Fractal percolation} or \emph{Mandelbrot percolation} is
one of the most well studied families of random fractals.
 In this paper we study some of the  geometric measure theoretical properties (dimension of projections and structure of slices)  of these random sets.
Although random, the geometry of those sets is quite regular.
Our results imply that, denoting by $E\subset \mathbb{R}^2$ a typical realization of the fractal percolation on the plane,
\begin{itemize}
  \item If $\dim_{\rm H}E<1$ then for \textbf{all }lines $\ell$ the orthogonal projection $E_\ell$ of $E$ to $\ell$ has the  same Hausdorff dimension as $E$,
  \item If $\dim_{\rm H}E>1$ then for  any smooth real valued  function $f$ which is strictly increasing in both coordinates, the image $f(E)$ contains an interval.
\end{itemize}
The second statement is quite interesting considering the fact that $E$ is almost surely a Cantor set (a {\it random dust}) for a large part of the parameter domain, see \cite{Chayes1988}.
Finally, we solve a related problem about the existence of an interval in the algebraic sum  of  $d\geq 2$ one-dimensional fractal percolations.

\end{abstract}

\maketitle

\medskip

\section{introduction}
To model turbulence,
Mandelbrot \cite{Mandelbrot1974}, \cite{Mandelbrot1983} introduced a  family of statistically self-similar random sets $E$  which is now called  fractal percolation or Mandelbrot percolation.
This is a two-parameter $(M,p)$ family of random sets in $\mathbb{R}^d$, where $M\geq 2$ is an integer and $0<p<1$ is a probability. The inductive construction of $E$ is as follows.
The (closed) unit cube of $\mathbb{R}^d$ is divided into
$M^d$ congruent cubes. Each of them are retained with probability $p$ and discarded with probability $1-p$. In the retained cubes we repeat this division and retaining/discarding process independently of everything at infinitum or until there are no retained cubes left. The random set  $E$ that remains after infinitely many steps (formally: the intersection of the unions of retained cubes on all stages of the construction) is the fractal percolation set.
See Section \ref{154} for a more detailed description.

The number of retained cubes of level $n$  forms a branching process with   offspring distribution $\texttt{Binomial}(M^d,p)$, which will be our standing assumption.
So, $E\ne\emptyset $ with positive probability iff  $p>1/M^d$.
An interesting phenomenon appears in $d \ge 2$
when the opposite walls of the unit square are connected in $E$, it is called percolation.
Chayes, Chayes and Durrett \cite{Chayes1988} proved that
this happens with positive probability when $p>p_{\mathrm{crit}}$ a critical probability. We do not know the precise value  of $p_{\mathrm{crit}}$ but it was proved in \cite{Chayes1988} that
$p_{\mathrm{crit}}<1$. Further, it was also proved in \cite{Chayes1988} that
for $p<p_{\mathrm{crit}}$ the random set $E$ is totally disconnected (a dust)
almost surely conditioned on $E\ne\emptyset $.

We consider first the fractal percolations on the plane: $d=2$. We study the projections and slices of $E$. In particular, we point out (Theorem \ref{u1}) that for \emph{\textbf{all}} lines $\ell$ we have
\begin{equation}\label{z3}
\dim_{\rm H}E_\ell=\min\left\{1,\dim_{\rm H}E\right\},
\end{equation}
where $E_\ell$ is the orthogonal projection of the set $E$ to the line $\ell$.
 So, for every set $E$ having Hausdorff dimension smaller than one,  the dimension is preserved by \textbf{all} orthogonal projections.
We remark that  the well known theorem of Marstrand \cite{Mattila1999} guarantees the same
only for lines $\ell$ in Lebesgue almost all directions.

\medskip

In fact we prove much more than \eqref{z3}. Namely, in Corollary \ref{u66} we point out that whenever $\dim_{\rm H}E<1$ (that is
$1/M^2<p<1/M$), for almost all realizations of $E$, \textbf{all} lines
intersect at most $cn$ (the constant $c$ may depend on the realization) level $n$ squares that are retained (among the exponentially many retained level $n$-squares).
This observation  is the main contribution of our paper to this field. It gives much more precise information than \eqref{z3}, we are also able to apply this result to the algebraic sums of independent fractal percolations on the line (see Theorem \ref{thm:prod}).

\medskip

If we are still on the plane and $1/M<p<p_{\mathrm{crit}}$
then $E$ is random dust with
$\dim_{\rm H}E>1$ (see \eqref{155}).
It follows from our earlier result  \cite{Rams} that
in this case all kind of projections of $E$ contain some intervals.
In this paper we verify that any smooth image $f(E)$ of $E$ by a componentwise strictly increasing function $f$ (with non-zero partial derivatives) contains an interval. This shows that although $E$ is  a random dust, some  of its geometric measure theoretical properties are  rather regular.

Finally, we investigate the existence of some intervals in the algebraic sum of random Cantor sets. This theme of research arose naturally in relation with the hyperbolic behavior  of  some one-parameter families of diffeomorphisms, see \cite{Palis1987}
for a comprehensive account.
It was proved in \cite{Dekking2008} that
the algebraic sum  of two fractal percolations (which is actually the $45^{\circ}$ degree projection of the product set)
contains an interval, if and only if the sum of their Hausdorff dimension is greater than one. We extend this result to any dimension $d\geq 3$. The major difficulty in this generalization
is to handle the problem caused by
the presence of much more dependence in between the cubes of the level $n$ approximation of the product of $d\geq 3$ fractal percolations than that of in the case when $d=2$.

\section{Notation}\label{154}
\subsection{The $d$-dimensional fractal percolation with parameters $M,p$}\label{u87}

The intuitive definition  provided below is given in $\mathbb{R}^d$ for an arbitrary $d\geq 1$  (for a more formal definition see \cite[Section 2.2]{Dekking2009}). Fix a natural number $M\geq 2$ and a probability $0<p<1$. Throughout the construction we define a random, nested sequence $E_n$ which is the union of some randomly chosen level-$n$ cubes.
These are the $M$-adic cubes, that is coordinate-hyperplane  parallel cubes of side length $M^{-n}$ with centers chosen from
$$
\pmb{\mathcal{N}_n}:=
\left\{\mathbf{x}=(x_1,\dots ,x_d):x_i=\left(k_i+\frac{1}{2}\right)M^{-n},\ 0\leq k_i\leq M^n-1\right\}.
$$
We denote
the level-$n$ cube with center $\mathbf{x}\in \mathcal{N}_n$ by $K_n(\mathbf{x})$.
$$
K_n(\mathbf{x})=
\mathbf{x}+\left[-\frac{1}{2M^n},\frac{1}{2M^n}\right]^d.
$$
We will sometimes identify the level-$n$ cubes with their centers. We write $\mathcal{N}_n$
for the collections of level $n$ cubes.

The construction of the fractal percolation is as follows.
We start with the unit cube $K=[0,1]^d$.
For every $\mathbf{x}\in \mathcal{N}_1$ we retain the cube $K_1(\mathbf{x})$  with probability $p$ and  we discard it with probability $1-p$, independently.
The union of  cubes
retained is denoted $E_1$. For every retained cube $K_1(\mathbf{x})$ we consider all cubes $K_2(\mathbf{y})\subset K_1(\mathbf{x})$ and each of those is retained with probability $p$, discarded with probability $1-p$ independently.   The union of retained level-$2$ cubes is denoted by $E_2$.
We continue this process ad infinitum to obtain $E_n$ for every $n$. In each step the retaining/discarding of every cube are independent events.  Clearly, $E_n\subset E_{n-1}$.
For $n\geq 1$ set
\begin{equation}\label{v11}
 \pmb{\mathcal{E}_n}:=
\left\{\mathbf{x}\in \mathcal{N}_n:K_n(\mathbf{x})\mbox{ is retained}\right\} .
\end{equation}

The \emph{$d$-dimensional fractal percolation with parameters $M,p$} is the random set $E=E(d,M,p)$
$$
\pmb{E}:=\bigcap\limits_{n=1}^{\infty } E_n.
$$
We call $E_n$  the \emph{$n$-th approximation of the fractal percolation}.
The corresponding probability space $\pmb{(\Omega ,\mathcal{F},\mathbb{P})}$ can be described in terms of infinite $M^d$-ary labeled trees  (see e.g. \cite[Section 2]{Dekking2009} for the details.) Further, we write $\pmb{\mathcal{F}_n}\subset \mathcal{F}$ for the $\sigma $-algebra generated by the selected level-$n$ cubes.

\begin{remark}\label{u84}
A very important feature of the construction is that
$$
E \mbox{ is \textbf{statistically self-similar} with completely\textbf{ independent } cylinders. }
$$
That is
\begin{description}
  \item[(a)] For every $n\geq 1$ and $\mathbf{x}\in \mathcal{E}_n$, an appropriately re-scaled copy  of the random set $E\cap K_n(\mathbf{x})$ has the same distribution as $E$ itself.
  \item[(b)] The sets $\left\{E\cap K_n(\mathbf{x})\right\}_{\mathbf{x}\in \mathcal{E}_n}$ are independent.
\end{description}
\end{remark}

As we have already mentioned, $\left\{\#\mathcal{E}_n\right\}$
is a branching process with offspring distribution $\texttt{Binomial}(M^d,p)$.
 Hence
\begin{equation}\label{163}
\mathbb{P}\left( E\ne \emptyset\right)>0
\mbox{ if and only if } p>\frac{1}{M^d}.
\end{equation}
 Falconer \cite{Falconer1986} and  Mauldin, Williams \cite{Mauldin1986} proved that
\begin{equation}\label{155}
 E\ne \emptyset \mbox{ implies that }
  \dim_{\rm H}(E)=\dim_{\rm B}(E)=\frac{\log p\cdot M^d}{\log M}\mbox{ a.s. }
\end{equation}
In both parts of the paper the intersection of $E$ with hyperplanes play the most important role so we introduce a notation related to it.
Let $H$ be a hyperplane in $\mathbb{R}^d$.  Set
$$
\pmb{\mathcal{E}_{n}(H)}:=\left\{\mathbf{x}\in \mathcal{E}_n:\texttt{int}(K_n(\mathbf{x}))\cap H\ne\emptyset \right\}.
$$
The $d-1$ dimensional Lebesgue measure of $E_{n}\cap {H}$ is
\begin{equation}\label{u85}
  \pmb{L_{n}(H)}:=\mathcal{L}{\rm eb}_{d-1}(E_{n}\cap {H})=
\sum\limits_{\mathbf{x}\in \mathcal{E}_{n}(H)}
\mathcal{L}{\rm eb}_{d-1}\left(
K_n(\mathbf{x})\cap H\right).
\end{equation}

\section{The slices and orthogonal projections of $E$ on the plane. The case of small $E$. }

In this section $d=2$.
The main result of the first part of the paper is
\begin{theorem}\label{u1} For every $0<p\leq 1$ for almost every realization of $E$ (conditioned on $E\ne\emptyset $),
for \textbf{all} straight lines $\ell$ which are not parallel to the coordinate axes
 we have:
\begin{equation}\label{u2}
 \dim_{\rm H}(E_\ell)=\min\left\{1,\dim_{\rm H}(E)\right\}.
\end{equation}
\end{theorem}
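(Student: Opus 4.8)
The plan is to separate the two inequalities in \eqref{u2} and to handle the case $\dim_{\rm H}E>1$ by reduction to the main combinatorial estimate of the paper. The easy inequality is $\dim_{\rm H}(E_\ell)\le\min\{1,\dim_{\rm H}(E)\}$: orthogonal projection is Lipschitz, so $\dim_{\rm H}(E_\ell)\le\dim_{\rm H}(E)$ always, and $E_\ell$ lies in a line, giving $\dim_{\rm H}(E_\ell)\le 1$. So the content is the lower bound, and it suffices to treat a fixed line $\ell$ not parallel to the axes and then remove the exceptional null set of realizations uniformly over $\ell$.

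First I would dispose of the regime $\dim_{\rm H}E\le1$, i.e.\ $p\le 1/M$. Here the target is $\dim_{\rm H}(E_\ell)=\dim_{\rm H}(E)$ for \emph{all} $\ell$. The natural route is Corollary \ref{u66} (stated in the introduction): for a.e.\ realization there is a constant $c=c(\omega)$ such that every line meets at most $cn$ retained level-$n$ squares. Reading this the other way: if $\ell$ meets $N_n(\ell)$ retained level-$n$ squares, then along $\ell$ the projection $E_\ell$ is covered by $N_n(\ell)$ intervals of length $\asymp M^{-n}$, while $\#\mathcal{E}_n\approx (pM^2)^n$, so $\ell$ ``sees'' essentially all of the mass of $E$ up to a polynomial factor. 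More precisely, I would push a Frostman measure $\mu$ supported on $E$ (the natural branching measure, for which $\mu(K_n(\mathbf{x}))\approx (pM^2)^{-n}$ for retained $\mathbf{x}$, with the standard logarithmic corrections) through the projection and estimate the $s$-energy of $\mu_\ell:=(\pi_\ell)_*\mu$ for any $s<\dim_{\rm H}E$; the polynomial bound $cn$ on the number of retained squares a line can cross controls the ``overlap'' contributions and keeps the energy finite. This is exactly the mechanism by which the $cn$-bound is ``much more precise'' than \eqref{z3}, as advertised in the introduction.

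For the regime $\dim_{\rm H}E>1$ (that is $1/M<p<1$, including $p>p_{\mathrm{crit}}$), the target reduces to $\dim_{\rm H}(E_\ell)=1$ for all non-axis-parallel $\ell$, i.e.\ $E_\ell$ has positive Lebesgue measure, or at least full dimension. Here I would not reprove everything from scratch: the paper says that \cite{Rams} already gives that all projections of $E$ contain intervals in an even stronger ``non-axis-parallel $f$'' form, so in particular $\dim_{\rm H}(E_\ell)=1$; alternatively one runs the $L_n(H)$ second-moment machinery of \eqref{u85} along the pencil of lines parallel to $\ell$, showing $L_n(H)\not\to 0$ on a set of lines of positive measure and then bootstrapping to \emph{all} lines by a zero-one/self-similarity argument. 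The main obstacle is precisely the quantifier ``for \textbf{all} $\ell$ simultaneously'': an energy or second-moment estimate naturally yields the conclusion for Lebesgue-a.e.\ direction (Marstrand-type) or for a.e.\ line in a pencil, and upgrading to every single line requires the deterministic combinatorial input. I would handle this by the continuity/compactness trick used throughout: parametrise lines by $(\theta,b)$ in a compact set, note that $N_n(\ell)$ as a function of $(\theta,b)$ jumps only finitely often at each level $n$, take a union bound over a suitable $M^{-n}$-net of parameters against the (super-)exponentially small probability that the $cn$-bound fails for a fixed line, apply Borel--Cantelli, and then use the fact that a line close to $\ell$ meets essentially the same squares to extend from the net to all $\ell$. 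Combining the two regimes and intersecting the (countably many) a.s.\ events gives the theorem.
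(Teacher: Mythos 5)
Your proposal follows essentially the same route as the paper: the upper bound is the trivial Lipschitz one, the regime $Mp>1$ is delegated to \cite{Rams} exactly as the paper does (leading to the Principal Assumption \eqref{u3}), and the regime $p\leq 1/M$ rests on the uniform-in-$\ell$ bound $\#\mathcal{E}_n(\ell)\leq 6C_2 n$ of Corollary \ref{u66}, which is indeed the paper's key input. The only divergence is the final measure-theoretic step: you push forward the natural branching measure and control an $s$-energy, which forces you to handle the (unbounded) martingale fluctuations of $\mu(K_n(\mathbf{x}))$ via ``standard logarithmic corrections,'' whereas the paper, for each $q<r<s=\dim_{\rm H}E$, invokes Frostman's Lemma (using $\mathcal{H}^r(E)=\infty$) to get a measure with the clean uniform bound $\mu(K_n(\mathbf{x}))\leq M^{-nr}$ and then applies the mass distribution principle directly to $\nu_\alpha=\mathrm{proj}^\alpha_*\mu$; this sidesteps any moment estimates on the branching limit and is the tidier of the two equivalent arguments. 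Your closing paragraph about nets, Borel--Cantelli and upgrading from a.e.\ line to all lines is really a description of how Theorem \ref{u36}/Corollary \ref{u66} is proved, not an extra step needed once that corollary is granted.
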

We remark that in the special case when  $\ell$ is parallel to one of the coordinate axis, \eqref{u2} was verified by
Falconer \cite{Falconer1989} (in a special case) and Dekking, Meester \cite{Dekking1990} in full generality. (See also \cite{Dekking1988} for the same result for box dimension.)
Combining this with our theorem above we can state:
\begin{corollary}\label{v14}For every $0<p\leq 1$
for almost every realization of $E$ (conditioned on $E\ne\emptyset $),
for \textbf{all} straight lines $\ell$
 we have:
\begin{equation}\label{v15}
 \dim_{\rm H}(E_\ell)=\min\left\{1,\dim_{\rm H}(E)\right\}.
\end{equation}
\end{corollary}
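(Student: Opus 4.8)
The plan is to deduce Corollary \ref{v14} by splitting the family of all straight lines $\ell$ into two groups — those parallel to a coordinate axis and those that are not — handling the second group by Theorem \ref{u1} and the first group by the already known results of Falconer \cite{Falconer1989} and Dekking--Meester \cite{Dekking1990} (see also \cite{Dekking1988} for the box-dimension version).

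First I would record the trivial reduction for axis-parallel lines. If $\ell$ is parallel to the $x$-axis, then the orthogonal projection $E_\ell$ coincides, up to an isometry of $\ell$ with $\mathbb{R}$, with the projection $\pi_1(E)=\{x\colon (x,y)\in E\text{ for some }y\}$ onto the first coordinate axis; similarly every line parallel to the $y$-axis yields (a translate of) $\pi_2(E)$. Hence there are really only two axis-parallel projections to control. By \cite{Dekking1990} each of $\pi_1(E)$ and $\pi_2(E)$ satisfies $\dim_{\rm H}(E_\ell)=\min\{1,\dim_{\rm H}(E)\}$ almost surely, conditioned on $E\neq\emptyset$; let $B$ be the full-measure event on which this holds for both coordinate directions simultaneously (a finite intersection of full-measure events, hence full measure).

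Next, let $A$ be the full-measure event furnished by Theorem \ref{u1}, conditioned on $E\neq\emptyset$, on which \eqref{u2} holds for \emph{every} line $\ell$ not parallel to a coordinate axis. Then $A\cap B$ again has full probability conditioned on $E\neq\emptyset$, and on this event \eqref{v15} holds for every line $\ell$ whatsoever, which is exactly the assertion of the corollary. One may additionally remark, using \eqref{155}, that on this event the right-hand side $\min\{1,\dim_{\rm H}(E)\}$ equals the deterministic constant $\min\{1,\log(pM^2)/\log M\}$, so the conclusion is genuinely uniform over all lines and all typical realizations.

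There is essentially no obstacle here: all the substance is already packaged into Theorem \ref{u1}. The only point needing a word of care is the first step — one must check that the cited theorems really bound the dimension of the projection onto each coordinate \textbf{axis} (equivalently, onto every axis-parallel line), not merely for Lebesgue-almost-every such line, and that their exceptional null sets can be absorbed together with the null set discarded in Theorem \ref{u1}; both are immediate, since a finite union of null sets is null.
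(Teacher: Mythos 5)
Your proposal is correct and is exactly the argument the paper intends: the corollary is obtained by combining Theorem \ref{u1} for lines not parallel to the axes with the Falconer and Dekking--Meester results for the two axis-parallel projections, intersecting the finitely many full-measure events. The paper states this combination without elaboration, so your explicit bookkeeping (reducing all axis-parallel lines to the two coordinate projections and absorbing the null sets) is a faithful, slightly more detailed rendering of the same proof.
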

In \cite{Rams} we proved that whenever $Mp>1$ (that is conditioned on $E\ne\emptyset $, a.s. $\dim_{\rm H}(E)>1$)
for almost all realization $\omega $ leading to a nonempty $E$, for all straight lines $\ell$, the orthogonal projection $E_\ell(\omega )$ contains some intervals. This implies that the assertion of our  Theorem \ref{u1} holds whenever $Mp>1$.
Using this and (\ref{163}) without loss of generality
in the rest of the section we may always assume that

\textbf{Principal Assumption for this Section:}
\begin{equation}\label{u3}
 M^{-2}<p\leq M^{-1}.
\end{equation}

\subsection{Projection $\mathrm{proj}^\alpha$}
We define the argument $\pmb{\mathrm{Arg}(\ell)}\in \left[0,\pi/2 \right)$ of a line as the oriented angle it makes with the $x$-axis.
Instead of considering the orthogonal projection of $E$ to a line $\ell$ in direction $\theta$ we will consider the linear projection (in the same direction $\alpha=\theta \pm \pi/2$) to one of diagonals of $K$. This replacement does not change the Hausdorff dimension of the projection.

More formally, let $\alpha \in (0,\pi)\setminus \{\pi/2\}$ (we are not interested in the horizontal and vertical projections).
If $\alpha \in \left(0,\frac \pi 2\right)$ then $\pmb{\Delta ^{\alpha }}$ denotes the
decreasing diagonal of $K$ (the diagonal connecting points $(0,1)$ and
$(1,0)$). If $\alpha \in \left(\frac{\pi }{2},\pi \right)$ then
$\Delta ^\alpha $ is the increasing diagonal of $K$. For an
$\alpha \in \left(0,\pi \right)\setminus \{\pi/2\}$ we write $\pmb{\mathrm{proj}^\alpha}:K\to \Delta^ \alpha $ for the
angle $\alpha $ projection to the diagonal $\Delta^ \alpha $ in
$K$.
Without loss of generality  we may confine ourselves to the angle
\begin{equation}\label{u6}
  0<\alpha <\pi /2.
\end{equation}
projections to the  decreasing diagonal which we denote by $\Delta $.

\subsection{The slices}

In this and the following sections we study the length of the intersection of any lines with the  $n$-th approximation of fractal percolation. We formulate the results only for lines which are neither horizontal nor vertical because this is the only case that we are going to apply. However, the assertion of Theorem \ref{u36} also holds for horizontal and vertical lines. Since the proof is not very much different we omit it.

Consider the family of all lines with argument between 0 and $\pi/2$ having non-empty intersection with $\mathrm{int}(\Delta) $. The unit square $K$ cuts out a line segment from each of these lines. Let $\pmb{\mathfrak{L}}$ be the set of all line segments obtained in this way.
The sets of the form $E\cap \ell, \ell\in \mathcal{L}$
are the slices of $E$. The segment  $\ell\in \mathcal{L}$  which has argument $\alpha $ and intersects $\Delta $ at the point $z$  is denoted by $\pmb{\ell^{\alpha} (z) }$.

We will study the length of the slices of the  level-$n$
approximation $E_n$:
\begin{equation}\label{u31}
\pmb{L_{n}(\ell)}:=\left|E_n\cap \ell\right|,\quad \ell\in \mathfrak{L}.
\end{equation}
It is immediate  from  the construction of the fractal percolation that
for every $\ell\in \mathfrak{L}$, $n\geq 1$,
\begin{equation}\label{u90}
\forall \mathbf{x}\in \mathcal{N}_{n-1},\   \mathbb{E}\left[
|E_n\cap \ell\cap K_{n-1}(\mathbf{x})|\Big|\mathbf{x}\in \mathcal{E}_{n-1}
\right]
=
  p|\ell\cap K_{n-1}(\mathbf{x})|.
\end{equation}
Clearly, $\mathfrak{L}$ can be presented as a countable union of families of lines segments
$\mathfrak{L}^\theta $ whose angles $\mathrm{Arg}(\ell)$ are $\theta $-separated from both $0$ and $\pi /2$:
$$
\pmb{\mathfrak{L}^\theta} :=\left\{
\ell\in \mathfrak{L}:\min\left\{\mathrm{Arg}(\ell),\frac{\pi }{2}-\mathrm{Arg}(\ell)\right\}>\theta
\right\}, \quad 0<\theta <\pi /4.
$$
Then $\mathcal{L}=\bigcup _{k=2}^{\infty }\mathcal{L}^{1/k}$.
 We would like to get an upper bound for $\#\mathcal{E}_n(\ell)$ for an arbitrary $\ell\in \mathfrak{L}^\theta $. To do so, first we give a uniform upper bound for $L_n(\ell)$ for all $\ell\in \mathfrak{L}^\theta$ and then we use the following easy fact:
\begin{fact}\label{u96}
Let $\mathcal{E}_{n}^{\mathrm{bigg}}(\ell):=\left\{\mathbf{x}\in \mathcal{E}_{n}(\ell):
\left|\ell\cap K_n(\mathbf{x})\right|\geq M^{-n}/\sqrt[]{2}
\right\}$, analogously set $\mathcal{E}_{n}^{\mathrm{small}}(\ell):=\left\{\mathbf{x}\in \mathcal{E}_{n}(\ell):
\left|\ell\cap K_n(\mathbf{x})\right|<M^{-n}/\sqrt[]{2}
\right\}$. Let $\ell^u,\ell^l$ be  lines which are parallel to $\ell$, their distance from $\ell$ is  in $\left(M^{-n}/2\ \sqrt[]{2},M^{-n}/\sqrt[]{2}\right)$ and they lie on  opposite sides of $\ell$. Then
\begin{equation}\label{u94}
\mathcal{E}_{n}^{\mathrm{small}}(\ell)\subset
\mathcal{E}_{n}^{\mathrm{bigg}}(\ell^u)\cup
\mathcal{E}_{n}^{\mathrm{bigg}}(\ell^l)
\end{equation}
That is
\begin{equation}\label{u95}
  \#\mathcal{E}_n(\ell)\leq 2M^n\left(L_n(\ell)+L_n(\ell^u)+L_n(\ell^l)\right).
\end{equation}

\end{fact}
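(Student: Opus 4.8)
\emph{Plan of proof.} The plan is to prove the inclusion \eqref{u94} first; the quantitative bound \eqref{u95} then drops out by counting. Indeed, since $\ell$, $\ell^u$ and $\ell^l$ are neither horizontal nor vertical, any level-$n$ cube meeting such a line $\ell'$ in a set of positive length meets its interior, and distinct level-$n$ cubes meet $\ell'$ along segments with disjoint interiors; hence $L_n(\ell')=\sum_{\mathbf x\in\mathcal E_n(\ell')}|\ell'\cap K_n(\mathbf x)|\ge \tfrac{M^{-n}}{\sqrt2}\,\#\mathcal E_n^{\mathrm{bigg}}(\ell')$, i.e. $\#\mathcal E_n^{\mathrm{bigg}}(\ell')\le 2M^nL_n(\ell')$. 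Writing $\mathcal E_n(\ell)$ as the disjoint union $\mathcal E_n^{\mathrm{bigg}}(\ell)\sqcup\mathcal E_n^{\mathrm{small}}(\ell)$ and using $\mathcal E_n^{\mathrm{small}}(\ell)\subseteq\mathcal E_n^{\mathrm{bigg}}(\ell^u)\cup\mathcal E_n^{\mathrm{bigg}}(\ell^l)$ from \eqref{u94}, the three bounds add up to \eqref{u95}.

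To prove \eqref{u94} I would fix $\mathbf x\in\mathcal E_n^{\mathrm{small}}(\ell)$, put $Q:=K_n(\mathbf x)$, and rescale by $M^n$ so that $Q$ is the unit square; let $c<1/\sqrt2$ be the (rescaled) length of $\ell\cap Q$. As $c<1$, $\ell$ cannot meet two opposite sides of $Q$, so it clips off a right triangle at one corner $V$, of hypotenuse $c$; then the distance $d_0$ from $V$ to $\ell$ satisfies $d_0\le c/2<1/(2\sqrt2)$, and the centre $O$ of $Q$ lies on the opposite side of $\ell$ from $V$ (otherwise $O$ would lie in that triangle, which is contained in the disc of radius $c$ about $V$, forcing $|OV|\le c<1/\sqrt2$, whereas $|OV|=1/\sqrt2$). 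Next I would analyse $g(t)$, the length of the chord cut from $Q$ by the line parallel to $\ell$ at signed perpendicular distance $t$ from $\ell$, with $t>0$ pointing towards $O$. Putting $\mu:=\min\{\tan(\mathrm{Arg}(\ell)),\cot(\mathrm{Arg}(\ell))\}\in(0,1]$ and using the reflection of $Q$ in a diagonal (which interchanges $\tan\leftrightarrow\cot$) to normalise, one checks that $g$ is trapezoidal: $g\equiv 0$ for $t\le -d_0$; on $[-d_0,t_*]$ it grows linearly with slope $\sigma:=\mu+\mu^{-1}\ge 2$ from $0$ to $P:=\sqrt{1+\mu^2}$; then it is constant $=P$ on a (possibly empty) plateau; then it decreases linearly with slope $-\sigma$ back to $0$. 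A short computation with the clipped triangle (legs $a,\mu a$, hypotenuse $a\sqrt{1+\mu^2}=c$) also gives $d_0=\mu c/(1+\mu^2)$.

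With $g$ so described the rest is a one-variable estimate. The superlevel set $\{t:g(t)\ge 1/\sqrt2\}$ is a closed interval $[\tau_1,\tau_2]$; on the rising edge $g(t)=c+\sigma(t+d_0)$, whence $\tau_1=-d_0+\sigma^{-1}(1/\sqrt2-c)\le 1/(2\sqrt2)$ since $\sigma\ge 2$ and $c,d_0\ge 0$; on the falling edge one finds, after substituting the formula for $d_0$, that $\tau_2=\tfrac{\mu+1}{P}-\tfrac{\mu(c+1/\sqrt2)}{P^2}$, and the desired $\tau_2\ge 1/\sqrt2$ follows (using $c<1/\sqrt2$ and $P^2=1+\mu^2$) from $\sqrt{1+\mu^2}\ge(1+\mu)/\sqrt2$, i.e. from $(\mu-1)^2\ge 0$. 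Hence $(1/(2\sqrt2),1/\sqrt2)\subseteq[\tau_1,\tau_2]$, so $g(t)\ge 1/\sqrt2$ for all $t$ in that range. Finally, exactly one of $\ell^u,\ell^l$ lies on the same side of $\ell$ as $O$ --- call it $\ell'$ --- and its rescaled distance $\rho$ from $\ell$ is in $(1/(2\sqrt2),1/\sqrt2)$; therefore $|\ell'\cap Q|=M^{-n}g(\rho)\ge M^{-n}/\sqrt2>0$, so $\ell'$ meets $\mathrm{int}(Q)$ and, as $\mathbf x$ is retained, $\mathbf x\in\mathcal E_n^{\mathrm{bigg}}(\ell')\subseteq\mathcal E_n^{\mathrm{bigg}}(\ell^u)\cup\mathcal E_n^{\mathrm{bigg}}(\ell^l)$. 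This is \eqref{u94}.

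The main obstacle is the middle step: establishing the exact trapezoidal shape of $g$ (the pair of sides of $Q$ hit by the translated line changes as $t$ grows, and one must carefully exploit the $\tan\leftrightarrow\cot$ symmetry) together with the clean identity $d_0=\mu c/(1+\mu^2)$, so that the two endpoint estimates $\tau_1\le 1/(2\sqrt2)$ and $\tau_2\ge 1/\sqrt2$ both reduce to elementary inequalities. Everything else --- the counting in \eqref{u95} and the reduction to a single cube --- is routine.
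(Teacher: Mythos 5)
The paper states this Fact without proof (it is offered as an ``easy fact''), so there is no argument of the authors' to compare yours against; what you have done is supply the missing elementary geometry, and your argument is correct and complete. The skeleton is exactly what one would expect: reduce \eqref{u95} to \eqref{u94} by the counting bound $\#\mathcal E_n^{\mathrm{bigg}}(\ell')\le \sqrt2\,M^nL_n(\ell')\le 2M^nL_n(\ell')$, and prove \eqref{u94} by analysing the chord-length function $g(t)$ of a single rescaled square, which is indeed a symmetric trapezoid of peak $P=\sqrt{1+\mu^2}\ge 1$ and edge slope $\sigma=\mu+\mu^{-1}\ge 2$; your two endpoint estimates $\tau_1\le 1/(2\sqrt2)$ and $\tau_2\ge 1/\sqrt2$ check out, the latter reducing correctly to $(1-\mu)^2\ge 0$, and the orientation step (the centre $O$ lies on the opposite side of $\ell$ from the clipped corner $V$, because the clipped triangle sits in the disc of radius $c<1/\sqrt2=|OV|$ about $V$) is sound. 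One small slip: on the rising edge $g(t)=\sigma(t+d_0)=c+\sigma t$ (since $d_0=c/\sigma=\mu c/(1+\mu^2)$), not $c+\sigma(t+d_0)$, so your displayed $\tau_1=-d_0+\sigma^{-1}(1/\sqrt2-c)$ undercounts by $d_0$; the correct value is $\tau_1=\sigma^{-1}(1/\sqrt2-c)$. This does not matter, since both expressions are $\le 1/(2\sqrt2)$ for $\sigma\ge2$ and $c,d_0\ge0$, which is all you use. I would only suggest stating explicitly that $O\notin\ell$ (any chord through the centre has length $\ge 1$), so that ``exactly one of $\ell^u,\ell^l$ lies on the $O$-side'' is literally justified.
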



We will need the following $M^{-n}$-dense subset of $\mathfrak{L}^\theta $
\begin{definition}\label{u91}
For every $0<\theta <\pi /4$ pick an arbitrary $ M^{-2n}$-dense set subset
$\Delta _n\subset \Delta $ and also an $M^{-2n}$-dense subset $A_{n}^{\theta }$ of $\left(\theta ,\frac{\pi }{2}-\theta \right)$
such that $\Delta _n\subset \Delta _{n+1}$ and $A_{n}^{\theta }\subset A_{n+1}^{\theta }$. Let
$$
\mathfrak{L}_{n}^{\theta }:=\left\{\ell\in \mathfrak{L}^{\theta }:
z(\ell)\in \Delta _n\mbox{ and } \mathrm{Arg}(\ell)\in A_{n}^{\theta }.
\right\}
$$
\end{definition}
Clearly,
\begin{equation}\label{u92}
  \mathfrak{L}_{n}^{\theta }\subset \mathfrak{L}_{n+1}^{\theta } \mbox{ and }
  \#\mathfrak{L}_{n}^{\theta }=M^{4n}.
\end{equation}
By elementary geometry we get
\begin{fact}\label{u49}
For every $0<\theta <\pi /4$ we can find an $s_\theta $  such that for every $\ell\in \mathfrak{L}^\theta $ we can choose  an $\ell'\in \mathfrak{L}_n^\theta $ satisfying
\begin{equation}\label{u93}
  L_{n-1}(\ell)\leq L_{n-1}(\ell')+s_\theta M^{-(n-1)}.
\end{equation}
\end{fact}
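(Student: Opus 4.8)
The plan is to reduce the problem to a two-variable continuity/Lipschitz estimate for the function $(\alpha,z)\mapsto L_{n-1}(\ell^{\alpha}(z))$ on the compact parameter rectangle $(\theta,\frac\pi2-\theta)\times\Delta$, and then to invoke the density of $\mathfrak{L}_n^\theta$ from Definition \ref{u91}. Concretely, fix $\ell=\ell^{\alpha}(z)\in\mathfrak{L}^\theta$. By \eqref{u92} the set $\mathfrak{L}_n^\theta$ is $M^{-2n}$-dense in $\mathfrak{L}^\theta$ in the sense that one can choose $\ell'=\ell^{\alpha'}(z')\in\mathfrak{L}_n^\theta$ with $|\alpha-\alpha'|\le M^{-2n}$ and $|z-z'|\le M^{-2n}$ (this is exactly the meaning of $\Delta_n$ being $M^{-2n}$-dense in $\Delta$ and $A_n^\theta$ being $M^{-2n}$-dense in $(\theta,\frac\pi2-\theta)$). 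It then suffices to bound
\[
\bigl|L_{n-1}(\ell)-L_{n-1}(\ell')\bigr|\le s_\theta M^{-(n-1)}
\]
with $s_\theta$ depending only on $\theta$, since this immediately gives \eqref{u93}.

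First I would handle the case where $\ell$ and $\ell'$ are translates of each other, i.e. $\alpha=\alpha'$ but $z\ne z'$. Here $E_{n-1}\cap\ell'$ and $E_{n-1}\cap\ell$ are sections of the same planar set $E_{n-1}$ (a finite union of level-$(n-1)$ squares of side $M^{-(n-1)}$) by two parallel lines at distance at most $M^{-2n}/\sin\theta$ apart. For a single square $K_{n-1}(\mathbf{x})$, the length $|\ell\cap K_{n-1}(\mathbf{x})|$ as a function of the offset $z$ is piecewise linear with slope bounded by a constant depending only on $\theta$ (the chord length through a square of side $a$ varies at rate $O(1/\sin\theta\cos\theta)$ in the offset, with total variation $O(a/\sin\theta)$), and the number of squares meeting $\ell$ is $\#\mathcal{E}_{n-1}(\ell)$. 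Rather than summing these per-square bounds (which would cost a factor $\#\mathcal{E}_{n-1}(\ell)$), the cleaner route is to note that $E_{n-1}$ is contained in the union of the retained squares, so $E_{n-1}\cap\ell$ and $E_{n-1}\cap\ell'$ differ only inside the symmetric difference of $E_{n-1}$ and its translate by $z'-z$; a Fubini/Steinhaus-type estimate bounds the difference of the two slice lengths by $\mathrm{const}(\theta)\cdot M^{-(n-1)}$ once the translation distance is $\le M^{-2n}\le M^{-(n-1)}$, because the translated copy can disagree with the original only in a boundary strip of width $\le M^{-2n}$ around the $O(M^{n-1})$ relevant vertical segments, contributing total length $O(M^{n-1}\cdot M^{-2n})=O(M^{-(n-1)})$.

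Next I would treat the rotation, $z=z'$ but $\alpha\ne\alpha'$, by a similar argument: rotating the line by angle $M^{-2n}$ about its intersection point with $\Delta$ moves every point of the segment (which has length $\le\sqrt2$) by at most $\sqrt2\,M^{-2n}$, so again $\ell$ and $\ell'$ sweep out a region of area $O(M^{-2n})$, and the change in $L_{n-1}$ is controlled by the same boundary-strip count, $O(M^{n-1}\cdot M^{-2n})=O(M^{-(n-1)})$, with implied constant depending only on $\theta$ through the bound $\#\mathcal{E}_{n-1}(\ell)=O(M^{n-1})$ valid uniformly for $\ell\in\mathfrak{L}^\theta$ (this uniform count itself follows from elementary geometry: a line $\theta$-separated from the axes meets at most $O(M^{n-1}/\sin\theta\cos\theta)$ level-$(n-1)$ squares). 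Finally, combining the translation and rotation estimates by the triangle inequality and absorbing both constants into a single $s_\theta$ finishes the proof.

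I expect the main obstacle to be bookkeeping the geometry so that the constant genuinely depends only on $\theta$ and not on $n$ or on the realization: one must be careful that the "boundary strip" argument is applied to the \emph{deterministic} union of $M$-adic squares $E_{n-1}$ and uses only that its boundary consists of $O(M^{n-1})$ axis-parallel segments each of length $M^{-(n-1)}$, together with the uniform chord-count bound for $\theta$-separated lines. None of this uses randomness, so the estimate is pathwise and uniform, exactly as the statement requires.
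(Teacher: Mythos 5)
Your argument is correct, and it is worth noting that the paper itself offers no proof of Fact \ref{u49} at all --- it is introduced with ``By elementary geometry we get'' and left to the reader --- so your write-up supplies a justification rather than paralleling one. The core of your proof is sound: choose $\ell'=\ell^{\alpha'}(z')\in\mathfrak{L}_n^\theta$ with $|\alpha-\alpha'|,|z-z'|\le M^{-2n}$, observe that corresponding points of $\ell$ and $\ell'$ are then within $C(\theta)M^{-2n}$ of each other, and bound the change in $|E_{n-1}\cap\ell|$ using only that $E_{n-1}$ is a deterministic union of level-$(n-1)$ squares and that a $\theta$-separated segment meets $O(M^{n-1})$ such squares. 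Two small remarks. First, the per-square summation you set aside as too lossy actually works perfectly well: the chord length through a single square is Lipschitz in the line parameters with constant $C(\theta)$, the number of squares met is deterministically $O(M^{n-1})$ (indeed any line meets at most $2M^{n-1}+1$ cells of the level-$(n-1)$ grid, with no $\theta$-dependence needed), so the total discrepancy is $O(M^{n-1})\cdot C(\theta)M^{-2n}=C'(\theta)M^{-n-1}$, which is even smaller than the required $s_\theta M^{-(n-1)}$; your boundary-strip argument reaches the same (non-tight but sufficient) bound $O(M^{-n-1})\le O(M^{-(n-1)})$, and the arithmetic $M^{n-1}\cdot M^{-2n}=M^{-n-1}$ should be stated as such rather than as $M^{-(n-1)}$. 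Second, you prove the two-sided estimate $|L_{n-1}(\ell)-L_{n-1}(\ell')|\le s_\theta M^{-(n-1)}$, which is stronger than the one-sided inequality \eqref{u93} actually needed; that is harmless. The only bookkeeping point to make explicit is that the perturbed parameters can indeed be taken in $A_n^\theta\subset(\theta,\tfrac\pi2-\theta)$ and $\Delta_n\subset\Delta$, so that $\ell'$ genuinely lies in $\mathfrak{L}_n^\theta$, and that moving the endpoints of the segment (since $\ell'$ is re-cut by the unit square) contributes only a further $O(M^{-2n})$ to the length. With these cosmetic fixes the proof is complete.
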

We fix such an $s_\theta $ for every $\theta $.
\subsection{The length of the slices}

In this Section we prove a theorem which says that for almost all realizations, the length of \textbf{all } (non-vertical, non-horizontal) level-$n$ slices of angle $\alpha $ are less than $\mathrm{const}\cdot n M^{-n}$ if $n$ is big enough.

\begin{theorem}\label{u36}There exists a $C_2$ (defined in (\ref{v5})) such that for all $0<\theta  <\frac{\pi }{4}$  the following holds almost surely:

\begin{equation}\label{u65}
\exists N,\ \forall n\geq N,\
\forall \ell\in \mathfrak{L}^\theta ;\quad
L_{n}(\ell) <C_2M^{-n}n .
\end{equation}
Here the threshold $N$ depends on both $\theta $ and the realization.
\end{theorem}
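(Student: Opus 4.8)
The plan is to reduce the uncountable family $\mathfrak{L}^\theta$ to the countable (in fact, for each $n$, finite) family $\mathfrak{L}_n^\theta$ via Fact \ref{u49}, prove an exponential large-deviation estimate for $L_n(\ell')$ with $\ell'\in\mathfrak{L}_n^\theta$, and then close the uncountable-to-countable gap by a Borel--Cantelli argument. First I would fix $\theta$ and work with the discretized slices. For a single segment $\ell'$ and a single level-$(n-1)$ cube $K_{n-1}(\mathbf{x})$ with $\mathbf{x}\in\mathcal{E}_{n-1}$, the conditional expectation \eqref{u90} gives $\mathbb{E}[|E_n\cap\ell'\cap K_{n-1}(\mathbf{x})| \mid \mathbf{x}\in\mathcal{E}_{n-1}] = p\,|\ell'\cap K_{n-1}(\mathbf{x})|$, and by the independence of Remark \ref{u84}(b) the contributions from distinct level-$(n-1)$ cubes are conditionally independent. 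Since $p\le M^{-1}$, each step the expected length is multiplied by $pM\le 1$ (after rescaling by $M$), so $M^{n}L_n(\ell')$ is a supermartingale-like quantity; but I want more than the martingale bound, I want exponential concentration, so I would instead set up a direct moment or Chernoff estimate.

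The technical heart is the following one-step estimate: conditioned on $\mathcal{F}_{n-1}$, the random variable $M^{n}L_n(\ell')$ is a sum over $\mathbf{x}\in\mathcal{E}_{n-1}(\ell')$ of independent bounded contributions, each lying in $[0, M\cdot|\ell'\cap K_{n-1}(\mathbf{x})|]\subset[0,\sqrt2]$ (using $|\ell'\cap K_{n-1}(\mathbf{x})|\le\sqrt2\,M^{-(n-1)}$), with conditional mean $p\,M\cdot|\ell'\cap K_{n-1}(\mathbf{x})| \le M\cdot|\ell'\cap K_{n-1}(\mathbf{x})|/M = |\ell'\cap K_{n-1}(\mathbf{x})|$. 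One then iterates: writing $X_n = M^n L_n(\ell')$, I would bound $\mathbb{E}[e^{\lambda X_n}]$ by conditioning on $\mathcal{F}_{n-1}$ and using a Hoeffding/Bennett-type bound $\mathbb{E}[e^{\lambda X_n}\mid\mathcal{F}_{n-1}] \le e^{\psi(\lambda) X_{n-1}}$ for a suitable convex $\psi$ with $\psi(\lambda)<\lambda$ for small $\lambda>0$ (this uses $pM\le1$ crucially and the $\theta$-separation only insofar as it keeps the number of cubes each slice meets comparable to $M^n$, which affects the constants but not the exponential rate). Iterating gives $\mathbb{E}[e^{\lambda X_n}] \le e^{c(\lambda)\cdot n}$ — polynomial in $n$ in the exponent, not exponential, because the contraction $\psi(\lambda)<\lambda$ is only linear, so the bad events at each level accumulate additively. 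By Markov's inequality, $\mathbb{P}(X_n \ge C_2 n) \le e^{c(\lambda)n - \lambda C_2 n}$, which for $C_2$ large enough (this fixes the constant $C_2$ of the statement, cf.\ \eqref{v5}) is summable over $n$ even after multiplying by $\#\mathfrak{L}_n^\theta = M^{4n}$ from \eqref{u92} — wait, that is not summable; so in fact one needs the decay to beat $M^{4n}$, meaning $\lambda C_2 - c(\lambda) > 4\log M$, achievable by taking $C_2$ large. Then Borel--Cantelli over $\bigcup_{n}\bigcup_{\ell'\in\mathfrak{L}_n^\theta}\{M^nL_n(\ell')\ge C_2 n\}$ gives that almost surely, for all large $n$ and all $\ell'\in\mathfrak{L}_n^\theta$, $L_n(\ell') < C_2 M^{-n} n$.

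Finally I would transfer from $\mathfrak{L}_n^\theta$ to all of $\mathfrak{L}^\theta$ using Fact \ref{u49}: given arbitrary $\ell\in\mathfrak{L}^\theta$, choose $\ell'\in\mathfrak{L}_{n+1}^\theta$ (or $\mathfrak{L}_n^\theta$, adjusting indices) with $L_n(\ell)\le L_n(\ell') + s_\theta M^{-n}$; combining with the bound on $L_n(\ell')$ and absorbing $s_\theta$ into the constant yields \eqref{u65}, possibly with a slightly enlarged $C_2$ — here one must be careful that the density set in Definition \ref{u91} is $M^{-2n}$-dense (finer than $M^{-n}$), which is exactly what makes the geometric error term in Fact \ref{u49} of order $M^{-n}$ rather than larger, so it is genuinely negligible against $C_2 M^{-n} n$. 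The main obstacle I anticipate is the one-step exponential estimate: one has to handle that the summands $|\ell'\cap K_{n-1}(\mathbf{x})|$ are of heterogeneous sizes (a slice can cut a cube in a short corner piece or a long near-diagonal piece), so a clean Hoeffding bound needs the worst-case bound $\sqrt2 M^{-(n-1)}$ on each piece while the mean uses the actual lengths; getting a contraction factor strictly controlling the exponential moment uniformly over all configurations of $\mathcal{E}_{n-1}(\ell')$ and all $\ell'$ is where the real work lies, and it is the reason the bound is $C_2 n M^{-n}$ rather than $O(M^{-n})$.
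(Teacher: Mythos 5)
Your proposal is correct in substance and reaches the result, but it combines the per-level estimates differently from the paper, so it is worth comparing the two. Both arguments share the same skeleton: discretize to $\mathfrak{L}_n^\theta$, exploit the conditional independence of the contributions $|\ell\cap E_n\cap K_{n-1}(\mathbf{x})|$ from distinct retained level-$(n-1)$ squares, apply a Hoeffding-type bound in which the variance proxy is controlled by $\sum_{\mathbf{x}}|\ell\cap K_{n-1}(\mathbf{x})|^2\le \sqrt{2}\,M^{-(n-1)}L_{n-1}(\ell)$ (this is exactly the computation \eqref{u43}, and it is also the resolution of the ``heterogeneous piece lengths'' obstacle you flag at the end), and finish with Borel--Cantelli over the $M^{4n}$ discretized lines plus the transfer of Fact \ref{u49}. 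The difference is in the middle step: the paper proves only a one-step conditional tail bound $\mathbb{P}(L_n>puL_{n-1}\mid L_{n-1}\ge R)<e^{-rM^{n-1}R}$ (Lemma \ref{u35}, Corollary \ref{u53}) and then runs a drift/dichotomy argument on the running maximum $a_n=\max_{\ell\in\mathfrak{L}_n^\theta}F_{n-1}(\ell)$ against the threshold $b_n\sim n$ (Lemma \ref{u57} and the subsequent cycle argument giving $a_k\le Mb_k$), whereas you iterate the exponential-moment bound $\mathbb{E}[e^{\lambda X_n}\mid\mathcal{F}_{n-1}]\le e^{\psi(\lambda)X_{n-1}}$ with $\psi(\lambda)=pM\lambda+c\lambda^2$ across all levels and then do a one-shot Chernoff plus union bound. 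Your route in fact gives something slightly stronger than you claim: since $0$ is an attracting fixed point of $\psi$ for $\lambda$ below $(1-pM)/c$, the tower property yields $\sup_n\mathbb{E}\bigl[e^{\lambda M^nL_n(\ell)}\bigr]<\infty$, not merely $e^{c(\lambda)n}$; the factor $n$ in the final bound then comes purely from beating the $M^{4n}$ cardinality of $\mathfrak{L}_n^\theta$, not from an ``additive accumulation of bad events.'' What your approach buys is a cleaner, self-contained concentration statement that dispenses with the $a_n$-versus-$b_n$ bookkeeping; what the paper's buys is a formulation (tail bound conditioned on $\{L_{n-1}\ge R\}$ plus a Lyapunov-type iteration) that transfers more directly to the arguments of Section 4, where full independence fails and one only controls subfamilies. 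One shared caveat: your contraction $\psi(\lambda)<\lambda$ and the paper's choice of $\varepsilon$ with $Mp(1+\varepsilon)<1$ both silently require $pM<1$, excluding the boundary case $p=M^{-1}$ allowed by \eqref{u3}; this is not a defect of your proposal relative to the paper.
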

Using that $\mathfrak{L}=\bigcup _{k=2}^{\infty }\mathfrak{L}^{1/k}$ we obtain that
\begin{corollary}
Almost surely, for all
$\ell\in \mathfrak{L} $
\begin{equation}\label{u37}
\exists N,\ \forall n\geq N,\quad  L_{n}(\ell)\leq C_2\cdot M^{-n}n.
\end{equation}

\end{corollary}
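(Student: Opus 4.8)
Since $\mathfrak{L}=\bigcup_{k\ge 2}\mathfrak{L}^{1/k}$, the corollary is immediate once Theorem \ref{u36} is available: intersect the countably many almost sure events produced by Theorem \ref{u36} for $\theta=1/k$, $k\ge 2$, and for a given $\ell\in\mathfrak{L}$ apply the estimate with any $k$ for which $\ell\in\mathfrak{L}^{1/k}$. So the real task is Theorem \ref{u36}, which I sketch now.

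Fix $0<\theta<\pi/4$ and write $W_n(\ell):=M^nL_n(\ell)$ for the normalized slice length. The engine is the statistical self-similarity of Remark \ref{u84}: if $K_1(\mathbf{x}_1),\dots,K_1(\mathbf{x}_m)$ are the level-$1$ cubes whose interiors meet $\ell$, then
\[
W_n(\ell)\;\overset{\mathsf{d}}{=}\;\sum_{i=1}^m \xi_i\, W_{n-1}^{(i)}(\ell_i),
\]
where $\xi_1,\dots,\xi_m$ are i.i.d.\ $\mathrm{Bernoulli}(p)$, the $W_{n-1}^{(i)}$ are i.i.d.\ copies of the process, and $\ell_i$ is the blow-up of $\ell\cap K_1(\mathbf{x}_i)$ to $K$. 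Two elementary geometric facts are used throughout: $m\le 2M-1$, and $\sum_i|\ell_i|=M|\ell|$ with each $|\ell_i|\le\sqrt 2$ --- i.e.\ there are $O(M)$ children but their total length is only $O(M)$ and each is bounded. Moreover \eqref{u90} gives $\mathbb{E}\bigl[W_n(\ell)\mid\mathcal{F}_{n-1}\bigr]=Mp\,W_{n-1}(\ell)$, so by the Principal Assumption $Mp\le 1$ each $\{W_n(\ell)\}_n$ is a nonnegative supermartingale; in particular $\mathbb{E}[W_n(\ell)]\le|\ell|\le\sqrt2$ for all $n$.

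The plan is the usual net-plus-Borel--Cantelli scheme. By Fact \ref{u49} it suffices to bound $L_{n-1}(\ell')$ uniformly over the finite net $\mathfrak{L}_n^\theta$ of Definition \ref{u91}, which has $\#\mathfrak{L}_n^\theta=M^{4n}$ elements, after which Fact \ref{u96} converts a length bound into the cube-count bound used later. So the crux is a tail estimate for $W_{n-1}(\ell')$, $\ell'\in\mathfrak{L}_n^\theta$, strong enough that multiplying by $M^{4n}$ still leaves a summable series; Borel--Cantelli then gives $\sup_{\ell'\in\mathfrak{L}_n^\theta}W_{n-1}(\ell')<cn$ eventually, and Fact \ref{u49} upgrades this to $\sup_{\ell\in\mathfrak{L}^\theta}W_{n-1}(\ell)<cn+s_\theta<C_2(n-1)$ for a suitable $C_2$ (this is how \eqref{v5} arises). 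To produce the tail estimate one iterates the recursion: control either the moment generating function $\phi_n(\ell)=\mathbb{E}[e^{\lambda W_n(\ell)}]=\prod_i\bigl(1-p+p\,\phi_{n-1}(\ell_i)\bigr)$ or the integer moments $\mathbb{E}[W_n(\ell)^k]$, looking for an ansatz that is \emph{linear} in $|\ell|$; the point of the geometric facts is precisely that the $\sum_i p(\cdots)$ in the recursion becomes $\le Mp\,(\cdots)\le(\cdots)$, which prevents geometric-in-$n$ blow-up and leaves at worst polynomial-in-$n$ growth of the moments.

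The hard part is that this is not, by itself, quite enough. Iterating the recursion only permits $\lambda=O(1/n)$ in the exponential moment, so the moment method yields at best $\mathbb{P}\bigl(W_{n-1}(\ell')\ge cn\bigr)=O(n^{-1})$ with a constant that decays in $c$, and $M^{4n}\cdot O(n^{-1})$ is \emph{not} summable. The reason is structural: along a slice the effective offspring mean equals $Mp\le 1$, so a fixed slice is a critical (when $p=M^{-1}$) or subcritical branching-type process --- it dies out almost surely and has bounded mean, but its running maximum is heavy-tailed in the critical case, and that tail alone cannot absorb the $M^{4n}$ lines of the net. Closing this gap is the essential difficulty, and I expect it to require more than a single-slice tail bound: either exploiting the strong positive correlation of the events $\{W_{n-1}(\ell')\ge cn\}$ across the massively overlapping net lines and across consecutive $n$ (so that ``some net line is bad at level $n$'' is far rarer than the union bound suggests); or using the supermartingale structure together with a maximal inequality, which controls all levels $n$ simultaneously; or a bootstrap, first extracting the crude bound $\sup_\ell W_n(\ell)\le M^{\varepsilon n}$ (immediate from the moment estimate and the net) and then feeding it back through the multi-level recursion $W_n(\ell)=\sum_{c\in\mathcal{E}_k(\ell)}W_{n-k}^{(c)}(\ell_c)$ to push the growth rate down to linear. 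Making the bound uniform over uncountably many lines with only linear loss in $n$ is where the real work lies.
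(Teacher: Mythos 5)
Your derivation of the corollary itself is exactly the paper's: apply Theorem \ref{u36} with $\theta=1/k$ for each $k\geq 2$ and intersect the countably many almost sure events. That part is correct and complete.

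Since you explicitly left Theorem \ref{u36} open, here is the point you are missing: the difficulty you describe is real, but it concerns a quantity the paper never estimates. The paper does not need a tail bound on the unconditional law of $F_{n-1}(\ell)=M^{n-1}L_{n-1}(\ell)$ (which, as you say, is too heavy-tailed in the critical case to survive a union bound over $M^{4n}$ net lines). Instead it bounds the \emph{conditional} probability of failing to contract in one step, given that the current value is already large. Conditionally on the level-$(n-1)$ configuration, $L_n(\ell)$ is a sum of independent variables $X_{\mathbf{x}}=|\ell\cap E_n\cap K_{n-1}(\mathbf{x})|$, each bounded by $b_{\mathbf{x}}=|\ell\cap K_{n-1}(\mathbf{x})|\leq \sqrt2\,M^{-(n-1)}$, with $\sum b_{\mathbf{x}}^2\leq \sqrt2\,M^{-(n-1)}L_{n-1}(\ell)$. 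Hoeffding's inequality then gives (Lemma \ref{u35}, Corollary \ref{u53})
\[
\mathbb{P}\bigl(F_n(\ell)>upM\,F_{n-1}(\ell)\ \big|\ F_{n-1}(\ell)\geq R\bigr)\leq \exp(-rR),
\]
so the exponent is proportional to the conditioning threshold $R$, not to $n$. Choosing $R=b_n=\frac{8\log M}{r}\,n$ makes each failure probability $M^{-8n}$, which beats the $M^{4n}$-element net at no cost other than enlarging the final constant $C_2$. Borel--Cantelli then yields: almost surely, for all large $n$ and every net line, either $F_{n-1}(\ell)\leq b_n$ or $F_n(\ell)<upM\,F_{n-1}(\ell)$ with $upM<1$. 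The rest is the deterministic argument of Lemma \ref{u57}: the running maximum $a_n$ over the net either sits below $b_n$ or contracts by a fixed factor $\lambda<1$ at each step (the additive error $s_\theta$ from Fact \ref{u49} being absorbed since $b_n\to\infty$), so once it first drops below $b_n$ it can never again exceed $Mb_n$. None of your three proposed repairs (correlations across net lines, a maximal inequality, a multi-level bootstrap) is used or needed; the single observation that the one-step Hoeffding exponent is comparable to $F_{n-1}$ itself, and hence is at your disposal via the threshold $b_n$, is what closes the gap.
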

Using Fact \ref{u96} we get
\begin{corollary}\label{u66}
For almost all realizations of $E$  we have
\begin{equation}\label{v6}
\forall \theta \in \left(0,\frac{\pi }{4}\right),\
\exists N,\ \forall n\geq N,\ \forall \ell\in \mathfrak{L}^\theta; \quad
\#\mathcal{E}_n(\ell)\leq 6C_2n.
\end{equation}

\end{corollary}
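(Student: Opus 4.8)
The final statement, Corollary \ref{u66}, follows at once from \eqref{u65} of Theorem \ref{u36} together with \eqref{u95} of Fact \ref{u96}: the two auxiliary lines $\ell^u,\ell^l$ are parallel to $\ell$, so they have the same argument and (once $n$ is large) also obey the length bound, whence $\#\mathcal{E}_n(\ell)\le 2M^n\cdot 3C_2M^{-n}n=6C_2n$. So the task is to prove Theorem \ref{u36}. The plan is to establish a single‑line tail estimate strong enough to survive a union bound over the net $\mathfrak{L}^\theta_n$, and then to interpolate to all $\ell\in\mathfrak{L}^\theta$ by Fact \ref{u49}. Write $Y_n(\ell):=M^nL_n(\ell)$. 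The key step will be a bound on the exponential moments that is \emph{uniform in} $n$ (and in $\ell$): there are $s=s(M,p)>0$ and $C<\infty$ with $\mathbb{E}[\exp(sY_n(\ell))]\le C$ for every $n$ and every chord $\ell\subset K$. Granting this, Markov's inequality gives $\mathbb{P}(Y_n(\ell)\ge\lambda)\le Ce^{-s\lambda}$; choosing $C_2$ large enough that $M^{4}e^{-sC_2}<1$ (this is the constant of \eqref{v5}) and using $\#\mathfrak{L}^\theta_{n+1}=M^{4(n+1)}$ from \eqref{u92}, one has $\sum_n\#\mathfrak{L}^\theta_{n+1}\,\mathbb{P}\big(Y_n(\ell')\ge C_2n-s_\theta\big)\le CM^{4}e^{ss_\theta}\sum_n(M^{4}e^{-sC_2})^n<\infty$. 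Borel–Cantelli then produces, almost surely, a threshold $N=N(\theta,\omega)$ beyond which $L_n(\ell')<(C_2n-s_\theta)M^{-n}$ for every $\ell'\in\mathfrak{L}^\theta_{n+1}$, and Fact \ref{u49} promotes this to $L_n(\ell)<C_2M^{-n}n$ for all $\ell\in\mathfrak{L}^\theta$, which is \eqref{u65} (summing over $\theta=1/k$ then gives \eqref{u37}).

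The uniform exponential‑moment bound I would prove by induction on $n$ from the self‑similar structure of Remark \ref{u84}. If $K_1(\mathbf{x}_1),\dots,K_1(\mathbf{x}_m)$ are the level‑$1$ cubes met by $\ell$, the construction yields the distributional identity
\begin{equation*}
Y_n(\ell)=\sum_{j=1}^{m}B_j\,Y^{(j)}_{n-1}(\tilde\ell_j),
\end{equation*}
where the $B_j$ are i.i.d.\ $\mathrm{Bernoulli}(p)$ retention indicators of the $K_1(\mathbf{x}_j)$, the $Y^{(j)}_{n-1}$ are independent copies of $Y_{n-1}$ by parts (a)–(b) of Remark \ref{u84}, and $\tilde\ell_j$ is the rescaled image of $\ell\cap K_1(\mathbf{x}_j)$, with $|\tilde\ell_j|=M|\ell\cap K_1(\mathbf{x}_j)|\le\sqrt2$ and $\sum_j|\tilde\ell_j|=M|\ell|$ — this is also where \eqref{u90} comes from. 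Assuming inductively that $\mathbb{E}[\exp(sY_{n-1}(\ell''))]\le\exp(\beta_{n-1}|\ell''|)$, one gets
\begin{equation*}
\mathbb{E}\big[\exp(sY_n(\ell))\big]=\prod_{j=1}^{m}\big(1-p+p\,\mathbb{E}[\exp(sY_{n-1}(\tilde\ell_j))]\big)\le\prod_{j=1}^{m}\big(1-p+p\,\exp(\beta_{n-1}|\tilde\ell_j|)\big),
\end{equation*}
and, using $e^{x}-1\le x(e^{\gamma}-1)/\gamma$ on $[0,\gamma]$ with $\gamma:=\sqrt2\,\beta_{n-1}$ (valid since $\beta_{n-1}|\tilde\ell_j|\le\gamma$), the estimate $1-p+pe^{a}\le\exp\!\big(pa(e^\gamma-1)/\gamma\big)$, the identity $\sum_j|\tilde\ell_j|=M|\ell|$, and $Mp<1$, this is $\le\exp(\beta_n|\ell|)$ with $\sqrt2\,\beta_n\le Mp\,(e^{\sqrt2\beta_{n-1}}-1)$. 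The map $\gamma\mapsto Mp(e^\gamma-1)$ has $0$ as an attracting fixed point (its derivative there is $Mp<1$), so if $s$ is chosen small enough that $\sqrt2\,s$ lies below the repelling fixed point, then $\sqrt2\,\beta_n\le\sqrt2\,s$ for all $n$; hence $\mathbb{E}[\exp(sY_n(\ell))]\le e^{s|\ell|}\le e^{s\sqrt2}$, as needed.

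The crux — and essentially the only real difficulty — is making this recursion close. The first moment is harmless, $\mathbb{E}[Y_n(\ell)]=(Mp)^n|\ell|\le\sqrt2$, the point being that although a line can meet about $2M$ level‑$1$ cubes, the rescaled pieces $\tilde\ell_j$ have \emph{total} length only $M|\ell|$, not $\sim 2M$; the computation above is arranged so that this cancellation persists at the level of the exponential moment (through the length‑weighting $\beta_{n-1}|\tilde\ell_j|$), for otherwise the factor $2Mp$ would destroy the contraction. The second delicate point is the strictness $Mp<1$ in \eqref{u3}: at $p=M^{-1}$ the process $Y_n(\ell)$ is a genuine, heavy‑tailed martingale, the fixed point at $0$ ceases to be attracting, and $s(M,p)\to0$, $C_2\to\infty$ as $p\uparrow M^{-1}$, so that borderline value of $p$ should be excluded here (or handled by a separate, more careful estimate). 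Finally note that the exponential‑moment bound uses nothing about the direction of $\ell$ — consistent with the remark that Theorem \ref{u36} also holds for horizontal and vertical lines; the angle hypothesis $\ell\in\mathfrak{L}^\theta$ enters only through the geometric Facts \ref{u96} and \ref{u49} and through the countability $\mathfrak{L}=\bigcup_k\mathfrak{L}^{1/k}$.
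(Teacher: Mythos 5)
Your deduction of Corollary \ref{u66} itself is exactly the paper's: combine the uniform slice-length bound \eqref{u65}, applied to $\ell$, $\ell^u$ and $\ell^l$, with \eqref{u95} to get $\#\mathcal{E}_n(\ell)\le 6C_2 n$. Where you diverge is in the proof of the underlying Theorem \ref{u36}, and your route is genuinely different and, as far as I can check, sound. The paper conditions on level $n-1$ and applies Azuma--Hoeffding to get $F_n<pMu\,F_{n-1}$ unless $F_{n-1}\le b_n\sim n$ (Corollary \ref{u53}), and then runs a deterministic dichotomy (Lemma \ref{u57}): the maximum $a_n$ of $F_{n-1}$ over the net either sits below the threshold $b_n$ or contracts by a factor $\lambda<1$, which traps $a_n$ at $O(n)$. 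You instead prove an unconditional, $n$-uniform bound $\mathbb{E}[\exp(sY_n(\ell))]\le e^{s\sqrt2}$ by closing a recursion on the moment generating function through the level-$1$ self-similar decomposition $Y_n(\ell)=\sum_j B_jY^{(j)}_{n-1}(\tilde\ell_j)$; the essential point, which you correctly isolate, is that the total rescaled length $\sum_j|\tilde\ell_j|=M|\ell|$ (rather than $\sim 2M$) is what makes the iteration $\sqrt2\,\beta_n=Mp\bigl(e^{\sqrt2\beta_{n-1}}-1\bigr)$ stay below its repelling fixed point when $Mp<1$. Markov's inequality, a union bound over the $M^{4(n+1)}$-element net, Borel--Cantelli and Fact \ref{u49} then give \eqref{u65} directly, with the linear-in-$n$ threshold appearing transparently as $\log(\#\mathfrak{L}^\theta_{n+1})/s$; your constant differs from the $C_2$ of \eqref{v5}, which is immaterial. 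What each approach buys: yours is more self-contained (no two-stage ``contract or already small'' bookkeeping) and quantifies the tail of a single slice uniformly in $n$; the paper's needs only a one-step conditional concentration bound and never controls the full moment generating function. The caveat you raise --- that the argument needs $Mp<1$ strictly, while the Principal Assumption \eqref{u3} admits $p=M^{-1}$ --- applies verbatim to the paper's own proof (its choice of $\varepsilon$ with $Mp(1+\varepsilon)<1$ and the contraction $\lambda=pM(1+2\varepsilon/3)<1$ likewise fail at $p=M^{-1}$), so it is not a defect of your approach relative to the paper's.
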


To prove Theorem \ref{u36}  we apply a version of  Azuma-Hoeffding inequality to estimate $L_{n}^{\alpha }(x)$.

\subsection{Large deviation estimate for $L_{n}(\ell) $}\label{u45}
An immediate reformulation of the Azuma-Hoeffding inequality \cite[Theorem 2]{Hoeffding1963} yields:
\begin{theorem}[Hoeffding]\label{u40}
Let $X_1,\dots ,X_m$ be independent bounded random variables with $a_i\leq X_i\leq b_i$, ($i=1,\dots ,m$). Then for any $t>0$
\begin{equation}\label{u41}
  \mathbb{P}\left(X_1+\cdots +X_m-\mathbb{E}\left[X_1+\cdots +X_m\right]\geq t\right)
\leq
\exp\left(\frac{-2t^2}{\sum\limits_{i=1}^{m}(b_i-a_i)^2}\right).
\end{equation}

\end{theorem}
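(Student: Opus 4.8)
The plan is to prove this by the exponential-moment (Chernoff) method, which reduces the tail bound to a one-variable estimate on the moment generating function of each summand. First I would center the variables: set $Y_i := X_i - \mathbb{E}[X_i]$, so that $\mathbb{E}[Y_i]=0$, the $Y_i$ remain independent, and each $Y_i$ takes values in an interval of length $b_i - a_i$. Writing $S := \sum_{i=1}^m Y_i$, the left-hand side of \eqref{u41} is exactly $\mathbb{P}(S \geq t)$. For any $s>0$, monotonicity of $x \mapsto e^{sx}$ together with Markov's inequality gives
\begin{equation}
\mathbb{P}(S \geq t) = \mathbb{P}\left(e^{sS} \geq e^{st}\right) \leq e^{-st}\, \mathbb{E}\left[e^{sS}\right],
\end{equation}
and by independence $\mathbb{E}[e^{sS}] = \prod_{i=1}^m \mathbb{E}[e^{sY_i}]$, so it suffices to bound each factor separately.

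The key step, and the one I expect to be the main obstacle, is Hoeffding's lemma: if $Y$ is a mean-zero random variable with $Y \in [a,b]$ almost surely, then $\mathbb{E}[e^{sY}] \leq \exp\!\left(s^2(b-a)^2/8\right)$ for every real $s$. I would establish this by analyzing the log-moment generating function $\psi(s) := \log \mathbb{E}[e^{sY}]$. A direct computation shows $\psi(0)=0$ and $\psi'(0)=\mathbb{E}[Y]=0$, while $\psi''(s)$ equals the variance of $Y$ under the exponentially tilted probability measure $\mathrm{d}\mathbb{Q}_s \propto e^{sY}\,\mathrm{d}\mathbb{P}$. Since $\mathbb{Q}_s$ is again supported in $[a,b]$, and the variance of any distribution on $[a,b]$ is maximized by the two-point law placing equal mass at the endpoints, one obtains $\psi''(s) \leq (b-a)^2/4$ uniformly in $s$. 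Taylor's theorem with remainder then yields $\psi(s) \leq s^2(b-a)^2/8$, which is the claim. (Alternatively one can avoid calculus entirely, using convexity of $y \mapsto e^{sy}$ to write $e^{sY} \leq \frac{b-Y}{b-a}e^{sa} + \frac{Y-a}{b-a}e^{sb}$, taking expectations, and optimizing the resulting elementary bound; I expect the tilted-measure argument to be the cleaner one to present.)

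Applying the lemma to each $Y_i$ (with endpoints $a_i - \mathbb{E}[X_i]$ and $b_i - \mathbb{E}[X_i]$, whose difference is $b_i - a_i$) and feeding the result back into the Chernoff bound gives, for every $s>0$,
\begin{equation}
\mathbb{P}(S \geq t) \leq \exp\!\left(-st + \frac{s^2}{8}\sum_{i=1}^m (b_i - a_i)^2\right).
\end{equation}
It remains only to optimize the right-hand side over $s>0$. Writing $D := \sum_{i=1}^m (b_i-a_i)^2$, the exponent $-st + s^2 D/8$ is a convex parabola in $s$ minimized at $s = 4t/D > 0$, where it attains the value $-2t^2/D$. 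Substituting this choice of $s$ produces precisely the bound \eqref{u41}, completing the proof.
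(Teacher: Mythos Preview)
Your proof is correct and follows the standard Chernoff--Hoeffding argument. Note, however, that the paper does not actually prove this theorem: it is stated as a quoted result, introduced as ``an immediate reformulation of the Azuma--Hoeffding inequality \cite[Theorem~2]{Hoeffding1963}'', and used as a black box in the subsequent large-deviation estimate for $L_n(\ell)$. So there is no proof in the paper to compare against; you have supplied a complete self-contained argument where the paper is content to cite the literature.
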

We apply this to prove:
\begin{lemma}\label{u35}
For every $u>1$
 there is a constant $r=r(u)>0$ such that for every
$n\geq 1$, $\ell\in \mathfrak{L}$ and $0<R<|\ell|$,
\begin{equation}\label{u39}
  \mathbb{P}\left(L_{n}(\ell) >pL_{n-1}(\ell)\cdot u| L_{n-1}(\ell)\geq R \right)
<
\exp\left(
-r M^{(n-1)}R
\right)
\end{equation}
\end{lemma}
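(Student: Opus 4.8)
The plan is to condition on the configuration of retained level-$(n-1)$ cubes, i.e.\ on $\mathcal{F}_{n-1}$, and to recognise $L_n(\ell)$ as a sum of independent bounded random variables to which the Azuma--Hoeffding inequality (Theorem \ref{u40}) applies; the decay rate $M^{n-1}R$ will come out of the elementary estimate $\sum_i x_i^2 \le (\max_i x_i)(\sum_i x_i)$.

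Fix $\ell \in \mathfrak{L}$ and $n \ge 1$, and let $\mathcal{D}$ denote the (random, $\mathcal{F}_{n-1}$-measurable) collection of all $\mathbf{y} \in \mathcal{N}_n$ with $K_n(\mathbf{y}) \subset K_{n-1}(\mathbf{x})$ for some $\mathbf{x} \in \mathcal{E}_{n-1}$ and $|\ell \cap K_n(\mathbf{y})| > 0$. Conditionally on $\mathcal{F}_{n-1}$, the events $\{\mathbf{y} \in \mathcal{E}_n\}$, $\mathbf{y} \in \mathcal{D}$, are independent of probability $p$, so
\[
L_n(\ell) = \sum_{\mathbf{y} \in \mathcal{D}} X_{\mathbf{y}}, \qquad X_{\mathbf{y}} := |\ell \cap K_n(\mathbf{y})|\, \mathbf{1}\{\mathbf{y} \in \mathcal{E}_n\},
\]
is, given $\mathcal{F}_{n-1}$, a sum of independent random variables with $0 \le X_{\mathbf{y}} \le |\ell \cap K_n(\mathbf{y})| \le \sqrt{2}\,M^{-n}$ (the diagonal of a level-$n$ cube). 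Since the cubes $K_n(\mathbf{y})$ with $K_n(\mathbf{y}) \subset K_{n-1}(\mathbf{x})$ tile $K_{n-1}(\mathbf{x})$, we have $\sum_{\mathbf{y} \in \mathcal{D}} |\ell \cap K_n(\mathbf{y})| = \sum_{\mathbf{x} \in \mathcal{E}_{n-1}} |\ell \cap K_{n-1}(\mathbf{x})| = L_{n-1}(\ell)$, whence $\mathbb{E}[L_n(\ell) \mid \mathcal{F}_{n-1}] = p\,L_{n-1}(\ell)$ (consistently with \eqref{u90}), and also $\sum_{\mathbf{y} \in \mathcal{D}} |\ell \cap K_n(\mathbf{y})|^2 \le \big(\max_{\mathbf{y} \in \mathcal{D}} |\ell \cap K_n(\mathbf{y})|\big) \sum_{\mathbf{y} \in \mathcal{D}} |\ell \cap K_n(\mathbf{y})| \le \sqrt{2}\,M^{-n}\,L_{n-1}(\ell)$.

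Now I would apply Theorem \ref{u40} conditionally on $\mathcal{F}_{n-1}$, on the $\mathcal{F}_{n-1}$-measurable event $\{L_{n-1}(\ell) \ge R\}$, with $t := (u-1)p\,L_{n-1}(\ell) > 0$: by \eqref{u41} and the sum-of-squares bound above,
\[
\mathbb{P}\big(L_n(\ell) > u p\,L_{n-1}(\ell) \,\big|\, \mathcal{F}_{n-1}\big) \le \exp\!\left(\frac{-2(u-1)^2 p^2\, L_{n-1}(\ell)^2}{\sqrt{2}\,M^{-n}\,L_{n-1}(\ell)}\right) = \exp\!\left(-\sqrt{2}\,(u-1)^2 p^2\, M^n L_{n-1}(\ell)\right)
\]
a.s.\ on that event. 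Since $M \ge 2$ we have $M^n L_{n-1}(\ell) \ge 2 M^{n-1} R > M^{n-1}R$ there, so the right-hand side is strictly smaller than $\exp(-r(u)\,M^{n-1}R)$ with $r(u) := \sqrt{2}\,(u-1)^2 p^2 > 0$. Taking expectations of this bound over the event $\{L_{n-1}(\ell) \ge R\} \in \mathcal{F}_{n-1}$ and dividing by $\mathbb{P}(L_{n-1}(\ell) \ge R)$ yields \eqref{u39} (if that probability is $0$ there is nothing to prove).

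I do not expect a genuine obstacle; the only points requiring care are (i) phrasing the conditioning so that the per-subcube increments are genuinely independent given $\mathcal{F}_{n-1}$, and (ii) the observation that it is precisely the bound $\sum_i x_i^2 \le (\max_i x_i)(\sum_i x_i)$ — exploiting that $\sum_i x_i = L_{n-1}(\ell)$ is itself the quantity being estimated — that keeps the Hoeffding exponent linear in $L_{n-1}(\ell)$, and hence delivers the decay rate $M^{n-1}R$ rather than something weaker.
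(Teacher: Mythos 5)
Your proof is correct and follows essentially the same route as the paper's: condition on the level-$(n-1)$ configuration (the paper conditions on the exact set $\texttt{N}$ of retained level-$(n-1)$ squares meeting $\ell$ and unions over all admissible $\texttt{N}$, which is equivalent to your conditioning on $\mathcal{F}_{n-1}$ and integrating over $\{L_{n-1}(\ell)\ge R\}$), then apply Hoeffding with $t=(u-1)pL_{n-1}(\ell)$ and the bound $\sum_i x_i^2\le(\max_i x_i)\sum_i x_i$ to keep the exponent linear in $L_{n-1}(\ell)$. The only difference is that you sum over individual level-$n$ cells while the paper groups them by their parent level-$(n-1)$ square (so its independent variables are $|\ell\cap E_n\cap K_{n-1}(\mathbf{x})|$, bounded by $\sqrt{2}M^{-(n-1)}$), which costs a factor $M$ in the exponent but yields the same constant $r=\sqrt{2}(u-1)^2p^2$ as stated.
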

\begin{proof}
Fix an arbitrary $u>1$, $n$, $\ell\in \mathfrak{L}$ and $0<R< |\ell|$. Let
\begin{equation}\label{u44}
  r=\sqrt[]{2}\cdot (u-1)^2p^2.
\end{equation}
We write $\pmb{\mathfrak{N}_{n-1}(\ell)}$ for the collection of all $\texttt{N}\subset \mathcal{N}_{n-1}$ satisfying
\begin{description}
  \item[(a)] $\forall \mathbf{x}\in \texttt{N}$, we have $ \ell\cap \texttt{int}(K_{n-1}(\mathbf{x}))\ne\emptyset $ and
  \item[(b)] $ \sum\limits_{\mathbf{x}\in \texttt{N}}\left| \ell\cap \texttt{int}(K_{n-1}(\mathbf{x}))\right|\geq R$.
\end{description}
For an $\texttt{N}\in \mathfrak{N}_{n-1}(\ell)$ let
$$
\widetilde{\texttt{N}}\mbox{ be the event that }
\texttt{N}=\left\{\mathbf{x}\in \mathcal{E}_{n-1}:\ \texttt{int} (K_{n-1}(\mathbf{x}))\cap \ell\ne\emptyset \right\}.
$$
Note that
$$
\left\{L_{n-1}(\ell)\geq R\right\}=\bigcup_{\texttt{N}\in \mathfrak{N}_{n-1}(\ell)} \widetilde{\texttt{N}}
$$

with disjoint union. Hence to verify \eqref{u39} it is enough to prove that
\begin{equation}\label{v7}
\forall \texttt{N}\in \mathfrak{N}_{n-1}(\ell),\
  \mathbb{P}\left(
  L_n(\ell)>pL_{n-1}(\ell)\cdot u|\widetilde{\texttt{N}}
  \right)<\exp\left(-rM^{n-1}R\right).
\end{equation}
Fix an arbitrary $\texttt{N}\in \mathfrak{N}_{n-1}(\ell)$
and set
$$
\pmb{\widetilde{\mathbb{P}}}(\cdot):=\mathbb{P}\left(\cdot |\widetilde{\texttt{N}}\right).
$$
Clearly, $L_{n-1}(\ell)$ is deterministic on $\widetilde{\texttt{N}}$. By definition,
\begin{equation}\label{u89}
  L_{n-1}(\ell)=
\sum\limits_{\mathbf{x}\in \texttt{N}}
|\ell\cap K_{n-1}(\mathbf{x})|\geq R.
\end{equation}
With this notation (\ref{v7}) is of the form:
\begin{equation}\label{u42}
 \widetilde{ \mathbb{P}}\left(L_{n}(\ell) >puL_{n-1}(\ell)\right)
<
\exp\left(
-r M^{n-1}R
\right).
\end{equation}
  Let $\left\{X_\mathbf{x}\right\}_{\mathbf{x}\in \texttt{N}}$ be independent random variables on $\widetilde{\texttt{N}}$ with

$X_\mathbf{x}\stackrel{d}{=}\left|\ell \cap E_n\cap K_{n-1}(\mathbf{x})\right|$.
Then by (\ref{u90})
$$
L_{n}(\ell)-p\cdot L_{n-1}(\ell)=
\sum\limits_{\mathbf{x}\in \texttt{N}}
\left(X_\mathbf{x}-\mathbb{E}\left[X_\mathbf{x}\right]\right).
$$
We will apply Theorem \ref{u40} for the random variables $X_\mathbf{x}$.
Using the notation of   Theorem \ref{u40} , observe that
$$a_\mathbf{x}:=0\leq X_\mathbf{x}\leq |\ell\cap K_{n-1}(\mathbf{x})|=:b_\mathbf{x}.$$ The sum on the right hand side of the formulae (\ref{u41}) satisfies
\begin{eqnarray}   \label{u43}
  \sum\limits_{\mathbf{x}\in \texttt{N}}
  (b_\mathbf{x}-a_\mathbf{x})^2&\leq& \sum\limits_{\mathbf{x}\in \texttt{N}}|\ell\cap K_{n-1}(\mathbf{x})|^2
   \\
 \nonumber  &= &
  2M^{-2(n-1)}
   \sum\limits_{\mathbf{x}\in \texttt{N}}
   \left(
   \frac{M^{n-1}}{\sqrt{2}}\left|\ell\cap K_{n-1}(\mathbf{x})\right|
   \right)^2
  \\
 \nonumber  &\leq &
 \sqrt[]{2}\cdot M^{-(n-1)} \sum\limits_{\mathbf{x}\in \texttt{N}}|\ell\cap K_{n-1}(\mathbf{x})|
 \\
\nonumber   &=& \sqrt[]{2}M^{-(n-1)}L_{n-1}(\ell),
\end{eqnarray}
where in the one but last step we used that all the summands are smaller than or equal to $1$.
Using this and Theorem \ref{u40} for $t=(u-1)pL_{n-1}(\ell) $ on the space $(\widetilde{\texttt{N}},\widetilde{\mathbb{P}})$ we obtain  that
\begin{eqnarray*}
\widetilde{ \mathbb{P}}\left(L_{n}(\ell) >
pL_{n-1}(\ell)\cdot u \right)  &=& \widetilde{\mathbb{P}}
\left(\sum\limits_{\mathbf{x}\in O}
\left(X_\mathbf{x}-\mathbb{E}\left[X_\mathbf{x}\right]\right)
>t
\right) \\
   &\leq & \exp\left(\frac{-2t^2}{\sum\limits_{i=1}^{m}(b_i-a_i)^2}\right) \\
   &\leq &  \exp\left(
-rM^{n-1} L_{n-1}
\right)\\
&\leq &
\exp\left(-rM^{n-1} R\right),
\end{eqnarray*}
where in the last step we used (\ref{u89}).
 So, (\ref{u42}) holds which implies the assertion of the Lemma.
\end{proof}

We use this Lemma for a re-scaled version of $L_{n}(\ell) $. Namely, let
$$
F_{n}(\ell) :=L_{n}(\ell) \cdot M^{n}.
$$
Then we can reformulate (\ref{u39}) as follows:
\begin{corollary}\label{u53}
For an $u>1$ let $r=r(u)=\sqrt[]{2}\cdot (u-1)^2p^2$. For all $\ell\in \mathfrak{L}$, and
\begin{equation}\label{u56}
  0<R_n\leq M^{n-1}|\ell|,
\end{equation}
 we have
\begin{equation}\label{u46}
  \mathbb{P}\left(F_{n}(\ell) >pMF_{n-1}(\ell)\cdot u|F_{n-1}(\ell)\geq R_n  \right)
<
\exp\left(-rR_n \right).
\end{equation}
\end{corollary}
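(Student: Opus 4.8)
The plan is to obtain Corollary \ref{u53} as a direct restatement of Lemma \ref{u35} under the rescaling $F_k(\ell)=L_k(\ell)M^k$, via the substitution $R:=R_nM^{-(n-1)}$. First I would record the dictionary between the two normalizations. Since $L_k(\ell)=F_k(\ell)M^{-k}$ for every $k$, multiplying the inequality defining the event $\{F_n(\ell)>pMF_{n-1}(\ell)u\}$ through by $M^{-n}$ shows that this event is exactly $\{L_n(\ell)>pL_{n-1}(\ell)u\}$; similarly the conditioning event $\{F_{n-1}(\ell)\geq R_n\}$ is exactly $\{L_{n-1}(\ell)\geq R_nM^{-(n-1)}\}$. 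Hence, with $R:=R_nM^{-(n-1)}$, the left-hand side of \eqref{u46} coincides verbatim with the left-hand side of \eqref{u39}.

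Next I would check admissibility of this $R$ for Lemma \ref{u35}, i.e.\ that $0<R<|\ell|$. The lower bound is immediate from $R_n>0$, and $R<|\ell|$ is equivalent to $R_n<M^{n-1}|\ell|$, which is the hypothesis \eqref{u56} save for the single boundary value $R_n=M^{n-1}|\ell|$. Plugging $R=R_nM^{-(n-1)}$ into the conclusion of Lemma \ref{u35} then yields the bound $\exp(-rM^{n-1}R)=\exp(-rR_n)$, which is precisely \eqref{u46}; the constant $r=r(u)=\sqrt{2}(u-1)^2p^2$ is unchanged because it does not depend on $R$.

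There is essentially no genuine obstacle here: the statement is a pure reformulation of Lemma \ref{u35}. The only point needing a word of care is the degenerate endpoint $R_n=M^{n-1}|\ell|$, where $R=|\ell|$ is not covered by the hypothesis of Lemma \ref{u35}; on this event the conditioning forces $L_{n-1}(\ell)=|\ell|$ (since always $L_{n-1}(\ell)\leq|\ell|$), and one disposes of it either by applying Lemma \ref{u35} with $R=|\ell|-\varepsilon$ and letting $\varepsilon\downarrow 0$ (the right-hand side $\exp(-rM^{n-1}R)$ being continuous in $R$), or by noting that $L_{n-1}(\ell)$ is then deterministic and rerunning the Azuma--Hoeffding step from the proof of Lemma \ref{u35} unchanged. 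This completes the reduction.
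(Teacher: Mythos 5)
Your proposal is correct and follows exactly the route the paper intends: the paper presents Corollary \ref{u53} as an immediate reformulation of Lemma \ref{u35} under the rescaling $F_k(\ell)=L_k(\ell)M^k$ with $R=R_nM^{-(n-1)}$, giving no further argument. Your extra care at the boundary value $R_n=M^{n-1}|\ell|$ is a harmless refinement of the same computation.
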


\subsection{The proof of Theorem \ref{u36} }
Now we prove Theorem \ref{u36} using the large deviation estimate of Corollary \ref{u53}.

Fix an arbitrary $0<\theta  <\frac{\pi }{4}$ for this Section and let $0<\varepsilon <\min\left\{p,\frac{1}{10}\right\}$ such that $Mp(1+\varepsilon )<1$ .
 We will use Corollary \ref{u53} with
\begin{equation}\label{v4}
  u=1+\varepsilon /3\mbox{ that is } r=\sqrt[]{2}p^2\varepsilon ^2/9.
\end{equation}
Set
\begin{equation}\label{u54}
  \pmb{a_n}:=\max\limits_{\ell\in \mathfrak{L}_n^{\theta  }}F_{n-1}(\ell),\
 \pmb{ b_n}:=\frac{8\log M}{r}\cdot n,
\end{equation}
where $\mathcal{L}_{n}^{\theta }$ was defined in Definition \ref{u96}.
The reason for this particular choice of  $b_n$ is to  ensure that
\begin{equation}\label{u97}
  M^{4}\cdot \exp\left(-rb_n/n\right)<1
\end{equation}
which we will need later to apply Borel-Cantelli Lemma.
Clearly,
\begin{equation}\label{u55}
  a_{k+1}\leq M\cdot a_k \mbox{ and } b_{k}< b_{k+1}.
\end{equation}

 Now we prove that
\begin{lemma}\label{u57}
For almost all realizations there exists an $N_0$ (which depends on the realization) such that
\begin{equation}\label{u58}
  \forall n\geq N_0,\ \mbox{ either } a_n\leq b_n \mbox{ or } a_{n+1}\leq \lambda a_n,
\end{equation}
where $\lambda =pM(1+\frac{2\varepsilon }{3})<1$.
\end{lemma}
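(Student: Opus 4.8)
The plan is to apply the large deviation bound of Corollary \ref{u53} to every line in the finite family $\mathfrak{L}_n^\theta$ simultaneously, and then upgrade to \emph{all} lines in $\mathfrak{L}^\theta$ by the $M^{-2n}$-density of $\mathfrak{L}_n^\theta$ together with Fact \ref{u49}. First I would fix $n$ and work line by line. For a fixed $\ell' \in \mathfrak{L}_n^\theta$ consider $F_{n}(\ell') = L_{n}(\ell')M^{n}$ and its predecessor $F_{n-1}(\ell')$. The dichotomy is forced by the threshold $b_n$: if $F_{n-1}(\ell') \le b_n$ we are in the first alternative; otherwise $F_{n-1}(\ell') > b_n \ge R_n$, and we may invoke \eqref{u46} with $R_n = b_n$, $u = 1+\varepsilon/3$. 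On the event $\{F_{n-1}(\ell') > b_n\}$ the conditional probability that $F_{n}(\ell') > pM(1+\varepsilon/3)F_{n-1}(\ell')$ is at most $\exp(-rb_n) = \exp(-r\cdot\frac{8\log M}{r}n) = M^{-8n}$.

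Next I would take the union bound over the $\#\mathfrak{L}_n^\theta = M^{4n}$ lines in $\mathfrak{L}_n^\theta$. This gives that, apart from an event of probability at most $M^{4n}\cdot M^{-8n} = M^{-4n}$, for every $\ell' \in \mathfrak{L}_n^\theta$ we have either $F_{n-1}(\ell') \le b_n$ or $F_{n}(\ell') \le pM(1+\varepsilon/3)F_{n-1}(\ell')$. Since $\sum_n M^{-4n} < \infty$, the Borel–Cantelli lemma (this is the role of \eqref{u97}, or rather its strengthened form here) yields an almost sure $N_0$ beyond which the good event holds for all $n$. So for $n \ge N_0$ and every $\ell' \in \mathfrak{L}_n^\theta$: either $F_{n-1}(\ell') \le b_n$, or $F_n(\ell') \le pM(1+\tfrac{\varepsilon}{3})F_{n-1}(\ell')$.

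It remains to pass from the grid lines to arbitrary $\ell \in \mathfrak{L}^\theta$ and to extract the statement about $a_n = \max_{\ell'\in\mathfrak{L}_n^\theta}F_{n-1}(\ell')$. Let $\ell^*$ be a line attaining $a_{n+1} = \max_{\ell'\in\mathfrak{L}_{n+1}^\theta}F_{n}(\ell^*)$. Applying the good event at level $n+1$ to $\ell^* \in \mathfrak{L}_{n+1}^\theta$: either $F_n(\ell^*) \le b_{n+1}$, or $F_{n+1}(\ell^*) \le \dots$ — but this controls $F_{n+1}$, not $F_n$, so instead I should organize the induction around the quantity $F_{n-1}$ directly. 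The cleaner route: apply the good event at level $n+1$, which compares $F_{n+1}(\ell')$ to $F_n(\ell')$; reindex so that we compare $F_n$ to $F_{n-1}$ by using the good event at level $n$. Concretely, take $\ell^* \in \mathfrak{L}_{n+1}^\theta$ with $F_n(\ell^*) = a_{n+1}$; by Fact \ref{u49} applied at level $n$ there is $\ell' \in \mathfrak{L}_n^\theta$ with $L_{n-1}(\ell^*) \le L_{n-1}(\ell') + s_\theta M^{-(n-1)}$, hence $F_{n-1}(\ell^*) \le F_{n-1}(\ell') + s_\theta \le a_n + s_\theta$. Now if $a_n > b_n$ then, assuming $N_0$ is large enough that $b_n > s_\theta/\varepsilon$ or similar, we get $F_{n-1}(\ell^*) > b_n \ge R_n$, so the good event at level $n$ (for line $\ell^*$, which lies in $\mathfrak{L}_n^\theta \subset \mathfrak{L}_{n+1}^\theta$) forces $F_n(\ell^*) \le pM(1+\tfrac{\varepsilon}{3})F_{n-1}(\ell^*) \le pM(1+\tfrac{\varepsilon}{3})(a_n + s_\theta) \le pM(1+\tfrac{2\varepsilon}{3})a_n = \lambda a_n$, where the last inequality holds once $a_n$ (equivalently $b_n$, equivalently $n$) is large enough to absorb the additive $s_\theta$ error into the gap between $1+\varepsilon/3$ and $1+2\varepsilon/3$. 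Since $F_n(\ell^*) = a_{n+1}$, this is exactly \eqref{u58}.

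The main obstacle is the bookkeeping in this last step: one must be careful that the line $\ell^*$ realizing $a_{n+1}$ actually belongs to a family to which the level-$n$ estimate applies (it does, since $\mathfrak{L}_n^\theta \subset \mathfrak{L}_{n+1}^\theta$ and one can replace $\ell^*$ by a nearby $\mathfrak{L}_n^\theta$-line using Fact \ref{u49}), and that the additive discretization error $s_\theta M^{-(n-1)}$ — which becomes the constant $s_\theta$ after multiplying by $M^{n-1}$ — is genuinely negligible compared to $a_n$ when $a_n > b_n \sim n$. This is why the threshold $b_n$ is taken to grow linearly in $n$ rather than being constant: a constant threshold would not dominate $s_\theta$ for the slack in $\lambda$, and it is also exactly what makes the Borel–Cantelli series $\sum_n M^{4n}e^{-rb_n}$ converge. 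Everything else is a routine union bound plus Borel–Cantelli.
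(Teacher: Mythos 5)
Your first two paragraphs --- the application of Corollary \ref{u53} with $R_n=b_n$ and $u=1+\varepsilon/3$ to each line of the net, the union bound over the $M^{4n}$ lines of $\mathfrak{L}_n^\theta$, and Borel--Cantelli --- reproduce the paper's argument and are fine. The gap is concentrated in your final step. You take the maximizer $\ell^*\in\mathfrak{L}_{n+1}^\theta$ of $F_n$ and then apply ``the good event at level $n$ for the line $\ell^*$, which lies in $\mathfrak{L}_n^\theta\subset\mathfrak{L}_{n+1}^\theta$''. The inclusion runs the wrong way for this purpose: $\ell^*$ is only known to belong to the \emph{larger} family $\mathfrak{L}_{n+1}^\theta$, while the Borel--Cantelli conclusion at step $n$ was established only for the $M^{4n}$ lines of the coarser net $\mathfrak{L}_n^\theta$. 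So the bound $F_n(\ell^*)\le pM(1+\varepsilon/3)F_{n-1}(\ell^*)$ is simply not available for $\ell^*$. A second, independent problem: the claim that $a_n>b_n$ forces $F_{n-1}(\ell^*)>b_n$ is a non sequitur. The quantity $a_n$ is the maximum of $F_{n-1}$ over $\mathfrak{L}_n^\theta$, attained at some possibly unrelated line; the only relation you derived for $\ell^*$ is the \emph{upper} bound $F_{n-1}(\ell^*)\le a_n+s_\theta$. A line can maximize $F_n$ at step $n+1$ while having $F_{n-1}$ below the threshold $b_n$, and for such a line the large-deviation event says nothing.

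The repair is the one the paper uses: never apply the probabilistic estimate to $\ell^*$ itself. Instead, use Fact \ref{u49} at the level-$n$ scale to get $a_{n+1}=\max_{\mathfrak{L}_{n+1}^\theta}F_n\le\max_{\mathfrak{L}_n^\theta}F_n+s_\theta$, and then bound $F_n(\ell)$ for \emph{every} $\ell\in\mathfrak{L}_n^\theta$ by $pM(1+\varepsilon/3)F_{n-1}(\ell')$, where $\ell'$ is the line realizing $a_n$ (so that $F_{n-1}(\ell')=a_n>b_n$ by the standing assumption). This requires a case split that your write-up omits entirely: for $\ell$ with $F_{n-1}(\ell)>b_n$ one uses the good event together with $F_{n-1}(\ell)\le F_{n-1}(\ell')$; for $\ell$ with $F_{n-1}(\ell)\le b_n$ one needs a separate deterministic argument (the paper's \eqref{v1}, using $F_n(\ell)\le M F_{n-1}(\ell)$, $a_n>b_n$ and $p>M^{-2}$) to show such lines cannot dominate. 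Your closing remark --- that the additive discretization error $s_\theta$ is absorbed into the slack between $1+\varepsilon/3$ and $1+2\varepsilon/3$ once $b_n$ exceeds a constant multiple of $s_\theta/\varepsilon$, which is why $b_n$ must grow with $n$ --- is correct and matches the paper's choice of $N_0$; but as written the core deduction of \eqref{u58} does not go through.
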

 \begin{proof}
 We define the events
 $$
 \mathcal{A}_n(\ell):=\left\{
 F_n(\ell)>pMu\cdot F_{n-1}(\ell),\  F_{n-1}(\ell)>b_n
 \right\}
 $$
 and
 $$
 \mathcal{A}_n:=\bigcup _{\ell\in \mathfrak{L}_{n}^{\theta }} \mathcal{A}_n(\ell).
 $$
 Note that $\mathcal{A}_n(\ell)=\emptyset $ for those $\ell\in \mathfrak{L}_{n}^{\theta }$ satisfying $b_{n}>M^{n-1}|\ell|$. Otherwise, we can use
 Corollary \ref{u53} to obtain that
\begin{equation}\label{u98}
\mathbb{P}\left(
F_n(\ell)>pMu\cdot F_{n-1}(\ell),\ F_{n-1}(\ell)\geq b_n
\right)  \leq \exp\left(-rb_n\right).
\end{equation}
Using this and (\ref{u92})  we obtain that
\begin{equation}\label{u59}
\mathbb{P}\left(\mathcal{A}_n\right)\leq M^{4n}\e{-rb_n},
\end{equation}
which is summable by (\ref{u97}). The Borel-Cantelli Lemma yields that for almost all realizations there exists an $N'$ such that for all
$n\geq N'$ we have
\begin{equation}\label{v2}
\forall \ell\in \mathfrak{L}_{n}^{\theta }
\mbox{ either }
F_{n-1}(\ell)\leq b_n
\mbox{ or }
F_{n}(\ell) <pM\left(1+\frac{\varepsilon }{3}\right)F_{n-1}\left(\ell\right).
\end{equation}
Let $n>N'$ and assume that $a_n>b_n$. Choose $\ell'\in \mathfrak{L}_{n}^{\theta }$  such that
$F_{n-1}(\ell')\geq F_{n-1}(\ell)$ for all $\ell\in \mathfrak{L}_{n}^{\theta }$. Then
\begin{equation}\label{u99}
  F_{n-1}(\ell')>b_n.
\end{equation}
Fix an arbitrary $\ell\in \mathfrak{L}_{n}^{\theta }$. If $F_{n-1}(\ell)\leq b_n$ then
\begin{equation}\label{v1}
  F_n(\ell)\leq M^{-1}b_n<M^{-1}F_{n-1}(\ell')<pM\left(1+\frac{\varepsilon }{3}\right)\cdot F_{n-1}(\ell'),
\end{equation}
since $p>M^{-2}$. On the other hand, if $F_{n-1}(\ell)> b_n$ then by (\ref{v2}) and the definition of $\ell'$ we get
\begin{equation}\label{v3}
F_n(\ell)<pM\left(1+\frac{\varepsilon }{3}\right)\cdot F_{n-1}(\ell').
\end{equation}
We choose $N_0\geq N'$ such that for all $n\geq N_0$ we have $\frac{\varepsilon Mp }{3}b_n>s_\theta $. Then by
(\ref{v3}), (\ref{v1}) and (\ref{u99}) we obtain
\begin{eqnarray*}
 a_{n+1}  &=& \max_{\ell\in \mathfrak{L}_{n+1}^{\theta }}F_n(\ell) \\
   &\leq & \max_{\ell\in \mathfrak{L}_{n}^{\theta }}F_n(\ell)+s_\theta  \\
   &\leq &  pM\left(1+\frac{\varepsilon }{3}\right)\cdot F_{n-1}(\ell')+s_\theta \\
   &<& \underbrace{pM\left(1+\frac{2\varepsilon }{3}\right)}_\lambda \cdot a_n,
\end{eqnarray*}
since we assumed that $a_n>b_n$.
 \end{proof}

\begin{proof}[The proof of Theorem \ref{u36}]
Let $N_0$  and $\lambda $ be as in Lemma \ref{u57}.
First we show that there is an $N_1>N_0$ such that $a_{N_1}\leq b_{N_1}$.
Namely, if $a_{N_0+l}>b_{N_0+l}$ then by Lemma \ref{u57}, $a_{N_0+l+1}\leq \lambda a_{N_0+l}$.
Since $\lambda <1$ and $\left\{b_n\right\}$ is increasing we find a $N_1>N_0$ such that $a_{N_1}< b_{N_1}$. Then for all $k>N_1$ we have
\begin{equation}\label{u61}
  a_k\leq Mb_k.
\end{equation}
Namely, consider the ratio $r_k:=\frac{a_k}{b_{k}}$. If $r_k<1$ (as it happens for $k=N_1$) and $r_{k+1}>1$ then
$r_{k+1}<M$ since $a_{k+1}\leq Ma_k$ and $\left\{b_k\right\}$ is increasing. Then by Lemma \ref{u57} we have $r_{k+i+1}<\lambda r_{k+i}$ as long as $r_{k+i}>1$. Then the same cycle is repeated which completes the proof of (\ref{u61}). Using (\ref{u61}) we obtain that
 almost surely there is an $N_1$ such that for $n\geq N_1$
\begin{equation}\label{u62}
  F_{n}(\ell) \leq \frac{8M\log M}{r}\cdot (n+1), \mbox{ if }\ell\in \mathfrak{L}_{n+1}^{\theta  }.
\end{equation}
Then by  Fact \ref{u49}
\begin{equation}\label{u63}
  \forall  \ell'\in \mathfrak{L}^{\theta  },
\quad
F_{n}(\ell')<\frac{8M\log M}{r}\cdot n+
\left(\frac{8M\log M}{r}
+s_\theta\right)<C_2\cdot n ,
\end{equation}
for
\begin{equation}\label{v5}
  C_2:=\frac{8M\log M}{r}+1
\end{equation}
and $n$ big enough.
(We remind that $r$ was defined in (\ref{v4}).)
\end{proof}

\subsection{The proof of the main result of the Section} Now we prove
Theorem \ref{u1}.
Using the Mass distribution principle \cite{Falconer2003},
Theorem \ref{u1}  follows from the combination of Theorem \ref{u36} and Frostman's Lemma (\cite[Theorem 8.8]{Mattila1999}) with  (\ref{155}). For the convenience of the reader here we cite Frostman's Lemma from
\cite{Mattila1999}.
For a $d\geq 1$ and $B\subset \mathbb{R}^d$ let
$
\mathcal{M}(B)
$ be the set of Radon measures $\mu $ supported by $B$ with
$0<\mu (\mathbb{R}^d)<\infty $.
Then
\begin{lemma}[Frostman's Lemma]\label{u70}
Let $B\subset \mathbb{R}^d$ be a Borel set. Then $\mathcal{H}^r(B)>0$ if and only if
there exists a $\mu \in \mathcal{M}(B)$ such that
\begin{equation}\label{u73}
 \forall  x\in \mathbb{R}^d, \forall \rho >0,\quad \mu (B(x,\rho ))\leq \rho ^s.
\end{equation}
\end{lemma}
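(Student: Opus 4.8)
Only the forward implication requires work; the reverse is the mass distribution principle, which I would dispose of first. If $\mu\in\mathcal{M}(B)$ satisfies \eqref{u73}, then for any countable cover $B\subset\bigcup_i U_i$ with $\operatorname{diam}U_i<\delta$ one may discard the $U_i$ missing $B$ and pick $x_i\in U_i\cap B$, so that $U_i\subset\bar B(x_i,\operatorname{diam}U_i)$ and hence $\mu(U_i)\le(\operatorname{diam}U_i)^s$; summing gives $\sum_i(\operatorname{diam}U_i)^s\ge\mu(\mathbb{R}^d)$, and infimizing over covers and letting $\delta\to0$ yields $\mathcal{H}^s(B)\ge\mu(\mathbb{R}^d)>0$.

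For the converse the plan is to construct a Frostman measure by dyadic averaging; we may assume $0<s\le d$, as otherwise the statement is trivial. First I would reduce to a compact set: $\mathcal{H}^s(B)>0$ is equivalent to positivity of the Hausdorff content $\mathcal{H}^s_\infty(B)$, which is a Choquet capacity, so by capacitability the Borel set $B$ contains a compact $K$ with $c:=\mathcal{H}^s_\infty(K)>0$; after rescaling assume $K\subset[0,1)^d$. For each $n$ I would then set $\mu_n^{(n)}:=\sum_Q 2^{n(d-s)}\mathcal{L}^d|_Q$, the sum over the level-$n$ dyadic cubes $Q$ meeting $K$ (so $\mu_n^{(n)}(Q)=2^{-ns}$ for each such $Q$), and recursively, for $k=n-1,\dots,0$, put $\mu_n^{(k)}:=\sum_Q\min\!\bigl(1,\,2^{-ks}/\mu_n^{(k+1)}(Q)\bigr)\mu_n^{(k+1)}|_Q$ over the level-$k$ dyadic cubes, finally writing $\mu_n:=\mu_n^{(0)}$.

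The heart of the argument is to verify two properties of $\mu_n$. (i) \emph{Growth}: $\mu_n(Q)\le(\operatorname{side}Q)^s$ for \emph{every} dyadic cube $Q$; for levels $k\le n$ this holds because the averaging step at level $k$ forces the bound on level-$k$ cubes while later steps only multiply by factors $\le1$, and for levels $>n$ it follows from the Lebesgue spreading together with $s\le d$, since a level-$(n+j)$ subcube of a level-$n$ cube receives at most the fraction $2^{-jd}\le 2^{-js}$ of that cube's mass. (ii) \emph{Mass}: $\mu_n(\mathbb{R}^d)\ge d^{-s/2}c$. Here I would take $\mathcal{D}=\{Q_i\}$ to be the maximal dyadic cubes that become saturated during the construction (a level-$k$ cube being saturated if its mass reaches $2^{-ks}$, which holds in particular for every level-$n$ cube meeting $K$); these are pairwise disjoint, they cover $K$, and by maximality no ancestor of a $Q_i$ is ever capped, so $\mu_n$ assigns $Q_i$ exactly $2^{-k_is}=(\operatorname{side}Q_i)^s$. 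Since $\operatorname{supp}\mu_n\subset\bigcup_iQ_i$, the definition of content then gives $c\le\sum_i(\operatorname{diam}Q_i)^s=d^{s/2}\sum_i\mu_n(Q_i)\le d^{s/2}\mu_n(\mathbb{R}^d)$. I expect this step — pinning down the notion of ``saturated'', checking that the maximal saturated cubes cover $K$ and retain their full mass — to be the main obstacle, since it is sensitive to the order in which the averaging steps are performed.

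To conclude, the total masses $\mu_n(\mathbb{R}^d)$ lie in $[\,d^{-s/2}c,\,1\,]$ (the cap at level $0$ bounds them above), so a subsequence converges weak-$*$ to a Radon measure $\mu$; because $K$ is closed, $\operatorname{supp}\mu\subset K\subset B$ and $\mu(\mathbb{R}^d)\ge d^{-s/2}c>0$, so $\mu\in\mathcal{M}(B)$. Finally, a ball $B(x,\rho)$ meets at most $3^d$ closed dyadic cubes of side $\le\rho$, and weak-$*$ convergence gives $\mu(\bar Q)\le\liminf_n\mu_n(\bar Q)\le\rho^s$ for each of them, whence $\mu(B(x,\rho))\le 3^d\rho^s$; replacing $\mu$ by $\mu/3^d$ produces the constant-$1$ bound in \eqref{u73}. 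Since the paper only uses the statement, the quickest route in practice is of course simply to quote \cite[Theorem 8.8]{Mattila1999}.
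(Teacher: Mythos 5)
The paper does not actually prove this lemma; it simply quotes it from \cite[Theorem 8.8]{Mattila1999}, and your argument is essentially the proof given there: reduce to a compact subset of positive Hausdorff content via capacitability, run the top-down dyadic capping construction, extract the disjoint family of maximal saturated cubes to bound the total mass from below by the content, and pass to a weak-$*$ limit. The mass-distribution direction and properties (i)--(ii) of $\mu_n$ are argued correctly; in particular your bookkeeping for the saturated cubes --- maximality forces every strict ancestor's capping factor to equal $1$, so $\mu_n(Q_i)=2^{-k_is}$ survives to the final measure --- is exactly what is needed. The one genuine (but easily repaired) slip is at the very end: for a \emph{closed} cube the portmanteau inequality goes the other way, $\mu(\bar Q)\ge\limsup_n\mu_n(\bar Q)$ rather than $\mu(\bar Q)\le\liminf_n\mu_n(\bar Q)$ (mass from neighbouring cubes can concentrate on a shared face in the limit), so you cannot transfer the bound $\mu_n(\bar Q)\le\rho^s$ to $\mu$ cube by cube. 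Instead apply the open-set inequality to the ball itself: $\mu(B(x,\rho))\le\liminf_n\mu_n(B(x,\rho))\le\liminf_n\sum_Q\mu_n(\bar Q)\le C_d\,\rho^s$, the sum running over the boundedly many dyadic cubes of side comparable to $\rho$ that meet the ball, and then normalise as you do. With that correction the proof is complete (note also that the exponent $r$ in the paper's statement should read $s$).
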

The other ingredient of the proof is the following very well known lemma \cite{Falconer2003}
\begin{lemma}[Mass distribution principle]
Let $B\subset \mathbb{R}^d$. Assume that there exists a measure $\mu \in \mathcal{M}(B)$
and $\delta >0$ such that $\mu (A)<\mathrm{const}\cdot |A|^s$. Then $\dim_{\rm H}(A)\geq s$.
\end{lemma}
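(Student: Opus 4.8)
The plan is to bound the $s$-dimensional Hausdorff measure of the carrier set from below directly out of the mass estimate, and then invoke the elementary fact that positive $\mathcal{H}^s$-measure forces $\dim_{\rm H}\geq s$. (I read the hypothesis in its standard form: the bound $\mu(A)\leq \mathrm{const}\cdot|A|^s$ is assumed for all sets $A$ with $|A|\leq\delta$, and the conclusion is about the dimension of the set $B$ carrying $\mu$, with $A$ in the displayed conclusion being a typo for $B$.) First I would fix an arbitrary cover $\{U_i\}_i$ of $B$ by sets of diameter $|U_i|\leq\delta$. Since $\mu\in\mathcal{M}(B)$ is supported on $B$ we have $\mu(\mathbb{R}^d)=\mu(B)>0$, and because the $U_i$ cover $B$, countable subadditivity of $\mu$ gives
\begin{equation*}
0<\mu(B)\leq \sum_i \mu(U_i).
\end{equation*}

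Next I would apply the hypothesis $\mu(U_i)\leq \mathrm{const}\cdot|U_i|^s$ term by term; this is legitimate precisely because every set in an admissible $\delta$-cover has diameter at most $\delta$. This yields
\begin{equation*}
\mu(B)\leq \mathrm{const}\cdot \sum_i |U_i|^s.
\end{equation*}
As this holds for \emph{every} $\delta$-cover of $B$, taking the infimum over all such covers produces a lower bound on the size-$\delta$ approximation of Hausdorff measure, $\mathcal{H}^s_\delta(B)\geq \mu(B)/\mathrm{const}$, which is crucially independent of $\delta$. Letting $\delta\to 0$ therefore survives the limit and gives $\mathcal{H}^s(B)\geq \mu(B)/\mathrm{const}>0$. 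Finally, $\mathcal{H}^s(B)>0$ implies $\dim_{\rm H}(B)\geq s$ by the very definition of Hausdorff dimension (for $t>s$ one has $\mathcal{H}^t(B)=0$, so the dimension cannot drop below $s$), completing the argument.

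There is no genuinely hard step here; this is a short and classical argument. The only two points that need care are that the mass bound must be invoked solely on sets of diameter at most $\delta$, so that each member of a competing $\delta$-cover qualifies, and that the resulting lower bound $\mu(B)/\mathrm{const}$ on $\mathcal{H}^s_\delta(B)$ is uniform in $\delta$, which is exactly what lets it persist as $\delta\to 0$ to yield a positive bound on the actual Hausdorff measure $\mathcal{H}^s(B)$ rather than merely on its approximants.
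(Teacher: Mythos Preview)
Your proof is correct and is the standard textbook argument; the paper itself does not supply a proof of this lemma but simply cites it as a very well known result from \cite{Falconer2003}. You also correctly identified the typo in the statement (the conclusion should read $\dim_{\rm H}(B)\geq s$).
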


\begin{proof}[Proof of Theorem \ref{u1}]
In what follows we always condition on $E\ne\emptyset $. Then by (\ref{155})
\begin{equation}\label{u72}
  \dim_{\rm H}(E)=\frac{\log (p\cdot M^2)}{\log M}=:s, \mbox{ almost surely. }
\end{equation}
It is enough to verify  that
\begin{equation}\label{u71}
\forall q<s,\forall \theta  \in \left(0,\frac{\pi }{4}\right),
\forall \alpha\in \left(\theta  ,\pi/2 -\theta  \right),\quad \dim_{\rm H}(\mathrm{proj}^\alpha (E))>q.
\end{equation}
To see this, we fix an $r$ with $q<r<s$. Then  $\mathcal{H}^r(E)=\infty $.
So, by  Frostman's Lemma there exists a random  measure $\mu \in \mathcal{M}(E)$ such that (\ref{u73})
holds. In particular
\begin{equation}\label{u74}
\forall \mathbf{x}\in \mathcal{E}_n, \quad\mu \left(K_n(\mathbf{x})\right)\leq M^{-nr}.
\end{equation}
Put $\nu _\alpha :=\mathrm{proj}^\alpha _*\mu $.
 Fix an arbitrary  $0<\varepsilon $ and fix an arbitrary
 $0<\rho  $ which is so small that
 \begin{itemize}
   \item for $M^{-(n+1)}<\rho \leq M^{-n}$ we have $n\geq N$,
 for the $N$ defined in (\ref{v6}) and
   \item $nM ^{-nr}<M^{-nq}$.
 \end{itemize}
 Then using these two properties and (\ref{u74}) and \ref{v6} implies that
 $$
 \nu _\alpha (x-\rho, x+\rho )\leq
 10C_2 nM^{-nr}\leq 10C_2 M^{-nq}\leq 10C_2M^q \rho ^{-q}.
 $$
 This completes the proof of (\ref{u71}) by the Mass distribution principle.

\end{proof}


In the rest of the section we prove
that we can  find slices of angle $\frac{\pi }{4}$ which intersect constant times $n$ level $n$
squares almost surely conditioned on $E\ne\emptyset $.

\begin{proposition}\label{u76}
There exists a constant $0<\lambda <1$ such that
for almost all realizations, conditioned on $E\ne\emptyset $, there exists an $N_6$ such that for all
$n>N_6$ there exists an $\ell\in \mathcal{L}$ with
\begin{equation}\label{u77}
  \#\mathcal{E}_n\left(\ell\right)>\lambda n.
\end{equation}

\end{proposition}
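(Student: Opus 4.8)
The plan is to prove the lower bound by exhibiting, along some particular diagonal direction $\alpha = \pi/4$, a slice that meets many retained level-$n$ squares, and to do this by a second-moment / survival-of-a-subprocess argument that runs in the opposite direction to the upper bound of Theorem~\ref{u36}. Concretely, fix the diagonal direction and consider the one-parameter family of slices $\ell^{\pi/4}(z)$, $z\in\Delta$. For a fixed $z$ whose $M$-adic expansion is generic, the sequence of level-$n$ squares of the \emph{ambient} construction that $\ell^{\pi/4}(z)$ passes through forms a nested ``tube''; restricting the fractal percolation to this tube produces, at each level, a bounded number of squares (at most $2M^{\,?}$ per level, uniformly), each retained independently with probability $p$. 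Thus $\#\mathcal{E}_n(\ell^{\pi/4}(z))$ dominates a branching-type process, and the point is that with $p>M^{-2}$ this restricted process has a positive probability of growing at least linearly in $n$ (in fact one only needs: positive probability that it never dies out and stays of size $\ge 1$ for all $n$, then one upgrades to linear growth).

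First I would set up the correct combinatorial model: along a $\pi/4$ line the squares it crosses at successive levels satisfy that each level-$(n-1)$ crossed square contains between, say, $M$ and $2M$ level-$n$ crossed squares (this is the elementary geometry of a $45^\circ$ line through an $M$-adic grid). So conditionally on which squares of level $n-1$ along $\ell$ are retained, the number retained at level $n$ is a sum of independent $\mathrm{Binomial}$ contributions with total mean $\ge Mp \cdot \#\mathcal{E}_{n-1}(\ell) > M^{-1}\#\mathcal{E}_{n-1}(\ell)$. This last inequality is too weak on its own ($Mp$ can be $<1$ under the Principal Assumption), so the real mechanism must be different: one should \emph{not} fix $\ell$ in advance but allow $\ell$ to be chosen depending on the realization. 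That is, at each level we follow a greedy/optimal path down the tree of crossed squares, always descending into a retained child; this is exactly a first-passage percolation on the $M$-ary tree of $\pi/4$-slices, and survival of such a path is what we need. Since each crossed level-$(n-1)$ square has $\ge M \ge 2$ retained-or-not children each independently retained with probability $p>M^{-2}$, the branching number of the surviving subtree of retained crossed squares is positive; more to the point, one can run a standard argument showing that with positive probability there is an infinite ray, and along portions of that ray the slice meets $\ge \lambda n$ squares for a suitable $\lambda$.

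The key steps, in order, would be: (1) Fix $\alpha=\pi/4$ and describe the (deterministic) tree $\mathcal{T}$ whose level-$n$ vertices are the level-$n$ $M$-adic squares crossed by \emph{some} $\pi/4$ line, with the parent relation; note each vertex has $\ge 2$ children and the whole structure is essentially a union over $z\in\Delta$ of the tubes. (2) Observe that retaining squares in the percolation induces an independent bond/site percolation on $\mathcal{T}$ with parameter $p>M^{-2}\ge$ (reciprocal of the number of children), so that the retained subtree $\mathcal{T}_\omega$ is supercritical and survives with positive probability $q_0>0$. (3) Conditioned on survival of $\mathcal{T}_\omega$ from a given level-$1$ square, pick an infinite ray; the sequence of squares along this ray determines (in the limit) a point $z^\ast\in\Delta$ and hence a line $\ell=\ell^{\pi/4}(z^\ast)\in\mathfrak{L}$, and by construction $\#\mathcal{E}_n(\ell)\ge n$ eventually (one crossed, retained square per level), or with a more careful ``thick ray'' one gets $\ge\lambda n$. (4) Remove the conditioning: by statistical self-similarity and independence of the cylinders (Remark~\ref{u84}), if $E\ne\emptyset$ then infinitely many level-$k$ squares are retained and each independently gives the tube-survival event probability $\ge q_0$; hence a.s. conditioned on $E\ne\emptyset$ some level-$k$ cube contains such a surviving ray, which (rescaled) yields the desired slice for all $n>N_6$, absorbing the finite initial segment into the constant.

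The main obstacle is step (2)–(3): getting genuinely \emph{linear} growth $\#\mathcal{E}_n(\ell)\ge\lambda n$ for a single deterministic $\ell$, rather than just ``nonempty infinitely often'', and making sure the ray one selects corresponds to an honest line segment in $\mathfrak{L}$ (the tube along a $\pi/4$ line is not a single column of squares — it can branch, and one must check the selected nested sequence of crossed squares indeed shrinks to a line, not just a point with ambiguous direction). I expect one handles linearity by a renewal argument: the surviving tree, conditioned on survival, has an a.s. positive lower growth rate, so along the chosen ray the count of retained crossed squares per level is bounded below in Cesàro average by a positive constant, giving $\ge\lambda n$ for $n$ large. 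The direction/line issue is handled by the $M^{-2n}$-density constructions already introduced (Definition~\ref{u91}) to snap the limiting ray to an actual slice.
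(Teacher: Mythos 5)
There is a genuine gap, and it is located exactly where you flagged your ``main obstacle'': your construction never produces $\lambda n$ retained squares \emph{at a single level $n$ on a single line}, which is what $\#\mathcal{E}_n(\ell)>\lambda n$ requires. An infinite ray of retained nested squares $K_1\supset K_2\supset\cdots$ shrinking to a point $z^\ast$ only guarantees that the $\pi/4$ line through $z^\ast$ meets \emph{one} retained square at each level, i.e.\ $\#\mathcal{E}_n(\ell)\geq 1$; your jump to ``$\geq n$ (one crossed, retained square per level)'' conflates the count at level $n$ with a sum over levels, and the proposed Ces\`aro/renewal repair suffers from the same confusion. Worse, the underlying percolation-on-the-tube is not supercritical: a fixed $45^{\circ}$ line crosses about $2M^n$ level-$n$ squares, each retained with probability $p^n$, so the expected count is about $2(Mp)^n$, which under the Principal Assumption $p\leq M^{-1}$ does not grow (and tends to $0$ for $p<M^{-1}$); your inequality ``$p>M^{-2}\geq$ reciprocal of the number of children'' is false for $M\geq 3$, since the tube has at most $2M$ children per vertex and $2Mp$ can be well below $1$. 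Choosing the line greedily does not rescue this, because collinearity at a fixed angle is a single global constraint on the intercept: you cannot independently descend into a retained child in each crossed square and remain on one line.

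The paper's mechanism is different and is the missing idea. It takes the $\approx(pM^2)^{\gamma n}$ retained level-$n$ squares (guaranteed by the dimension formula \eqref{155}) and, for each, considers the \emph{independent} event that the entire diagonal of its $M^k$ level-$(n+k)$ subsquares is retained, with $M^k\sim\tau n$. Each such event has probability at least $p^{2M^k}\geq p^{\tau n}$ --- exponentially small, but beaten by the exponentially many independent trials once $\tau$ is tuned so that $p^{\gamma+\tau}M^{2\gamma}>1$; Borel--Cantelli then yields, for all large $n$, at least one retained square whose full subdiagonal survives, and the diagonal line of that square automatically passes through all $M^k\geq\tau n/(2M)$ of these collinear retained level-$m$ squares, $m=n+k<2n$. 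Note that here the line meeting $\lambda m$ retained squares changes with $m$; no single line is claimed to work for all levels, which is precisely the flexibility your ``infinite ray'' approach gives up.
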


For the proof we need  some new notation and  an easy Fact.

 \medskip
 Let  $D_k$ be the event that all the $M^k$ level-$k$ squares of the diagonal of $[0,1]^2$ is retained. That is
 $$D_k:=
 \left\{\forall \underline{\ell}_k=(\ell_1,\dots ,\ell_k)\in \left\{1,\dots ,M\right\}^k,
 (\underline{\ell}_k,\underline{\ell}_k)\in \mathcal{E}_k
 \right\}.
 $$
 We get
 the definition of the event $D_k^{\underline{i}_n,\underline{j}_n}$ if we substitute the diagonal of $[0,1]^2$ above with the diagonal of $K_{\underline{i}_n,\underline{j}_n}$.
 $$
 D_k^{\underline{i}_n,\underline{j}_n}:=
 \left\{
 \forall \underline{\ell}_k=(\ell_1,\dots ,\ell_k)\in \left\{1,\dots ,M\right\}^k,
( \underline{i}_n\underline{\ell}_k,\underline{j}_n\underline{\ell}_k)\in \mathcal{E}_{n+k}
 \right\}
 $$
A simple argument shows that
\begin{equation}\label{u78}
 p^{2M^k}< \mathbb{P}\left(D_k\right)<p^{M^k}.
\end{equation}
 Let $\Omega '\subset \Omega $ be the set of realizations for which (\ref{155}) holds and let $\mathbb{P}'(\cdot ):=\mathbb{P}\left(\cdot |\Omega '\right)$.

\begin{proof}[Proof of Proposition \ref{u76}]
 Since $pM^2>1$ we can find a
 $\tau $ satisfying $0<\tau <\frac{\log M^2p}{\log 1/p}$. Then
$
  p^{1+\tau }M^2>1
$.
 Therefore we can choose a $0<\gamma <1$ such that
\begin{equation}\label{u75}
p^{\gamma +\tau }M^{2\gamma }>1.
\end{equation}
By (\ref{155})
for all $\omega \in \Omega '$ realization we can find an $N_7=N_7(\omega )$ such that
\begin{equation}\label{u79}
\forall n\geq N_7,\quad  \#\mathcal{E}_n>\left(pM^2\right)^{n\gamma }.
\end{equation}
For every $k$ the events $\left\{D_{k}^{\underline{i}_n,\underline{j}_n}\right\}_{(\underline{i}_n,\underline{j}_n)\in \mathcal{E}_n}$ are independent and each has probability greater than $p^{2M^k}$.
Let $A_n$ be the event that at least one of the events $\left\{D_{k}^{\underline{i}_n,\underline{j}_n}\right\}_{(\underline{i}_n,\underline{j}_n)\in \mathcal{E}_n}$ holds and $A_{n}^{c}$ that non-of them holds. Then
$$
\forall n\geq N_7, \quad
\mathbb{P}'\left(A_{n}^{c}\right)\leq
\left(1-p^{2M^k}\right)^{(pM^2)^{n\gamma }}.
$$
We choose $k=k(n)$ such that
\begin{equation}\label{u80}
  2M^k\leq \tau n<2M^{k+1}.
\end{equation}
Let $a_n:=\left(1-p^{2M^k}\right)^{(pM^2)^{n\gamma }}$
Then $\log a_n<-\left(p^{\gamma +\tau }M^{2\gamma }\right)^n$ which tends to $-\infty $
exponentially fast by (\ref{u75}). Hence
 the series $\sum\limits_{n}\mathbb{P}'(A_{n}^{c})$ is summable. So, Borel Cantelli
 Lemma yields that there exists an $N_6>N_7$ such that for all $n>N_6$ the event
 $A_n$ holds.

 This shows that for $\mathbb{P}'$ almost all realizations $\omega $ for all $m=n+k$ big enough there is an $\ell\in \mathcal{L} $ such that
 $\#\mathcal{E}_m\left(\ell\right)\geq \frac{\tau n}{2M}$. Since $m<2n$ this completes the proof of the proposition.
\end{proof}

\section{Sums of random Cantor sets}

\subsection{Product of percolations} \label{sec:prod}

In this section we consider $d$ independent fractal percolations $\overline{E}^{(1)},\dots ,\overline{E}^{(d)}$ on the line
 with possibly different probabilities $p_1,\ldots,p_d$ but with the same scale $M$.
 The object of this Section is as follows:
 We fix an $\mathbf{a}:=(a_1,\dots ,a_d)\in \mathbb{R}^d$ with $a_i\ne 0$ for all $i=1,\dots ,d$ and consider
  the algebraic sum of coefficients $a_1,\dots ,a_d$:
  $$
 \pmb{ \widetilde{E}_{\mathbf{a}}}:=a_1\cdot \overline{E}^{(1)}+\cdots +a_d\cdot \overline{E}^{(d)}=
 \left\{\sum\limits_{k=1}^{d}
 a_i\cdot e^{(i)}:e^{(i)}\in \overline{E}^{(i)}
 \right\}
 .
  $$
and ask whether such sum contains an interval. It is a generalization of a question solved (in higher generality) by Dekking and Simon in \cite{Dekking2008} for sums of two independent percolations.

Without loss of generality in the rest of the paper we may assume that
\begin{equation}\label{v8}
  p_i> M^{-1},\ \forall i=1,\dots ,d.
\end{equation}
(otherwise the corresponding $\overline{E}^{(i)}$ would be almost surely empty).



The main result of this Section is as follows:

\begin{theorem} \label{thm:prod}
Assume that
\begin{equation} \label{cond1}
\prod_{i=1}^{d} p_i > M^{-d+1}.
\end{equation}
Then for every $\mathbf{a}=(a_1,\dots ,a_d)\in \mathbb{R}^d$, $a_i\ne 0$ for all $i=1,\dots ,d$
 the sum $\widetilde{E}_\mathbf{a}=\sum\limits_{i=1}^{d} a_i \overline{E}^{(i)}$ contains an interval almost surely, conditioned on all $\overline{E}^{(i)}$ being nonempty.
\end{theorem}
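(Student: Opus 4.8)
The plan is to mimic the strategy that worked for the planar projection result, transferring the key estimate (Theorem \ref{u36}) to the product setting. Observe first that $\widetilde{E}_{\mathbf{a}}$ is, up to an affine change of coordinates in $\mathbb{R}^d$, the orthogonal projection of the product set $\overline{E}^{(1)}\times\cdots\times\overline{E}^{(d)}\subset[0,1]^d$ onto a line whose direction is determined by $\mathbf{a}$; the hypothesis $a_i\ne 0$ guarantees this direction is generic (transversal to all coordinate hyperplanes). Because the $\overline{E}^{(i)}$ are built with the same scale $M$, the set $F:=\overline{E}^{(1)}\times\cdots\times\overline{E}^{(d)}$ is itself a random $M$-adic construction in $[0,1]^d$: a level-$n$ cube $K_n(\mathbf{x})$ with $\mathbf{x}$ having $M$-adic coordinates $(\underline{i}^{(1)}_n,\dots,\underline{i}^{(d)}_n)$ is retained iff each $\underline{i}^{(k)}_n$ is retained in $\overline{E}^{(k)}$. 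This is \emph{not} a fractal percolation in the sense of Section \ref{154} — the retention events of two level-$n$ cubes in $F$ are independent only when they differ in \emph{every} coordinate block, which is exactly the source of the ``much more dependence'' mentioned in the introduction.

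The heart of the argument is to prove the analogue of Theorem \ref{u36}: for almost every realization there is a constant $C$ and a threshold $N$ so that every hyperplane $H$ of direction $\mathbf{a}^{\perp}$ (i.e. $\{ \sum a_k x_k = t\}$), with $t$ bounded away from the degenerate values, satisfies $L_n(H)\le C n M^{-n(d-1)}$ for $n\ge N$, where $L_n(H)$ is the $(d-1)$-dimensional volume defined in \eqref{u85} — equivalently $\#\mathcal{E}_n(H)\le C'n$. Combined with condition \eqref{cond1}, which by \eqref{155} says $\dim_{\rm H}F=\sum_i \frac{\log(p_iM)}{\log M} = \frac{\log\prod p_i M^d}{\log M}>d-1$, this would give $\#\mathcal{E}_n(F) \cdot (\text{typical slice size}) \gg 1$, and then a second-moment / Frostman argument as in the proof of Theorem \ref{u1} produces a measure on the projection with bounded density, hence an interval. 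Concretely: push forward down to the line the natural mass distribution on $F$ (Frostman measure at exponent $r$ with $d-1<r<\dim_{\rm H}F$); by the slice bound each interval of length $\rho\approx M^{-n}$ on the line receives mass $\lesssim n M^{-nr}\lesssim \rho^{d-1}$, but since $r>d-1$ this is $\lesssim \rho^{1}$ — wait, one must be careful: the slice count controls \emph{how many level-$n$ cubes} project into the interval, and each carries Frostman mass $\le M^{-nr}$, giving pushforward mass $\lesssim n M^{-nr}$, which for the interval of length $M^{-n}$ is $o(M^{-n})$ once $r>d-1$ is not enough — one in fact needs the measure to have an $L^2$ density, so the cleaner route is to estimate $\int (\text{density})^2$ via $\mathbb{E}[L_n(H)^2]$ and conclude absolute continuity with square-integrable density, whence the support contains an interval.

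To get the uniform slice bound one adapts Section \ref{u45}. Fix a $\theta$-separated family of hyperplane directions and an $M^{-n}$-net of translates, as in Definition \ref{u91}; the counting bound $\#\mathfrak{L}_n^\theta = M^{O(n)}$ survives. For a single hyperplane $H$, condition on the level-$(n-1)$ structure and write $L_n(H)$ as a sum of contributions $X_{\mathbf{x}}$ over retained level-$(n-1)$ cubes $\mathbf{x}$ meeting $H$. The obstacle: these $X_{\mathbf{x}}$ are \emph{not independent}, because cubes sharing a coordinate block refine through the same sub-percolation. The remedy is to partition $\mathcal{E}_{n-1}(H)$ into finitely many (at most $d$, or a number depending only on $M$ and $d$) subfamilies within each of which the cubes pairwise differ in all $d$ coordinate blocks — a colouring argument on the ``conflict graph'' whose chromatic number is bounded because along a hyperplane of generic direction at most boundedly many level-$(n-1)$ cubes can share a given coordinate block — and apply Azuma–Hoeffding (Theorem \ref{u40}) separately on each subfamily, exactly as in Lemma \ref{u35}, then union-bound. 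This yields, for each fixed $H$, $\mathbb{P}(L_n(H)>pML_{n-1}(H)u \mid L_{n-1}(H)\ge R)<\exp(-c R)$ for $R=R_n$ slowly growing, and the Borel–Cantelli / iteration scheme of Lemma \ref{u57} and the proof of Theorem \ref{u36} transfers verbatim with $M^{-n}$ replaced by $M^{-n(d-1)}$ in the normalisation $F_n=L_n M^{n(d-1)}$. I expect this decomposition-into-independent-subfamilies step to be the main obstacle: one must verify rigorously that the chromatic number of the conflict graph is bounded uniformly in $n$ and in the hyperplane (this is where genericity of $\mathbf{a}$ and the $\theta$-separation enter), and that the conditional expectation identity \eqref{u90} still holds block-wise so that Azuma applies with the right centering. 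Everything downstream — the Frostman/second-moment passage to an interval, the reduction \eqref{v8}, and the conditioning on nonemptiness — is then routine, paralleling the proofs of Theorem \ref{u1} and Proposition \ref{u76}.
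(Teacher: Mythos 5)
Your plan founders at the step you yourself flag as the main obstacle: the claim that the conflict graph (two retained level-$n$ cubes adjacent when they share a coordinate block) has chromatic number bounded uniformly in $n$, ``because along a hyperplane of generic direction at most boundedly many level-$n$ cubes can share a given coordinate block.'' That is true only for $d=2$, where fixing one coordinate reduces the hyperplane (a line) to a single point. For $d\ge 3$ the set $H_t\cap\{x_i=c\}$ is a codimension-two affine subspace, which deterministically meets on the order of $M^{n(d-2)}$ level-$n$ cubes; the degree of the conflict graph is therefore unbounded and no coloring with $O_d(1)$ classes exists. The paper's resolution is an induction on the dimension (Propositions \ref{prop:forgot} and \ref{prop:prod}): restricted to the \emph{random} product set, the codimension-two slice meets only $O(n^{(1+\varepsilon)(d-2)})$ retained cubes, so Lemma \ref{graph} yields $O(n^{(1+\varepsilon)(d-2)})$ independent subfamilies --- a number that grows with $n$ but is still manageable because the ``small'' subfamilies contribute a controlled additive error in the recursion \eqref{ind}. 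Making that induction hypothesis applicable forces one first to \emph{lower} the probabilities so that $\prod_{i\neq j}p_i<M^{-d+2}$ for every $j$ (Lemma \ref{lem:pi}); this reduction is entirely absent from your plan, and without it the induction has nothing to stand on.

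The endgame is also not sufficient to produce an interval. Even granting a uniform upper bound on all slices, your Frostman/$L^2$ route shows at most that the projected measure is absolutely continuous with square-integrable density, i.e.\ that $\widetilde{E}_{\mathbf{a}}$ has positive Lebesgue measure; the support of an $L^2$ density can be a fat Cantor set with empty interior, so ``whence the support contains an interval'' is a non sequitur (your own aside --- ``wait, one must be careful'' --- is pointing at exactly this). The paper instead runs a \emph{lower-bound} argument: it finds a time $N_j$ and an interval $I_j$ of $t$-values on which the normalized slice volume $g_{N_j}(t)$ is exponentially large, and then propagates $g_n(t)>e^{\varepsilon n}$ for all $n\ge N_j$ and all $t\in I_j$ by applying Azuma--Hoeffding from below on each independent subfamily, with the upper-bound Proposition \ref{prop:prod} used only to certify that the number of subfamilies is $O(n^{(1+\varepsilon)(d-3)})$ so their total contribution is negligible. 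Since every $H_t$ with $t\in I_j$ then meets every level of the construction, compactness gives $I_j\subset\widetilde{E}_{\mathbf{a}}$. This uniform-in-$t$ lower bound on the slices is the core of the proof of Theorem \ref{thm:prod} and is missing from your proposal.
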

Fix an arbitrary $\mathbf{a}=(a_1,\dots ,a_d)\in \mathbb{R}^d$, $a_i\ne 0$ for all $i=1,\dots ,d$. Without loss of generality we may assume that
\begin{equation}\label{v9}
  \|\mathbf{a}\|=1\mbox{ and } a_i> 0\mbox{ for all } i=1,\dots ,d.
  \end{equation}
Clearly, $\widetilde{E}_\mathbf{a}$ is the orthogonal projection of the random set
$$
\pmb{\widetilde{E}}:=\overline{E}^{(1)}\times \cdots \times \overline{E}^{(d)}\subset [0,1]^d
$$
to the line $\left\{t\cdot \mathbf{a}|t\in \mathbb{R}\right\}$. Hence it follows from  (\ref{u72})
that whenever condition (\ref{cond1}) does not hold then we have
$$
\dim_{\rm H}\widetilde{E}_\mathbf{a}\leq
\dim_{\rm H}\widetilde{E}=\sum\limits_{i=1}^{d}\dim_{\rm H}E^{(i)}
= \sum\limits_{i=1}^{d}\frac{\log Mp_i}{\log M}\leq 1.
$$
\begin{remark}
It is well known that for almost every realization $E$ of a fractal percolation in $\mathbb{R}^d$  $\dim_{\rm H}{E}=1$ implies that $\mathcal{H}^1({E})=0$, see \cite{Mauldin1986}. The same proof goes through for cartesian products of fractal percolations: for typical $\widetilde{E}$  if $\dim_{\rm H}\widetilde{E}=1$ then $\mathcal{H}^1(\widetilde{E})=0$. Hence,  Theorem
\ref{thm:prod} is sharp.

\end{remark}

\subsubsection{Connection between $\widetilde{E}$ and the $d$-dimensional fractal percolation}
Set $
p=\prod_{i=1}^{d} p_i
$  and we write $E$ for the $d$-dimensional fractal percolation with parameters $M,p$. Let $\overline{\mathcal{N}}_n$, $\overline{K}_n(\mathbf{x})$, $\overline{\mathcal{E}}_{n}^{(i)}$ and $\overline{E}^{(i)}_n$ be the one dimensional analogues
of $\mathcal{N}_n, K_n(\mathbf{x})$, $\mathcal{E}_n$ and $E_n$ respectively. That is
$$
\pmb{\overline{\mathcal{N}}_n}:=
\left\{x:x=\left(k+\frac{1}{2}\right)M^{-n},\ k\in \left\{0,1,\dots ,M^n-1\right\}\right\}.
$$
We denote
the level-$n$ interval with center $x\in \overline{\mathcal{N}}_n$ by $\overline{K}_n(x)$.
$$
\pmb{\overline{K}_n(\mathbf{x})}=
x+\left[-\frac{1}{2M^n},\frac{1}{2M^n}\right].
$$
Set
$$
\pmb{\overline{\mathcal{E}}_{n}^{(i)}}:=\left\{x\in \overline{\mathcal{N}}_n:
\overline{K}_n(\mathbf{x}) \mbox{ is retained in the construction of  }E_{n}^{(i)}
\right\}
$$
Finally, the $n$-th approximation of $E^{(i)}$ is denoted by $\overline{E}^{(i)}_n$.
$$
\pmb{\overline{E}^{(i)}_n}:=\bigcup _{x\in \overline{\mathcal{E}}_{n}^{(i)}}
\overline{K}_n(x) \mbox{ and }
\pmb{\widetilde{E}_n}:=\overline{E}^{(1)}_n\times \cdots \times \overline{E}^{(d)}_n.
$$
Then
\begin{equation}\label{v10}
  \widetilde{E}=\bigcap _{n=1}^{\infty }\widetilde{E}_n.
\end{equation}
For an $\mathbf{x}=(x_1,\dots x_d)\in \mathcal{N}_n$ we have
\begin{equation}\label{v12}
 K_n(\mathbf{x})\subset \widetilde{E}_n \Longleftrightarrow
\overline{K}_n(x_i)\subset \overline{E}_{n}^{(i)},\  \forall i\in \left\{1,\dots ,d\right\}.
\end{equation}
Hence for every $\mathbf{x}\in \mathcal{N}_n$ the events that
 $K_n(\mathbf{x})\subset \widetilde{E}_n$ and
$K_n(\mathbf{x})\subset E_n$ share the same probability of $p^n$.
Furthermore, when $\mathbf{x}\in \mathcal{N}_n$ and $\mathbf{y}\in \mathcal{N}_{n-1}$ such that $K_n(\mathbf{x})\subset K_{n-1}(\mathbf{y})$ then
$$
\mathbb{P}\left(\mathbf{x}\in \mathcal{E}_n|\mathbf{y}\in \mathcal{E}_{n-1}\right)=
\mathbb{P}\left(\mathbf{x}\in \widetilde{\mathcal{E}}_n|\mathbf{y}\in \widetilde{\mathcal{E}}_{n-1}\right)=p.
$$

The difference between $E$ and $\widetilde{E}$ follows from the  obvious fact:
\begin{fact}\label{v13} For all distinct $\mathbf{x},\mathbf{y}\in \mathcal{N}_n$  the events:
\begin{description}
  \item[(a)]
$K_n(\mathbf{x})\cap E$ and $K_n(\mathbf{y})\cap E$ are always independent.
  \item[(b)] $K_n(\mathbf{x})\cap \widetilde{E}$ and $K_n(\mathbf{y})\cap \widetilde{E}$ are  independent if and only if $x_i\ne y_i$ for all $i=1,\dots ,d$.
\end{description}
\end{fact}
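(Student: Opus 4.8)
The assertion is pure bookkeeping of \emph{which} families of retaining/discarding variables control each piece, so I would keep the argument short. Throughout I read the independence statements relative to the level-$n$ data, i.e.\ conditionally on $\mathcal{F}_n$ (and for $\widetilde E$, conditionally on the level-$n$ configurations $\overline{\mathcal{E}}^{(1)}_n,\dots,\overline{\mathcal{E}}^{(d)}_n$ of the one-dimensional factors) — this is the form in which the fact is used below; unconditionally the word ``independent'' should be read as ``governed by disjoint, hence independent, families of variables''. Part (a) is then exactly Remark \ref{u84}(b): for distinct $\mathbf{x},\mathbf{y}\in\mathcal{N}_n$ the cubes $K_n(\mathbf{x})$ and $K_n(\mathbf{y})$ have disjoint interiors, so the $\mathrm{Binomial}$/Bernoulli$(p)$ variables attached to the $M$-adic subcubes of $K_n(\mathbf{x})$ of levels $>n$ are disjoint from those attached to the subcubes of $K_n(\mathbf{y})$; since $E\cap K_n(\mathbf{x})$ is a measurable function of the former (given $\mathbf{x}\in\mathcal{E}_n$) and $E\cap K_n(\mathbf{y})$ of the latter, the two random sets are independent, and nothing about the relative position of $\mathbf{x},\mathbf{y}$ beyond $\mathbf{x}\neq\mathbf{y}$ was used.

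For the ``if'' direction of part (b): since $\widetilde E=\overline E^{(1)}\times\cdots\times\overline E^{(d)}$ and $K_n(\mathbf{x})=\overline K_n(x_1)\times\cdots\times\overline K_n(x_d)$, we have the product representation $\widetilde E\cap K_n(\mathbf{x})=\bigl(\overline E^{(1)}\cap\overline K_n(x_1)\bigr)\times\cdots\times\bigl(\overline E^{(d)}\cap\overline K_n(x_d)\bigr)$, using also \eqref{v12}. Thus $\widetilde E\cap K_n(\mathbf{x})$ is a function of the $d$ restricted one-dimensional percolations $\overline E^{(i)}\cap\overline K_n(x_i)$, and likewise $\widetilde E\cap K_n(\mathbf{y})$ of the $\overline E^{(i)}\cap\overline K_n(y_i)$. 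If $x_i\neq y_i$ for every $i$, then inside each factor $\overline E^{(i)}$ the intervals $\overline K_n(x_i)$ and $\overline K_n(y_i)$ are distinct level-$n$ intervals; applying the one-dimensional instance of part (a) inside each $\overline E^{(i)}$ and using the independence of $\overline E^{(1)},\dots,\overline E^{(d)}$, the $2d$ restricted percolations split into two independent blocks — one controlling $\widetilde E\cap K_n(\mathbf{x})$, the other $\widetilde E\cap K_n(\mathbf{y})$ — whence independence.

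For the ``only if'' direction: suppose $x_i=y_i$ for some $i$. Then $\overline E^{(i)}\cap\overline K_n(x_i)=\overline E^{(i)}\cap\overline K_n(y_i)$ is one and the same (rescaled copy of $\overline E^{(i)}$, hence non-degenerate) random set, and it occurs as a factor in the product representation of \emph{both} $\widetilde E\cap K_n(\mathbf{x})$ and $\widetilde E\cap K_n(\mathbf{y})$; so the two sets are non-trivial functions of a shared non-degenerate variable and cannot be independent. (Unconditionally one sees the same by noting that the survival events $\{\widetilde E\cap K_n(\mathbf{x})\neq\emptyset\}=\bigcap_j\{\overline E^{(j)}\cap\overline K_n(x_j)\neq\emptyset\}$ and its $\mathbf{y}$-analogue share the factor indexed by $i$ and are therefore strictly positively correlated.) I do not expect a genuine obstacle here; the only point needing a touch of care — which is why I flagged the conditioning at the outset — is pinning down the precise sense in which ``always independent'' in part (a) holds, and making the ``only if'' direction quantitative.
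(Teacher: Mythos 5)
Your argument is correct and is exactly the bookkeeping the paper has in mind: the paper states this as an ``obvious fact'' with no proof, and your decomposition into disjoint versus shared families of retention variables (together with the strict positive correlation of the survival events for the ``only if'' direction) is the intended justification. Your caveat about reading part (a) conditionally on the level-$n$ configuration is well taken --- unconditionally, restrictions to cubes with a common ancestor are positively correlated, so the conditional reading (as in Remark \ref{u84}(b)) is indeed the right one.
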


\subsection{Sections of codimension 1}

We consider hyperplanes
\[
H_t = \{\mathbf{y}\in \mathbb{R}^d: \mathbf{a}\cdot \mathbf{y}= t\}.
\]
That is $H_t(\mathbf{a})$ is the set of points $\mathbf{y}\in \mathbb{R}^d$ whose orthogonal projection to the line with direction vector $\mathbf{a}$ is equal to $t\cdot \mathbf{a}$ (since we assumed that $\mathbf{a}$ is a unit vector).


\begin{lemma} \label{lem:pi}
Given $p_1,\ldots,p_d>M^{-1}$ satisfying

\[
\prod_i p_i > M^{-d+1},
\]
we can find $q_1,\ldots,q_d$ such that

\[
\prod_i q_i > M^{-d+1},
\]

\begin{equation} \label{eqn:pi}
\prod_{i\neq j} q_i < M^{-d+2} \forall j,
\end{equation}
and

\[
M^{-1} < q_i \leq p_i \forall i.
\]
\begin{proof}
Without weakening the assumptions we can assume that $p_1=\min p_i$. Assume that

\[
\prod_{i=2}^d p_i \geq M^{-d+2}
\]
(otherwise we could choose $q_i=p_i$ for all $i$). There are two cases.

If $p_1 > M^{-1+1/d}$, we can choose any $M^{-1+1/d} < \delta <
M^{-1+1/(d-1)}$ and then set $q_i=\min(\delta, p_1)$ for all $i$.

In the opposite case, let $q_1=p_1$ and set (for $i\geq 2$ and $0\leq
t\leq 1$)
\[
q_i(t) = t p_i + (1-t) p_1.
\]

We have

\[
\prod_{i=2}^d q_i(0) < M^{-d+1} p_1^{-1}
\]
and

\[
\prod_{i=2}^d q_i(1) = \prod_{i=2}^d p_i > M^{-d+2}.
\]

Hence, we can find $t_0\in (0,1)$ such that

\[
M^{-d+1} p_1^{-1} < \prod_{i=2}^d q_i(t_0) < M^{-d+2}
\]
and we can just fix $q_i = q_i(t_0)$ for $i\geq 2$.
\end{proof}
\end{lemma}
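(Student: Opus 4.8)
The plan is to treat this as an elementary perturbation argument inside the box $\prod_i (M^{-1},p_i]$. First relabel so that $p_1=\min_i p_i$. Two cheap observations will drive everything. (i) Because each factor exceeds $M^{-1}$, any product of $d-1$ of the $q_i$ automatically exceeds $M^{-d+1}$, so a choice of $q_i$ meeting \eqref{eqn:pi} must have all its leave-one-out products in the narrow band $(M^{-d+1},M^{-d+2})$ — this is why such a $q$ has a chance to exist, and it tells us what to aim for. (ii) If we arrange that $q_1$ stays the smallest of the $q_i$, then for $j\ge 2$ we have $\prod_{i\neq j}q_i=\frac{q_1}{q_j}\prod_{i\ge 2}q_i\le \prod_{i\ge 2}q_i$, so the single inequality $\prod_{i\ge 2}q_i<M^{-d+2}$ forces all of \eqref{eqn:pi}. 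In particular, if $\prod_{i\ge 2}p_i<M^{-d+2}$ already, I just take $q_i=p_i$ for every $i$ and I am done. So from now on assume $\prod_{i\ge 2}p_i\ge M^{-d+2}$.

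Now split on the size of $p_1$. If $p_1>M^{-1+1/d}$, I make the $q_i$ all equal: fix any $\delta$ in the nonempty interval $(M^{-1+1/d},M^{-1+1/(d-1)})$ and set $q_i=\min(\delta,p_1)$ for all $i$. Since both $\delta$ and $p_1$ exceed $M^{-1+1/d}$ and $\min(\delta,p_1)\le\delta<M^{-1+1/(d-1)}$, the common value $q:=\min(\delta,p_1)$ lies in $(M^{-1+1/d},M^{-1+1/(d-1)})$; moreover $M^{-1}<q\le p_1\le p_i$. Then $\prod_i q_i=q^{\,d}>M^{-d+1}$ and $\prod_{i\neq j}q_i=q^{\,d-1}<M^{-d+2}$, which is exactly what is needed.

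The remaining case, $p_1\le M^{-1+1/d}$, is the only mildly delicate one: equalizing fails here because $p_1^{\,d}\le M^{-d+1}$ is too small. I keep $q_1=p_1$ fixed and slide the other coordinates toward $p_1$ along $q_i(t)=t p_i+(1-t)p_1$, $t\in[0,1]$, $i\ge 2$. Throughout, $q_i(t)$ is a convex combination of $p_1\le p_i$, so $q_i(t)\in[p_1,p_i]\subset(M^{-1},p_i]$ and $q_i(t)\ge p_1=q_1$, keeping $q_1$ smallest. The continuous function $t\mapsto\prod_{i\ge 2}q_i(t)$ equals $p_1^{\,d-1}$ at $t=0$ and $\prod_{i\ge 2}p_i$ at $t=1$; now $p_1\le M^{-1+1/d}$ gives $p_1^{\,d-1}\le M^{-d+1}p_1^{-1}$, $p_1>M^{-1}$ gives $M^{-d+1}p_1^{-1}<M^{-d+2}$, and the standing case assumption gives $\prod_{i\ge 2}p_i\ge M^{-d+2}$, so by the intermediate value theorem there is $t_0$ with $M^{-d+1}p_1^{-1}<\prod_{i\ge 2}q_i(t_0)<M^{-d+2}$. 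Setting $q_i=q_i(t_0)$ for $i\ge 2$, observation (ii) gives \eqref{eqn:pi}, and $\prod_i q_i=p_1\prod_{i\ge 2}q_i(t_0)>p_1\cdot M^{-d+1}p_1^{-1}=M^{-d+1}$, completing the proof.

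I do not expect a genuine obstacle here; it is bookkeeping. The hypothesis \eqref{cond1} is consumed in exactly one spot: verifying, in the last case, that the target band $(M^{-d+1}p_1^{-1},M^{-d+2})$ for $\prod_{i\ge 2}q_i$ is actually reached by the path, i.e.\ that it lies between the endpoint values $p_1^{\,d-1}$ and $\prod_{i\ge 2}p_i=p_1^{-1}\prod_i p_i$ — the band's lower end sits below $\prod_{i\ge 2}p_i$ precisely because $\prod_i p_i>M^{-d+1}$. The only real idea is recognizing that the naive uniform choice $q_1=\cdots=q_d$ works exactly when $p_1$ exceeds the threshold $M^{-1+1/d}$ (forcing the case split), and that below the threshold one should contract the non-minimal coordinates only as far as $p_1$, never down toward $M^{-1}$, since keeping every $q_j\ge q_1$ is what makes the $j\ge 2$ instances of \eqref{eqn:pi} free of charge.
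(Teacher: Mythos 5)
Your proof is correct and follows essentially the same route as the paper: the same reduction to $p_1=\min_i p_i$, the same trivial branch when $\prod_{i\ge 2}p_i<M^{-d+2}$, the same case split at $p_1=M^{-1+1/d}$ with the uniform choice $\min(\delta,p_1)$ in one case and the linear interpolation $q_i(t)=tp_i+(1-t)p_1$ with the intermediate value theorem in the other. You additionally make explicit a point the paper leaves implicit — that keeping $q_1$ minimal reduces all $d$ instances of \eqref{eqn:pi} to the single inequality $\prod_{i\ge 2}q_i<M^{-d+2}$ — which is a welcome clarification but not a different argument.
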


To prove the assertion of Theorem \ref{thm:prod} for probabilities $\{p_i\}$ it is enough to prove it for $\{q_i\}$ (increasing of probabilities is not going to decrease probability of the algebraic sum of percolation fractals containing an interval). Hence, we might freely assume that \eqref{eqn:pi} is satisfied for $\{p_i\}$.


The goal of this subsection is to prove the following Proposition:

\begin{proposition} \label{prop:prod}
Assume that
\begin{equation}\label{164}
  \prod_{i=1}^{d} p_i > M^{-d+1}\mbox{ and }
  \prod_{i\neq j} p_i < M^{-d+2}\quad \forall j.
\end{equation}
Under assumptions of Theorem \ref{thm:prod}, there is a constant $C$ such that for almost every nonempty realization of $\widetilde{E}$ there is $N$ such that for all $n>N$ for every $t$ if the hyperplane $H_t$ intersects cube $K_n(x_1,\ldots,x_d)$ then it intersects at most $Cn^{(1+\varepsilon)(d-2)}$ other cubes $K_n(y_1,\ldots,y_d)\subset E_n$ such that $x_i=y_i$ for some $i$.
\end{proposition}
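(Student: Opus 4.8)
The plan is to reduce the statement, separately for each of the $d$ possible shared coordinates, to a codimension-one section of a $(d-1)$-dimensional \emph{product} of one-dimensional percolations of dimension $<1$, and then to prove by induction on the number of factors that such sections meet only polynomially many cubes. For the reduction: it suffices to bound, for each fixed $j$, the number of cubes $K_n(\mathbf{y})\subset\widetilde{E}_n$ with $y_j=x_j$ that meet $H_t$, and to sum over $j$. Freezing and deleting the $j$-th coordinate identifies these cubes with the level-$n$ cubes of $\widetilde{E}^{(-j)}:=\prod_{i\neq j}\overline{E}^{(i)}$ that meet $H_t\cap\{x_j\in\overline{K}_n(x_j)\}$, which projects to a slab in $\mathbb{R}^{d-1}$ of width $a_jM^{-n}\leq M^{-n}$ with normal $(a_i)_{i\neq j}$, a fixed direction whose coordinates are all positive and bounded away from $0$. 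Exactly as in Fact \ref{u96}, such a slab is covered by a bounded number of parallel hyperplanes, so it is enough to bound the number of cubes of $\widetilde{E}_n^{(-j)}$ met by one such hyperplane. By Lemma \ref{lem:pi} we may assume \eqref{164}, so $\prod_{i\neq j}p_i<M^{-d+2}$, i.e.\ $\dim_{\rm H}\widetilde{E}^{(-j)}<1$.

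The key claim, proved by induction on $k$, is: if $\widetilde{F}=\overline{E}^{(1)}\times\cdots\times\overline{E}^{(k)}$ is a product of one-dimensional $M$-adic percolations with $\prod_i p_i<M^{-k+1}$ (equivalently $\dim_{\rm H}\widetilde{F}<1$), then for each fixed non-degenerate normal direction, almost surely conditioned on $\widetilde{F}\neq\emptyset$ there is $N$ such that for $n>N$ every hyperplane with that normal meets at most $C_k\, n^{(1+\varepsilon)(k-1)}$ level-$n$ cubes of $\widetilde{F}_n$. For $k=1$ this is the trivial fact that an $M^{-n}$-slab meets $O(1)$ level-$n$ intervals. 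For $k\geq2$, write the hyperplane as $H=\{\sum_i a_i x_i=t\}$; for each level-$n$ cube of $\widetilde{F}_n^{(-k)}=\prod_{i<k}\overline{E}_n^{(i)}$ the equation pins $y_k$ to $O(1)$ values, so the number of cubes of $\widetilde{F}_n$ met by $H$ is at most a constant times $\sum_w N_w$, where $w$ runs over the $M^{-n}$-mesh intervals in the $M^{-n}$-neighbourhood of $t-a_k\overline{E}_n^{(k)}$ and $N_w$ counts the cubes of $\widetilde{F}_n^{(-k)}$ whose image under $(x_i)_{i<k}\mapsto\sum_{i<k}a_ix_i$ lies in $w$. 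Since each $p_i>M^{-1}$ we have $\prod_{i<k}p_i<M^{-k+2}$, so the induction hypothesis (applied to the $(k-1)$-factor product $\widetilde F^{(-k)}$) bounds each $N_w$ by $O(1)\,C_{k-1}n^{(1+\varepsilon)(k-2)}$; and the number of occupied $w$ equals, up to $O(1)$, the number of level-$n$ mesh intervals of the intersection of the two \emph{independent} sets $\overline{E}^{(k)}$ and $\Sigma:=\sum_{i<k}a_i\overline{E}^{(i)}$, whose dimensions add up to $\frac{\log(M^k\prod_i p_i)}{\log M}<1$.

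It then remains to show this occupied-hyperplane count is $O(n)$, uniformly over the relevant family, and this I would do by a large-deviation argument in the style of Lemma \ref{u35} and Corollary \ref{u53}: the length potential $\mathrm{Leb}_1(\overline{E}^{(k)}_n\cap\Sigma_n)$ satisfies $\mathbb{E}[\,\cdot\mid\mathcal{F}_{n-1}]=(M^{k-2}\prod_i p_i)\cdot(\text{value at }n-1)$ with rescaled contraction factor $M^{k-1}\prod_i p_i<1$, and one applies Azuma--Hoeffding after splitting the sum over level-$(n-1)$ overlap pieces into a bounded (direction-dependent) number of sub-sums of genuinely independent summands, using the completely independent cylinders property (Remark \ref{u84}); the splitting costs only a direction-dependent constant in the exponent. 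Multiplying the two bounds gives $O(1)\,C_{k-1}n^{(1+\varepsilon)(k-2)}\cdot O(n)\leq C_k\, n^{(1+\varepsilon)(k-1)}$, because $1+(1+\varepsilon)(k-2)\leq(1+\varepsilon)(k-1)$; this is precisely where the $\varepsilon$-slack is spent, one power of $n$ per level of the induction. Passing from a single hyperplane to all $H_t$ is then done exactly as in Section~3: run the deviation estimates over an $M^{-Cn}$-dense net of the parameter $t$ (and, where needed, of the normal), make the failure probabilities summable as in \eqref{u97}--\eqref{u59}, apply Borel--Cantelli, and pass from the net to all hyperplanes by the monotonicity of Fact \ref{u49}; conditioning on nonemptiness is handled by restricting to $\Omega'$ and using $\mathbb{P}'$ as in the proof of Proposition \ref{u76}. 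Summing the reduction over $j=1,\dots,d$ yields the Proposition with $C=C(d,\mathbf{a})$.

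The main obstacle is the occupied-hyperplane estimate: the summands indexed by level-$(n-1)$ overlap pieces become correlated whenever two pieces share a coordinate of some factor of $\widetilde{F}^{(-k)}$, and for $k\geq3$ this correlation is no longer of bounded degree --- it is exactly the ``much more dependence for $d\geq3$'' flagged in the introduction. One must therefore either verify that for a fixed non-degenerate direction the correlation graph is still sparse enough for an Azuma-type bound to survive, or run the estimate by iteratively exposing the $k$ factors one at a time while keeping the accumulated polynomial loss under control; in the low-dimensional cases that feed the induction the correlation is genuinely of bounded degree, so the coloring trick above applies directly.
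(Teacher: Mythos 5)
Your high-level architecture agrees with the paper's: Proposition \ref{prop:prod} is reduced, by freezing the shared coordinate, to a statement about how many level-$n$ cubes of a $(d-1)$-fold product with $\prod_{i\neq j}p_i<M^{-d+2}$ a hyperplane can meet, and that statement is proved by induction on the number of factors with the bound $C_k n^{(1+\varepsilon)(k-1)}$ (this is the paper's Proposition \ref{prop:forgot}). However, the inductive step as you execute it has a genuine gap, and it sits exactly where you yourself flag ``the main obstacle.'' Your decomposition into (number of occupied fibers $w$) $\times$ (max cubes per fiber) introduces a new quantity --- the number of level-$n$ mesh intervals occupied by the intersection of $\overline{E}^{(k)}$ with the algebraic sum $\Sigma=\sum_{i<k}a_i\overline{E}^{(i)}$ --- that is controlled by nothing proved so far: $\Sigma_n$ is a projection with overlaps, so $\mathrm{Leb}_1$ of a union does not satisfy the clean recursion $\mathbb{E}[\,\cdot\mid\mathcal{F}_{n-1}]=c\cdot(\text{value at }n-1)$ you assert (that recursion holds for the multiplicity-counting pushforward, i.e.\ for the $(k-1)$-volume of $H_t\cap\widetilde F_n$ itself, at which point your factorization collapses back into the quantity the paper estimates directly). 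More importantly, the Azuma--Hoeffding step you invoke for this count still requires splitting the summands into independent subfamilies, and you correctly observe that for $k\geq 3$ the dependency degree is unbounded --- but you then leave the resolution as an ``either/or'' rather than supplying it. That resolution is the actual content of the proof.

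The paper's missing idea is this: the induction hypothesis at level $d-1$ is used not to count cubes per fiber, but to bound the \emph{degree of the dependency graph} of the increments $h_n(x_1,\ldots,x_d)(t)$. Two such variables are dependent only if the cubes share a coordinate, and a cube sharing a coordinate with $K_n(\mathbf{x})$ while meeting $H_t$ must meet one of boundedly many codimension-2 slices $H_{t'}\cap\{y_i=x_i\}$; the induction hypothesis bounds the number of such cubes by $O(n^{(1+\varepsilon)(d-2)})$. Lemma \ref{graph} then colors the dependency graph into $O(n^{(1+\varepsilon)(d-2)})$ classes of genuinely independent variables; Azuma--Hoeffding is applied to the \emph{volume} within each large class, the small classes are absorbed into an additive error $c_6 n^{(1+\varepsilon)(d-1)}M^{-n(d-1)}$, a net in $t$ plus the Lipschitz bound of Lemma \ref{lem:lip2} and Borel--Cantelli handle uniformity, the contraction $pM^{d-1}<1$ closes the recursion, and Lemma \ref{lem:lip} converts the volume bound into the cube count at the very end. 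Without this degree-bounding use of the induction hypothesis (or a worked-out substitute), your concentration step does not go through, so the proposal as written does not constitute a proof.
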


First we need some   auxiliary lemmas

\subsubsection{Auxiliary lemmas to the proof of Proposition \ref{prop:prod}}

\begin{lemma} \label{graph}
Let $G=(V,E)$ be a graph such that every vertex $v\in V$ has degree not greater than $n$. Then we can write $V=V_1\cup\ldots\cup V_{n+1}$ in such a way that no edge $e\in E$ connects two vertices from the same $V_i$.
\end{lemma}
\begin{proof}
Let $(V_1,E|V_1)$ be a maximal (in $V$) totally disconnected subgraph. That is, let $V_1\subset V$ such that for any $v_1, v_2\in V_1$, $v_1v_2 \notin E$ but if we added to $V_1$ any additional  point, this property would be lost. In particular, it means that any vertex $v\in V\setminus V_1$ is connected to some $v'\in V_1$ (otherwise $(V\cup \{v\},E|V\cup \{v\})$ would be totally disconnected). It implies that in the graph $(V\setminus V_1, E|V\setminus V_1)$ every vertex has degree not greater than $n-1$. The proof proceeds by induction.
\end{proof}

Yet another auxiliary lemma:

\begin{lemma} \label{lem:lip}
There exists $C>0$, depending only on $d$ and $\{a_i\}$, such that the following holds. Let $F$ be a union of some $M$-adic cubes of level $n$. Assume that the $d-1$-dimensional volume of $F\cap H_t$ is not greater than $Z M^{-n(d-1)}$ for all $t$. Then every $H_t$ intersects at most $CZ$ cubes from $F$.
\end{lemma}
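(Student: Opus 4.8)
The plan is to run a Fubini/coarea argument comparing the slice volumes of the individual cubes to the global hypothesis. Write $\delta=M^{-n}$ and $L=\delta\sum_{i=1}^{d}a_i$; recall from \eqref{v9} that $\mathbf{a}$ is a unit vector with $a_i>0$, so $L$ depends only on $d$ and $\{a_i\}$. Let $\mathcal{K}$ be the collection of level-$n$ cubes $K\subset F$ with $K\cap H_t\neq\emptyset$; the goal is to bound $\#\mathcal{K}$. For $K\in\mathcal{K}$ set $g_K(s):=\mathcal{L}{\rm eb}_{d-1}(K\cap H_s)$.

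The argument rests on two elementary observations. First, since $\{H_s\}_s$ are the level sets of $\mathbf{y}\mapsto\mathbf{a}\cdot\mathbf{y}$, which has unit gradient (this is where $\|\mathbf{a}\|=1$ is used), Fubini's theorem gives $\int_{\mathbb{R}}g_K(s)\,ds=\mathcal{L}{\rm eb}_d(K)=\delta^d$. Second, as $\mathbf{y}$ ranges over a level-$n$ cube $K$ the number $\mathbf{a}\cdot\mathbf{y}$ sweeps an interval of length exactly $L$; hence $g_K$ vanishes outside this interval, and since $K\cap H_t\neq\emptyset$ forces $t$ to belong to it, in fact $g_K$ vanishes outside $[t-L,t+L]$.

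I would then sum over $\mathcal{K}$. The cubes in $\mathcal{K}$ have pairwise disjoint interiors and are contained in $F$, so for every $s$ one has $\sum_{K\in\mathcal{K}}g_K(s)\le\mathcal{L}{\rm eb}_{d-1}(F\cap H_s)\le Z\delta^{d-1}$. Integrating this over $s\in[t-L,t+L]$ and invoking the two observations,
\[
\#\mathcal{K}\cdot\delta^{d}=\sum_{K\in\mathcal{K}}\int_{\mathbb{R}}g_K(s)\,ds=\int_{t-L}^{t+L}\sum_{K\in\mathcal{K}}g_K(s)\,ds\le 2L\,Z\,\delta^{d-1}=2\Big(\sum_{i=1}^{d}a_i\Big)Z\,\delta^{d},
\]
whence $\#\mathcal{K}\le 2\big(\sum_i a_i\big)Z$ and one may take $C:=2\sum_{i=1}^d a_i$.

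There is no genuinely hard step: the proof is essentially a one-line integral estimate. The only points requiring care are that the coarea factor equals $1$ — which is precisely why $\mathbf{a}$ was normalized to a unit vector — and that all the $g_K$ are supported in a common band of length $2L=O(\delta)$, so that integrating the uniform pointwise bound $\sum_{K}g_K\le Z\delta^{d-1}$ produces a gain of exactly one factor $\delta$ and closes the estimate, instead of diverging over an unbounded range of $s$.
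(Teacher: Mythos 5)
Your proof is correct, and it takes a genuinely different route from the paper's. The paper argues by contradiction: if some $H_t$ met too many cubes of $F$, then (by the volume hypothesis at $t$ itself) most of those cubes would have to meet $H_t$ in small $(d-1)$-volume, and each such cube would then have an intersection of definite volume $c\,M^{-n(d-1)}$ with one of the \emph{two} shifted hyperplanes $H_{t\pm\frac12 M^{-n}\sum a_i}$, violating the hypothesis at one of those two parameters. You replace this pigeonhole over two specific shifts by an average over the whole band of shifts: the coarea/Fubini identity $\int g_K(s)\,ds=M^{-nd}$ (valid with factor $1$ precisely because $\|\mathbf{a}\|=1$) converts the count $\#\mathcal{K}\cdot M^{-nd}$ into $\int_{t-L}^{t+L}\sum_K g_K(s)\,ds$, and the uniform hypothesis bounds the integrand by $ZM^{-n(d-1)}$ on a band of length $2L=2M^{-n}\sum_i a_i$. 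Both proofs exploit the same geometric fact---the mass $M^{-nd}$ of each cube is spread over a $t$-interval of width $M^{-n}\sum_i a_i$---but yours is direct rather than by contradiction, yields the explicit constant $C=2\sum_i a_i$, and sidesteps the slightly delicate claim that a cube meeting $H_t$ thinly must meet one of the two shifted hyperplanes thickly. The only points that need (and receive) care are the unit normalization of $\mathbf{a}$ and the fact that $a_i\neq 0$ for all $i$, which guarantees that $H_s$ meets cube faces in $(d-1)$-null sets so that $\sum_K g_K(s)\le\mathcal{L}{\rm eb}_{d-1}(F\cap H_s)$.
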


\begin{proof}
Assume that the assertion is not true. Then for every $\varepsilon >0$ one can find a set $F$ satisfying the assumptions such that for some $t$ $H_t$ intersects $\varepsilon^{-1} Z$ cubes from $F$. It implies that there are at least $\varepsilon^{-1} Z/2$ cubes in $F$ such that the $d-1$-dimensional volume of the intersection of $H_t$ with each of them is smaller than $2\varepsilon M^{-n(d-1)}$. Let us denote the family of those cubes by $G$.

Consider now the hyperplanes $L_{t+\frac 1 2 M^{-n}\sum l_i}$ and $L_{t-\frac 1 2 M^{-n}\sum l_i}$. Each cube from $G$ has large intersection (with $(d-1)$-dimensional volume at least $c(d) \cdot M^{-n(d-1)}$) with one of those hyperplanes (the intersection with the other might even be empty). Hence, the sum of $(d-1)$-dimensional volumes of intersecting $G$ with $L_{t+\frac 1 2 \sum l_i}$ and $L_{t-\frac 1 2 \sum l_i}$ is at least $\varepsilon^{-1} c(d)/2 Z M^{-n(d-1)}$, a contradiction.
\end{proof}

We need another useful observation:
\begin{lemma} \label{lem:lip2}
The $d-1$-dimensional volume of the intersection $H_t \cap E_n$ is lipschitz as a function of $t$, with the Lipschitz constant at most $c_5 M^n$.
\end{lemma}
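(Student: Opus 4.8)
The plan is to express $\phi(t):=L_n(H_t)=\mathcal{L}{\rm eb}_{d-1}(E_n\cap H_t)$ as a finite sum of single-cube section functions and to bound $\sum_Q|A_Q'(t)|$. Writing $E_n$ as a union of at most $M^{nd}$ level-$n$ cubes $Q=\mathbf z_Q+[0,M^{-n}]^d$, $\mathbf z_Q\in M^{-n}\{0,\dots,M^n-1\}^d$, we have $\phi=\sum_Q A_Q$ with $A_Q(t):=\mathcal{L}{\rm eb}_{d-1}(Q\cap H_t)$, and rescaling $Q$ to the unit cube gives
\[
A_Q(t)=M^{-n(d-1)}\,\psi\!\left(M^n(t-\mathbf a\cdot\mathbf z_Q)\right),\qquad
\psi(s):=\mathcal{L}{\rm eb}_{d-1}\bigl([0,1]^d\cap\{\mathbf a\cdot\mathbf x=s\}\bigr),
\]
the section function of the unit cube. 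I would use the standard fact that $\psi$ is continuous, vanishes outside $[0,\|\mathbf a\|_1]$, and is piecewise polynomial of degree at most $d-1$ with finitely many breakpoints (these occur only where the moving hyperplane passes a vertex of $[0,1]^d$); a continuous, compactly supported, piecewise-polynomial function is Lipschitz, so $\psi$ has a finite Lipschitz constant $L_{\mathbf a}$ depending only on $d$ and $\{a_i\}$. It follows that $\phi$ is itself continuous and piecewise polynomial with finitely many pieces, hence Lipschitz with constant $\sup|\phi'(t)|$ (the supremum taken over points of differentiability), and that $|A_Q'(t)|\le M^{-n(d-2)}L_{\mathbf a}$ while $A_Q'(t)=0$ unless $\mathbf a\cdot\mathbf z_Q\in[t-\|\mathbf a\|_1M^{-n},t]$.

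The crucial point — and the only place where any work is needed — is that although $\phi$ has exponentially many summands, at any fixed $t$ only $\mathcal N(t):=\#\{Q\subset E_n:\mathbf a\cdot\mathbf z_Q\in[t-\|\mathbf a\|_1M^{-n},t]\}$ of them contribute to $\phi'(t)$, and $\mathcal N(t)\le c_6 M^{n(d-1)}$. To obtain this bound I would note that the dilated cubes $M^n\mathbf z_Q+[0,1]^d$, for $Q$ as above, are pairwise disjoint, lie in $[0,M^n]^d$, and are all contained in the slab $\{\mathbf x\in[0,M^n]^d:\mathbf a\cdot\mathbf x\in[M^nt-\|\mathbf a\|_1,M^nt+\|\mathbf a\|_1]\}$, whose $d$-volume, by Fubini in the direction of the unit vector $\mathbf a$, is at most $2\|\mathbf a\|_1\cdot\sup_\tau\mathcal{L}{\rm eb}_{d-1}([0,M^n]^d\cap H_\tau)=2\|\mathbf a\|_1\|\psi\|_\infty M^{n(d-1)}=:c_6 M^{n(d-1)}$. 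Therefore
\[
|\phi'(t)|\le\mathcal N(t)\cdot M^{-n(d-2)}L_{\mathbf a}\le c_6L_{\mathbf a}\,M^{n},
\]
so $\phi$ is Lipschitz with constant $c_5M^n$, where $c_5:=c_6L_{\mathbf a}$ depends only on $d$ and $\{a_i\}$.

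I expect no serious obstacle here: the naive estimate — summing the Lipschitz constants of all $\le M^{nd}$ summands $A_Q$, each bounded by $M^{-n(d-2)}L_{\mathbf a}$ — would only give $\le M^{2n}L_{\mathbf a}$, and the required saving of the extra factor $M^n$ is exactly what the slab count provides. The remaining ingredients are all routine: the scaling identity for $A_Q$; the continuity and piecewise-polynomiality of $\psi$ (a one-line computation for $d=2$, where $\psi$ is piecewise linear with slopes $0$ and $\pm(a_1a_2)^{-1}$, and the classical description of sections of a polytope — or the $C^{d-2}$-smoothness of cube sections in the level — for $d\ge3$); and the fact that a continuous, piecewise-polynomial function equals a Lipschitz function on any compact interval with constant the supremum of its derivative over points of differentiability.
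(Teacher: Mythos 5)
Your proof is correct and follows the same route as the paper's (one-line) argument: bound the number of level-$n$ cubes whose section function is active at a given $t$ by $cM^{n(d-1)}$, multiply by the per-cube Lipschitz constant $cM^{-n(d-2)}$ obtained from rescaling the unit-cube section function $\psi$, and conclude $|\phi'(t)|\le c_5M^n$. The extra details you supply (the scaling identity for $A_Q$, the Lipschitz property of $\psi$ via piecewise polynomiality, and the slab-volume count) are exactly the steps the paper leaves implicit, and they all check out.
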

\begin{proof}
There are at most $cM^{n(d-1)}$ cubes $H_t$ can intersect and the volume of intersection of $H_t$ with each of them is Lipschitz with constant $cM^{-n(d-2)}$.
\end{proof}

To prove Proposition \ref{prop:prod}, it is enough to prove the following result:

\begin{proposition} \label{prop:forgot}
Assume that

\[
\prod_{i=1}^d p_i < M^{-d+1}.
\]
Then there is a constant $\tilde{C}$ such that for almost every nonempty realization of $\tilde{E}$ such that for every $t$ the hyperplane $H_t$ intersects at most $\tilde{C}n^{(1+\varepsilon)(d-1)}$ cubes $K_n(x_1,\ldots,x_d)\subset E_n$.
\end{proposition}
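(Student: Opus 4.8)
The plan is to prove Proposition \ref{prop:forgot} by induction on $d$, for each fixed $\varepsilon>0$ separately, by an argument that runs parallel to the proof of Theorem \ref{u36} but in which the colouring Lemma \ref{graph} is used to repair the loss of independence recorded in Fact \ref{v13}(b). By Lemma \ref{lem:lip} it is enough to control the slice volume, i.e.\ to show that for almost every nonempty $\widetilde E$ there is an $N$ so that for all $n>N$ and all $t$,
\[
\vol\bigl(H_t\cap\widetilde E_n\bigr)\ \le\ \tilde C\,n^{(1+\varepsilon)(d-1)}\,M^{-n(d-1)} .
\]
For the base case $d=2$ this is, up to the harmless relabelling of directions, the content of Theorem \ref{u36} combined with Lemma \ref{lem:lip}; so assume the Proposition in dimension $d-1$ and prove it in dimension $d$.

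For the inductive step, write $\vol(H_t\cap\widetilde E_n)=\sum_{\mathbf y}\vol(H_t\cap\widetilde E_n\cap K_{n-1}(\mathbf y))$, the sum being over those level-$(n-1)$ cubes $K_{n-1}(\mathbf y)\subset\widetilde E_{n-1}$ that meet $H_t$. On the graph whose vertices are these cubes, put an edge between $\mathbf y$ and $\mathbf y'$ whenever $y_i=y_i'$ for some coordinate $i$. The cubes of $\widetilde E_{n-1}$ meeting $H_t$ with a prescribed $i$-th coordinate lie in a codimension-one slab inside which $\widetilde E_{n-1}$ is the $(d-1)$-dimensional product of $\{\overline E^{(j)}\}_{j\ne i}$ and $H_t$ restricts to a hyperplane; since $p_i>M^{-1}$ by \eqref{v8} we have $\prod_{j\ne i}p_j=\bigl(\prod_j p_j\bigr)/p_i<M^{-d+1}/M^{-1}=M^{-d+2}$, so the inductive hypothesis applied to that $(d-1)$-dimensional product bounds the number of such cubes by $O\bigl(n^{(1+\varepsilon)(d-2)}\bigr)$ for $n$ past a realization-dependent threshold. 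Hence the graph has maximal degree $O\bigl(n^{(1+\varepsilon)(d-2)}\bigr)$, and Lemma \ref{graph} splits its vertices into $K_n=O\bigl(n^{(1+\varepsilon)(d-2)}\bigr)$ classes in each of which no two cubes share a coordinate. Within one such class the summands $\vol(H_t\cap\widetilde E_n\cap K_{n-1}(\mathbf y))$ are, conditionally on $\mathcal F_{n-1}$, genuinely independent bounded random variables with conditional mean $p\,\vol(H_t\cap K_{n-1}(\mathbf y))$, so Hoeffding's inequality (Theorem \ref{u40}), used exactly as in the proof of Lemma \ref{u35}, concentrates the class-sum around $p$ times the corresponding partial volume of the previous level. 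Summing over the $K_n$ classes, discretising $t$ by means of the Lipschitz estimate of Lemma \ref{lem:lip2} (so that only polynomially-times-exponentially-in-$M^{n}$ many values of $t$ need to be controlled), and applying Borel--Cantelli, one obtains a recursive inequality for $V_n:=M^{n(d-1)}\vol(H_t\cap\widetilde E_n)$ of the form $V_n\le pM^{d-1}(1+\varepsilon)V_{n-1}+(\text{error involving }K_n\text{ and }n)$ with $pM^{d-1}<1$; feeding this into the dichotomy argument of Lemma \ref{u57} shows that $V_n$ decays geometrically until it reaches, and then stays at, the polynomial level dictated by the error term, the estimates being arranged so that the extra factor $K_n=O(n^{(1+\varepsilon)(d-2)})$ produced by the colouring is exactly what promotes the exponent from $(1+\varepsilon)(d-2)$ to $(1+\varepsilon)(d-1)$. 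A last appeal to Lemma \ref{lem:lip} converts the volume bound into the stated bound on the number of cubes.

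The main difficulty is precisely this dependence: unlike in the two-dimensional situation of Theorem \ref{u36}, the contributions of distinct level-$(n-1)$ cubes to a slice are correlated as soon as the cubes agree in one coordinate, and the coordinate-disjoint colouring is what restores the independence needed for a concentration inequality. The remaining point is purely quantitative --- one has to carry the $O(n^{(1+\varepsilon)(d-2)})$ colour classes together with the $t$-discretisation through the level-by-level recursion without the polynomial overhead compounding out of control, and it is exactly the slack built into the exponent $(1+\varepsilon)(d-1)$ that makes room for this as the induction climbs from dimension $d-1$ to dimension $d$.
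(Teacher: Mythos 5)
Your argument is essentially the paper's own proof: induction on $d$, with the coordinate-sharing dependency graph coloured via Lemma \ref{graph} (its degree bounded by the induction hypothesis applied in the codimension-one slices $\{x_i=\mathrm{const}\}$), Hoeffding within each colour class after splitting classes into large and small, a Lipschitz discretisation of $t$ via Lemma \ref{lem:lip2}, Borel--Cantelli, and the contraction recursion $g_n\le(1+\varepsilon)pM^{d-1}g_{n-1}+O\bigl(n^{(1+\varepsilon)(d-1)}\bigr)$ closed off by Lemma \ref{lem:lip}. The only cosmetic difference is the base case: the paper starts at $d=1$, where $H_t$ is a single point and the claim is trivial, rather than at $d=2$ via Theorem \ref{u36} (which strictly speaking concerns the fully independent planar percolation rather than the product $\overline E^{(1)}\times\overline E^{(2)}$, though your own inductive step derives $d=2$ from $d=1$ in any case).
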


Indeed, to prove Proposition \ref{prop:prod} we fix some $i\in \{1,\ldots,d\}$ and intersect $E_n$ with $H_t\cap \{x_i={\rm const}\}$. However, $E_n \cap \{x_i={\rm const}\}$ is just $\overline{E}_n^{(1)} \times \ldots \times \overline{E}_n^{(i-1)} \times \overline{E}_n^{(i+1)} \times \ldots \times \overline{E}_n^{(d)}$ and $H_t \cap \{x_i={\rm const}\}$ is some hyperplane $H_{t'} \subset \mathbb{R}^{d-1}$. By the second part of \eqref{164}, the assumptions of Proposition \ref{prop:forgot} (applied to the cartesian product of all $E^{(j)}, j\neq i$) are satisfied and the assertion of Proposition \ref{prop:forgot} gives us the assertion of Proposition \ref{prop:prod} (note that $d$ in Proposition \ref{prop:forgot} corresponds to $d-1$ in Proposition \ref{prop:prod}).

\subsubsection{The proof of the Proposition \ref{prop:forgot}}
\begin{proof}[The proof of Proposition \ref{prop:forgot}]

The proof will be by induction. For $d=1$ the statement is obvious: the line $H_t$ is just a point, hence it can only intersect one square $K_n(x_1)$. Let us assume the assertion is true for $d-1$ and consider situation for $d$. As $\prod_1^n p_i < M^{-d+1}$, for any $i\in \{1,\ldots,d\}$ we have

\[
\prod_{j\neq i} p_j < M^{-d+2}.
\]
Hence, by the induction assumption for any $i\in \{1,\ldots,d\}$ for all $n$ the codimension 2 hypersurface $H_t\cap \{x_i={\rm const}\}$ intersects at most $\tilde{C}_{d-1}n^{(1+\varepsilon)(d-2)}$ cubes from $E_n$.

Thanks to Lemma \ref{lem:lip}, we only need to estimate (for big $n$) the $d-1$-dimensional volume of $H_t\cap E_n$ to be not greater than $C' M^{-n(d-1)} n^{(1+\varepsilon)(d-1)}$ for all $t$. Let us denote this random variable by $M^{-n(d-1)} \cdot g_n(t)$.
As it is a lipschitz function of $t$, it is enough to check that

\[
g_n(t) \leq C' n^{(1+\varepsilon)(d-1)}
\]
for sufficiently big $n$, not for all $t$ but only for some $M^{-nd}$-dense subset $T_n$. We will choose $\{T_n\}$ in such a way that $T_n \subset T_{n+1}$.

Consider $g_{n+1}(t)$ as a random variable, conditioned on $g_n(t)$. For every cube $K_n(x_1,\ldots,x_d)$ intersecting $H_t$, the volume of $H_t \cap K_n(x_1,\ldots,x_d)$ equals the sum of volumes of $H_t \cap K_{n+1}(y_1,\ldots,y_d)$ over all $K_{n+1}(y_1,\ldots,y_d)\subset K_n(x_1,\ldots,x_d)$ and each $K_{n+1}(y_1,\ldots,y_d)$ appears in $E_{n+1}$ with probability $p$. Hence,

\begin{multline} \label{eqn:g}
\mathbb E(\sum_{K_{n+1}(y_1,\ldots,y_d)\subset K_n(x_1,\ldots,x_d)} \vol (H_t \cap E_{n+1} \cap K_{n+1}(y_1,\ldots,y_d)))\\
= p \vol (H_t \cap E_n \cap  K_n(x_1,\ldots,x_d))
\end{multline}

If events happening in different $K_n(x_1,\ldots,x_d)$ were independent (as it is for fractal percolations), we would be able to estimate $g_{n+1}(t)$ like in the proof of Theorem \ref{u36} because the random variables

\begin{equation} \label{eqn:h}
h_n(x_1,\ldots,x_n)(t) = \sum_{K_{n+1}(y_1,\ldots,y_d)\subset K_n(x_1,\ldots,x_d)} \vol (H_t \cap E_{n+1} \cap K_{n+1}(y_1,\ldots,y_d))
\end{equation}
would be independent. That this is not the case in our situation is the main difficulty in the proof.

Given $n\in \mathbb N$ and $t\in T_n$, we will say the event $B(n,t)$ holds if the random variables $h_n(\cdot)(t)$ can be divided into at most $c_6n^{(1+\varepsilon)(d-2)}$ subfamilies $H_n^i(t)$ such that inside each $H_n^i(t)$ all the  $h_n(\cdot)(t)$ are independent. The constant $c_6$ will be chosen in the future. We will fix the choice of partition $\{H_n^i(t)\}$ (say, the first in a lexicographical order) if many are possible.

We denote

\begin{equation} \label{zi}
z_n^i(t) = \vol (H_t \cap \bigcup_{(x_1,\ldots,x_d)\in H_n^i(t)} K_n(x_1,\ldots,x_d))
\end{equation}
and

\begin{equation} \label{zii}
Z_n^i(t) = \sum_{(x_1,\ldots,x_d)\in H_n^i(t)} h_n(x_1,\ldots,x_n)(t)
\end{equation}

We will say $H_n^i(t)$ is {\it large} if

\[
z_n^i(t) > M^{-n(d-1)} n^{1+\varepsilon},
\]
otherwise it is {\it small}.

For any small $H_n^i(t)$ we can write

\[
Z_n^i(t) \leq z_n^i(t)
\]
(as $E_{n+1}\subset E_n$). For any large $H_n^i(t)$ it has at least $n^{1+\varepsilon}$ elements, and we can apply Azuma-Hoeffding inequality (compare Corollary \ref{u53}) to obtain

\begin{equation} \label{event}
\mathbb P(Z_n^i(t) < (1+\varepsilon) p z_n^i(t)) > 1-\gamma^{n^{1+\varepsilon}}
\end{equation}
for some $\gamma <1$. We say that event $C(n,t)$ holds if $Z_n^i(t) < (1+\varepsilon) p z_n^i(t)$ holds for all large $H_n^i(t)$. Our main interest is the event

\[
A(n,t) = C(n,t) \vee  B^c(n,t),
\]
where $B^c(n,t)$ stands for the complement of the event $B(n,t)$
We claim that, almost surely, there are only finitely many $(n,t)$ for which $A(n,t)$ fails (independently of the choice of $c_6$). Indeed, if $B(n,t)$ fails then $A(n,t)$ is automatically true and if $B(n,t)$ holds the number of $H_n^i(t)$ (large or not) is not greater than $c_6n^{(1+\varepsilon)(d-2)}$. Hence, \eqref{event} implies

\[
\sum_n \sum_{t\in T_n} (1-\mathbb P(A(n,t))) \leq \sum_n M^{nd} c_6n^{(1+\varepsilon)(d-2)} \gamma^{n^{1+\varepsilon}} < \infty
\]
and the claim follows.

Our second claim is that, almost surely, for $c_6$ large enough there are only finitely many $(n,t)$ for which $B(n,t)$ fails. This claim follows from the induction assumption.

Consider any $K_n(x_1,\ldots,x_d)$ and $K_n(y_1,\ldots,y_d)$ intersecting $H_t$. If $x_i\neq y_i$ for all $i$ then $h_n(x_1,\ldots,x_d)(t)$ and $h_n(y_1,\ldots,y_d)(t)$ are independent. We need to estimate for any $(x_1,\ldots,x_d)$ the maximal possible number of different $(y_1,\ldots,y_d)$ such that  $h_n(x_1,\ldots,x_d)(t)$ and $h_n(y_1,\ldots,y_d)(t)$ are not independent.

The induction assumption already gives us the estimation $\tilde{C}_{d-1}n^{(1+\varepsilon)(d-2)}$ for the number of cubes $K_n(y_1,\ldots,y_d)\subset E_n$ intersecting $H_t \cap \{y_i=x_i\}$. The cube can intersect both $H_t$ and $\{y_i=x_i\}$ but be disjoint with $H_t \cap \{y_i=x_i\}$. However, such a cube cannot be too far away from $H_t \cap \{y_i=x_i\}$. The following simple geometric argument shows that the estimation we seek is $c_7\tilde{C}_{d-1}n^{(1+\varepsilon)(d-2)}$, with $c_7$ depending on $\{a_j\}$.

As $K_n(x_1,\ldots,x_d)$ and $K_n(y_1,\ldots,y_d)$ intersect $H_t$, we have

\[
\sum a_j x_j, \sum a_j y_j \in (t-\frac 12 M^{-n} \sum a_j, t+\frac 12 M^{-n} \sum a_j).
\]
As $x_i = y_i$, we have

\[
\sum_{j\neq i} a_j x_j \approx \sum_{j\neq i} a_j y_j \approx t - a_i x_i.
\]
More precisely,

\[
\left| \sum_{j\neq i} a_j (x_j - y_j) \right| \leq M^{-n} \sum a_j.
\]

Hence, the number of such $(y_1,\ldots,y_d)$ is at most as big as the number of $d-1$-dimensional cubes

\[
K_n(y_1,\ldots,y_{i-1},y_{i+1},\ldots,y_d) \in E_n^{(1)} \times \ldots \times E_n^{(i-1)} \times E_n^{(i+1)} \times \ldots \times E_n^{(d)}
\]
intersecting one of at most $2(\sum a_j)/a_i$ hyperplanes

\[
\sum_{j\neq i} a_j y_j = \sum_{j\neq i} a_j x_j + k \min_j a_j,
\]
$k$ varying between $-(\sum a_j) /a_i$ and $(\sum a_j)/a_i$. By the induction assumption, this implies that there are at most $\tilde{C}_{d-1}c_7 n^{(1+\varepsilon)(d-2)}$ such $(y_1,\ldots,y_d)$ for

\[
c_7 = 2\frac {\sum a_j} {\min_j a_j}.
\]
Repeating this reasoning for all possible choices of $i$, for each cube $K_n(x_1, \ldots,x_d)\subset E_n$ intersecting $H_t$ there are at most $\tilde{C}_{d-1}dc_7 n^{(1+\varepsilon)(d-2)}$ cubes $K_n(y_1, \ldots,y_d)\subset E_n$ intersecting $H_t$ such that $x_i=y_i$ for some $i\in \{1,\ldots,d\}$.

We can now apply Lemma \ref{graph} to the dependency graph to divide all the events $h_n(x_1,\ldots,x_d)(t)$ into $\tilde{C}_{d-1}dc_7 n^{(1+\varepsilon)(d-2)}+1$ subfamilies of independent events. This ends the proof of the second claim.

From the two claims, the assertion follows easily. Let

\[
g_n = \sup_t g_n(t).
\]
Then, as soon as $n$ is big enough for $B(n,t)$ and $A(n,t)$ (and hence, $C(n,t)$ as well) always to happen, we will have

\begin{equation} \label{ind}
g_n \leq (1+\varepsilon) p M^{d-1} g_{n-1} + c_6n^{(1+\varepsilon)(d-1)} + c_5,
\end{equation}
where the first term comes from large $H_n^i(t)$, the second term comes from small $H_n^i(t)$, and the third term from lipschitz approximation (\eqref{event} only gives us $g_n(t)$ for $t\in T_n$). As $p M^{d-1}<1$, the inductive formula \eqref{ind} implies the assertion.
\end{proof}

\subsection{Proof of Theorem \ref{thm:prod}}

For $d=2$ Theorem \ref{thm:prod} was already proven in \cite{Dekking2008} but only for the case when the angle is $45^{\circ}$ . Our proof, however, is not similar to \cite{Dekking2008}, rather it has the same flavour as the proof of the main result of \cite{Rams}. We will begin the proof by strengthening the assumptions.

We will study the $d-1$-dimensional volume of $H_t\cap E_n$, we will denote this random variable by $M^{-n(d-1)} \cdot g_n(t)$. We will consider $g_{n+1}(t)$ as a random variable depending on $g_n(t)$. We have equation \eqref{eqn:g}. We define $h_n(x_1,\ldots,x_d)(t)$ by \eqref{eqn:h}. We are going to estimate the volume of $H_t\cap E_n$ from below, using Azuma-Hoeffding Theorem. The main difficulty in the proof is the dependence problem, which we will deal with like in the proof of Proposition \ref{prop:prod}.

The dependence problem is nonexisting for $d=2$. Indeed, any nonhorizontal and nonvertical line $\ell^\alpha$ will intersect only a bounded number of level $n$ squares in any given row (or column). Hence, in this case the argument given in \cite{Rams} works with slight modifications. In what follows, $d\ge 3$.

Given $n,t$, we divide random variables $h_n(\cdot)(t)$ into subfamilies $H_n^i(t)$ such that inside each $H_n^i(t)$ all the events $h_n(\cdot)(t)$ are independent. Like in the proof of Proposition \ref{prop:prod}, we say that the event $B(n,t)$ holds if we can have $c_8 n^{(1+\varepsilon)(d-3)}$ or less families $H_n^i(t)$.
We denote $z_n^i(t)$ and $Z_n^i(t)$ as in \eqref{zi},\eqref{zii}. We will say that $H_n^i(t)$ is {\it large} if

\[
z_n^i(t) > M^{-n(d-1)} n^{1+\varepsilon},
\]
otherwise it is {\it small}.

Conditioned on $E$ being nonempty, almost surely the $d$-dimensional volume of $E_n$ is $\approx cp^n$. Hence, almost surely we will be able to find infinitely many $N_j>N$ and corresponding $t_j$ such that

\[
g_{N_j}(t_j) > p^{N_j} M^{N_j(d-1)(1-\varepsilon)} > e^{\varepsilon N_j} + c_7.
\]
Without weakening the assumptions, $N_j>j$. For $n>N_j$ let $T_n^{(j)}$ be a $M^{-nd}$-dense subset of $I_j = (t_j-M^{-N_j d}, t_j+M^{-N_j d})$ satisfying $T_{n+1}^{(j)} \supset T_n^{(j)}$. In particular,

\[
g_{N_j}(t) >  e^{\varepsilon N_j}
\]
for all $t\in I_j$. For $t\in I_j$ for every small $H_n^i(t)$ we can write

\[
Z_n^i(t) \geq 0.
\]
For large $H_n^i(t)$ the Azuma-Hoeffding inequality gives

\begin{equation} \label{event2}
\mathbb P(Z_n^i(t) > (1-\varepsilon) p z_n^i(t)) > 1-\gamma^{n^{1+\varepsilon}}
\end{equation}
for some $\gamma <1$. We say the event $D(n,t)$ holds if $Z_n^i(t) > (1-\varepsilon) p z_n^i(t)$ for all large $H_n^i(t)$. We define

\[
E(n,t) = D(n,t) \vee  B^c(n,t).
\]
As

\[
\sup_j \sum_{n\geq N_j} \gamma^{n^{1+\varepsilon}} \cdot \sharp T_n^{(j)} < \infty,
\]
almost surely there exist infinitely many $j$'s for which the events $E(n,t)$ hold for all $n\geq N_j$ and $t\in T_n^{(j)}$. Because \eqref{eqn:pi} holds, we can apply Proposition \ref{prop:prod} to prove that, almost surely, events $B(n,t)$ hold for all sufficiently big $n$ for all $t\in T_n^{(j)}$ for all $N_j\leq n$ (like in the proof of the second claim in the main proof of Proposition \ref{prop:prod}). Hence, we can choose $j$ with arbitrarily big $N_j$ such that both $B(n,t)$ and $D(n,t)$ hold for all $n\ge N_j$ for all $t\in T_n^{(j)}$.

We denote

\[
g_n= \inf_{I_j} g_n(t).
\]
We have

\[
g_{n+1} \geq (1-\varepsilon) pM^{d-1} \left( g_n - c_8 n^{(1+\varepsilon)(d-3)} n^{1+\varepsilon}\right) - c_5,
\]
where the first term comes from the growth of the part of $g_n(t)$ contained in the big $H_n^i(t)$ and the second part is the lipschitz correction (the first part we only know for $t\in T_n^{(j)}$). If $N_j$ was big enough, we can prove inductively that

\[
g_n > e^{\varepsilon n} >0
\]
for all $n\geq N_j$. In particular, the algebraic sum of sets $E^{(i)}$ will contain $I_j$. We are done.

\begin{remark}
In the proofs of Proposition \ref{prop:prod} and Theorem \ref{thm:prod} we do not assume that $H_t$ are hyperplanes. They might be any codimension 1 surface sufficiently close to a hyperplane as for the lipschitz property (Lemma \ref{lem:lip2}) to hold. For example, the same argument can be used to show that the assertion of Theorem \ref{thm:prod} holds if we replace algebraic sum with the algebraic multiplication.
\end{remark}

\section{Distance sets for fractal percolations}

In this section we are going to present a related result on distance sets. A long standing conjecture due to Falconer \cite{Falconer1985} says that for any set in $\mathbb{R}^d$ with Hausdorff dimension greater than $d/2$, the distance set has positive length. We prove that for fractal percolation it is enough to require that the Hausdorff dimension is greater than $1/2$ for the distance set to contain an interval.

The proof is almost identical as the proof of Theorem \ref{thm:prod}, so we are only going to sketch it.

For a pair of sets $A,B\in \mathbb R^d$ we define their {\it distance set} as

\[
D(A,B) = \{|x-y|; x\in A, y\in B\}.
\]

Similarly,

\[
D(A) := D(A,A).
\]

\begin{theorem} \label{thm:dist}
Let $E_1, E_2$ be nonempty realizations of two fractal percolations in $\mathbb R^d$ with common scale $M$ and with probabilities $p_1, p_2$. Assume $p_1, p_2 > M^{-d}$ and

\[
p_1 p_2 > M^{-2d+1}.
\]
Then, almost surely $D(E_1, E_2)$ contains an interval.
\end{theorem}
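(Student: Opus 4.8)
The plan is to prove Theorem \ref{thm:dist} by imitating, almost line for line, the proof of Theorem \ref{thm:prod}, with the orthogonal projection onto a line replaced by the smooth scalar map $\Phi(x,y)=|x-y|$ on $\mathbb{R}^d\times\mathbb{R}^d=\mathbb{R}^{2d}$, whose fibres $H_t=\Phi^{-1}(t)$ are codimension-one surfaces. Indeed $D(E_1,E_2)=\Phi(E_1\times E_2)$, and by \eqref{155} (which also gives $\dim_{\rm B}E_i=\dim_{\rm H}E_i$, so that the product-dimension inequalities become an equality for any product with a fractal percolation factor) one has $\dim_{\rm H}(E_1\times E_2)=\dim_{\rm H}E_1+\dim_{\rm H}E_2=2d+\frac{\log(p_1p_2)}{\log M}$; hence the hypothesis $p_1p_2>M^{-2d+1}$ is exactly $\dim_{\rm H}(E_1\times E_2)>1$. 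As in the Remark following Theorem \ref{thm:prod}, this threshold is sharp: a typical $E_1\times E_2$ has $\mathcal{H}^1(E_1\times E_2)=0$ when its dimension equals $1$, and then so does its Lipschitz image $D(E_1,E_2)$.

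First I would reduce the parameters, exactly as in Lemma \ref{lem:pi}: an elementary computation shows that from $p_1,p_2>M^{-d}$ and $p_1p_2>M^{-2d+1}$ one can pass to $M^{-d}<q_i\le p_i$ with $q_1q_2>M^{-2d+1}$ but now $q_i<M^{-d+1}$ for $i=1,2$; since raising the probabilities only enlarges $D(E_1,E_2)$ we may, and do, assume $p_i<M^{-d+1}$, i.e. $0<\dim_{\rm H}E_i<1$. Next I would localise. Conditioning on $E_1,E_2\neq\emptyset$, almost surely each $E_i$ has positive dimension, hence at least two points, so there is a level $k_0$ and non-adjacent retained cubes $K_{k_0}(\mathbf{x}_0)\subset (E_1)_{k_0}$ and $K_{k_0}(\mathbf{y}_0)\subset (E_2)_{k_0}$, for which $\mathrm{dist}(K_{k_0}(\mathbf{x}_0),K_{k_0}(\mathbf{y}_0))>0$. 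On the product cube $Q=K_{k_0}(\mathbf{x}_0)\times K_{k_0}(\mathbf{y}_0)$ the gradient of $\Phi$ has constant norm $\sqrt{2}$ and the second derivatives of $\Phi$ are bounded, so after rescaling $Q$ to the unit cube the fibres of $\Phi$ become codimension-one surfaces which, at scale $M^{-n}$ with $n\gg k_0$, are arbitrarily close to hyperplanes; by the Remark following Theorem \ref{thm:prod} this is all the geometry that is used. By statistical self-similarity (Remark \ref{u84}) the part of $E_1\times E_2$ lying in $Q$ is, after rescaling, distributed as $E_1'\times E_2'$, a product of two fresh independent fractal percolations with the reduced parameters, so it suffices to prove that $\Phi(E_1'\times E_2')$ contains an interval.

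From here the argument is the proof of Theorem \ref{thm:prod} carried out in ambient dimension $2d$ with effective retention probability $p:=p_1p_2$: a level-$(n+1)$ subcube of $Q$ survives in $(E_1'\times E_2')_{n+1}$ with conditional probability $p_1p_2$, so $\mathbb{E}[\vol(H_t\cap (E_1'\times E_2')_{n+1}\cap K_n(\mathbf{z}))\mid\mathcal{F}_n]=p\,\vol(H_t\cap (E_1'\times E_2')_n\cap K_n(\mathbf{z}))$ for each level-$n$ cube $K_n(\mathbf{z})$, and the relevant expansion is governed by $pM^{2d-1}>1$, which is precisely $p_1p_2>M^{-2d+1}$. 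Writing $g_n(t)=M^{n(2d-1)}\vol(H_t\cap (E_1'\times E_2')_n)$ for the rescaled $(2d-1)$-dimensional volume, one checks as in Lemma \ref{lem:lip2} that $g_n$ is Lipschitz in $t$, and, since the $2d$-dimensional volume of $(E_1'\times E_2')_n$ is $\approx(p_1p_2)^n$ almost surely on nonemptiness, one finds infinitely many $N_j\to\infty$ and short intervals $I_j$ on which $g_{N_j}$ is already exponentially large. Along each $I_j$ one then propagates the lower bound $g_{n+1}(t)\ge (1-\varepsilon)\,pM^{2d-1}\bigl(g_n(t)-(\text{small-family correction})\bigr)-c_5$, applying the Azuma--Hoeffding estimate of Corollary \ref{u53} inside each family of mutually independent cubes, exactly as in the derivation of \eqref{ind} but with the inequality reversed; since $pM^{2d-1}>1$ this yields $g_n(t)\to\infty$ for every $t\in I_j$ once $N_j$ is large enough. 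Then $H_t\cap (E_1'\times E_2')_n\neq\emptyset$ for all $n$, hence $H_t\cap (E_1'\times E_2')\neq\emptyset$ by compactness, i.e. $I_j\subset\Phi(E_1'\times E_2')$; undoing the rescaling gives an interval inside $D(E_1,E_2)$.

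The step I expect to be the main obstacle is the one the authors already flag for Theorem \ref{thm:prod}: controlling the dependence among the cubes of $(E_1'\times E_2')_n$. By Fact \ref{v13}(a) two cubes $K_n(\mathbf{x},\mathbf{y})$ and $K_n(\mathbf{x}',\mathbf{y}')$ of the product are dependent only if $\mathbf{x}=\mathbf{x}'$ or $\mathbf{y}=\mathbf{y}'$. The cubes sharing the first block with a fixed $K_n(\mathbf{x},\mathbf{y})$ and met by $H_t$ are in bijection with the level-$n$ cubes of the $d$-dimensional percolation $E_2'$ met by the codimension-one surface $H_t\cap\{\text{first block}=\mathbf{x}\}$; since $\dim_{\rm H}E_2'<1$ after the reduction, the argument of Theorem \ref{u36} --- which goes through verbatim in $\mathbb{R}^d$ for a genuine $d$-dimensional fractal percolation and for surfaces close to hyperplanes in place of lines, and is the natural analogue of Corollary \ref{u66} --- bounds their number by $O(n)$ almost surely for all large $n$, and symmetrically for the second block. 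Thus the dependency graph of the random variables $h_n(\cdot)(t)$ defined by \eqref{eqn:h} has maximal degree $O(n)$, and Lemma \ref{graph} partitions these variables into $O(n)$ families of independent variables; this $O(n)$ replaces the factor $c_6 n^{(1+\varepsilon)(d-2)}$ of Theorem \ref{thm:prod}, and since it is still sub-exponential the Borel--Cantelli union bound over the $M^{-2dn}$-dense nets of the $I_j$ and the inductive growth of $g_n(t)$ above go through unchanged.
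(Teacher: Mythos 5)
Your proposal is correct and follows essentially the same route as the paper, whose own argument for Theorem \ref{thm:dist} is only a sketch reducing everything to the machinery of Theorem \ref{thm:prod}: localize to a product of two far-apart cubes, note that the fibres of $(x,y)\mapsto|x-y|$ are close enough to hyperplanes for Lemma \ref{lem:lip2} to survive, and rerun the Azuma--Hoeffding propagation with expansion factor $p_1p_2M^{2d-1}>1$; your observation that the two-block product has only $O(n)$ dependency degree (each block being a genuine $d$-dimensional percolation of dimension $<1$ after the Lemma \ref{lem:pi}-type reduction) is exactly what the authors mean by ``fewer dependencies.'' One detail to repair: in the localization the paper insists on choosing subcubes with $x_i\neq y_i$ for \emph{every} coordinate $i$, not merely $\mathrm{dist}\bigl(K_{k_0}(\mathbf{x}_0),K_{k_0}(\mathbf{y}_0)\bigr)>0$; non-adjacent cubes can still share a coordinate range, in which case the approximating hyperplane to $H_t$ has a vanishing coefficient, the constant $c_7=2\sum a_j/\min_j a_j$ in the dependency-degree count degenerates, and the transversality needed for the slice-counting (the analogue of excluding $a_i=0$ in Theorem \ref{thm:prod}) fails. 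This is trivially fixed by descending one more level and picking subcubes differing in all coordinates.
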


\begin{theorem} \label{thm:dist2}
Let $E$ be a nonempty realization of a fractal percolation in $\mathbb R^d$ for probability $p>M^{-d+1/2}$. Then, almost surely $D(E)$ contains an interval.
\end{theorem}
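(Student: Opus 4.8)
The point of departure is that $D(E)$ is the image of $E\times E\subset\mathbb{R}^{2d}$ under $\Phi(x,y)=|x-y|$, whose level sets $S_t=\{(x,y):|x-y|=t\}$ are codimension-one surfaces away from the diagonal, and that $\dim_{\rm H}(E\times E)=2\dim_{\rm H}(E)=2\tfrac{\log pM^d}{\log M}$, so the hypothesis $p>M^{-d+1/2}$ is exactly $\dim_{\rm H}(E\times E)>1$. The plan is to run the argument of Theorem \ref{thm:prod} (and Proposition \ref{prop:prod}) with the hyperplanes $H_t$ replaced by the surfaces $S_t$; by the Remark closing the previous section this is legitimate as long as one stays in a region where $S_t$ is uniformly close to a hyperplane, i.e.\ where $|x-y|$ is bounded away from $0$. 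I would arrange this, and at the same time neutralise the extra dependence caused by $E$ occurring in both factors, as follows. Conditioning on $E\ne\emptyset$, almost surely $E$ contains two points at some positive distance $2\delta_0$, hence for $n$ large there are two distinct cubes $Q_1,Q_2\in E_n$ with $\mathrm{dist}(Q_1,Q_2)\ge\delta_0$ and $\sqrt d\,M^{-n}<\delta_0/2$. By Remark \ref{u84}, $E\cap Q_1$ and $E\cap Q_2$ are \emph{independent} rescaled copies of $E$, say $\mathbf{c}_i+M^{-n}\widehat E_i$, so that
\[
D(E)\ \supseteq\ \bigl\{\,|\mathbf{v}+M^{-n}(\hat x-\hat y)|\ :\ \hat x\in\widehat E_1,\ \hat y\in\widehat E_2\,\bigr\},\qquad\mathbf{v}=\mathbf{c}_1-\mathbf{c}_2,\ |\mathbf{v}|\ge\delta_0,
\]
and on the domain $\hat x,\hat y\in[0,1]^d$ the function $\Psi(\hat x,\hat y)=|\mathbf{v}+M^{-n}(\hat x-\hat y)|$ is a submersion whose level sets are uniformly close to the hyperplane $\mathbf{v}^\perp$ (the curvature is $O(M^{-n}/|\mathbf{v}|)$). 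It thus suffices to show that the image of $\widehat E_1\times\widehat E_2$ under such a near-affine functional contains an interval, which is the content of Theorem \ref{thm:dist} and is proved exactly like Theorem \ref{thm:prod}.

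Two ingredients are needed before turning the crank. First, as in Lemma \ref{lem:pi} I would, via a coupling that puts a smaller percolation inside a larger one (which can only shrink $D(E)$), decrease $p$ so as to assume $M^{-d}<p<M^{-d+1}$, i.e.\ $\dim_{\rm H}(E)<1$ while still $p>M^{-d+1/2}$. Second, I need the $d$-dimensional analogue of Theorem \ref{u36}: if $F$ is a $d$-dimensional percolation with $\dim_{\rm H}(F)<1$, then almost surely, for all $n$ large and every codimension-one surface $H$ of bounded curvature, $\vol_{d-1}(F_n\cap H)\le C\,n\,M^{-n(d-1)}$, so that $H$ meets at most $C'n$ level-$n$ cubes of $F_n$. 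This is the Azuma--Hoeffding/Borel--Cantelli scheme of Section \ref{u45}; it is in fact simpler than Proposition \ref{prop:forgot} because a genuine $d$-dimensional percolation carries \emph{no} inter-cube dependence (Fact \ref{v13}(a)), and the net of admissible surfaces has cardinality polynomial in $M^n$, which the super-exponential tail in \eqref{u46} absorbs.

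The main argument now copies Theorem \ref{thm:prod}. Put $g_n(t)=M^{n(2d-1)}\vol_{2d-1}\bigl((\widehat E_1\times\widehat E_2)_n\cap\{\Psi=t\}\bigr)$; then $\mathbb{E}[g_{n+1}(t)\mid\mathcal{F}_n]=p^2M^{2d-1}g_n(t)$, and $p^2M^{2d-1}>1$ precisely by the hypothesis on $p$. Since $\vol\bigl((\widehat E_1\times\widehat E_2)_n\bigr)\asymp p^{2n}$ on non-extinction, one finds levels $N_j\to\infty$ and values $t_j$ with $g_{N_j}(t_j)$ exponentially large, fixes intervals $I_j\ni t_j$ and nested $M^{-nd}$-dense nets $T_n^{(j)}\subset I_j$, and decomposes $g_{n+1}(t)$ into the contributions $h_n(a,b)(t)$ of the children of each level-$n$ cube $(a,b)$. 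Two such contributions are independent unless $a=a'$ or $b=b'$ (here the independence of $\widehat E_1,\widehat E_2$ is what removes all other dependence), and for fixed $(a,b)$ the number of bad partners is at most $C'n$ by the single-factor estimate above applied to $\widehat E_2$ (resp.\ $\widehat E_1$) sliced by the near-hyperplane $\{\hat y:\Psi(a,\hat y)=t\}$. Lemma \ref{graph} then partitions the cubes into $\le C'n+1$ families of mutually independent events; Corollary \ref{u53} on the large families, together with a Lipschitz estimate (the analogue of Lemma \ref{lem:lip2}) used to pass from $T_n^{(j)}$ to all of $I_j$, yields $\inf_{t\in I_j}g_{n+1}(t)\ge(1-\varepsilon)p^2M^{2d-1}\bigl(\inf_{t\in I_j}g_n(t)-\mathrm{poly}(n)\bigr)-\mathrm{poly}(n)$, which for $N_j$ large forces $\inf_{t\in I_j}g_n(t)>e^{\varepsilon n}$ for all $n\ge N_j$. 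Hence $\{\Psi=t\}\cap(\widehat E_1\times\widehat E_2)\ne\emptyset$ for every $t\in I_j$, so $D(E)$ contains an interval with positive probability; the promotion to ``almost surely, conditioned on $E\ne\emptyset$'' is then the usual Borel--Cantelli argument over the (independent) deep sub-copies of $E$, exactly as in the proof of Proposition \ref{u76}, using that $D(E)\supseteq M^{-k}D(E')$ for every retained level-$k$ cube with sub-copy $E'$.

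The step I expect to be the real obstacle is not the probabilistic core --- which is verbatim that of Theorem \ref{thm:prod} --- but the geometric bookkeeping that keeps all slicing surfaces non-degenerate: checking that the single-factor estimate of the second paragraph genuinely holds for curved surfaces with a uniform constant, and controlling the Lipschitz constant of $t\mapsto\vol_{2d-1}\bigl((\widehat E_1\times\widehat E_2)_n\cap\{\Psi=t\}\bigr)$ by $M^n$ times a polynomial in $n$. The reduction to two far-apart, independent sub-copies in the first paragraph is precisely what makes both points manageable: it replaces the curved level sets $\{|x-y|=t\}$ near a fixed non-zero vector by near-hyperplanes, and it removes the dependence of $E$ with itself.
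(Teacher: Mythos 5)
Your proposal follows essentially the same route as the paper's (sketched) proof: pass to two far-apart level-$n$ cubes whose intersections with $E$ are independent rescaled copies, observe that on their product the level sets of the distance function are uniformly close to hyperplanes, and then rerun the Azuma--Hoeffding/Borel--Cantelli machinery of Theorem \ref{thm:prod} with the (now much lighter) dependency bookkeeping. Your write-up in fact supplies more detail than the paper does --- the coupling reduction of $p$, the explicit identification of $p^2M^{2d-1}>1$ with the hypothesis, and the final upgrade to an almost sure statement --- and all of it is consistent with the intended argument.
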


\begin{proof}
Both theorems are proven in basically the same way. For Theorem \ref{thm:dist} almost surely we can find two cubes: $K_n(x_1,\ldots,x_d)$ with nonempty intersection with $E_1$ and $K_n(y_1,\ldots,y_d)$ with nonempty intersection with $E_2$. For Theorem \ref{thm:dist2} we find two distinct cubes with nonempty intersection with $E$. By going to subcubes, we can freely assume that $x_i\neq y_i$ for all $i$ and that the two cubes are in large distance relative to their size. We can then consider the cartesian product $(E_1 \cap K_n(x_1,\ldots, x_d)) \times (E_2 \cap K_n(y_1,\ldots,y_d)$ (or $(E \cap K_n(x_1,\ldots, x_d)) \times (E \cap K_n(y_1,\ldots,y_d)$) as product of two independent random constructions.

This product is similar to one constructed in section \ref{sec:prod}, but it has fewer dependencies. We can consider its intersections with surfaces

\[
H_t = \{(x,y); \rho(x,y)=t\}.
\]
Those surfaces are sufficiently close to hyperplanes that Lemma \ref{lem:lip2} still holds, though maybe with different constant. The proof of Theorems \ref{thm:dist}  and \ref{thm:dist2}, now reduces to the proof of Theorem \ref{thm:prod}.
\end{proof}

\bibliographystyle{plain}

\bibliography{biblo_5}

\end{document}